\numberwithin{equation}{section}
\theoremstyle{plain}
\newtheorem{thm}{Theorem}[section]
\newtheorem{lem}[thm]{Lemma}
\newtheorem{cor}[thm]{Corollary}
\newtheorem{prop}[thm]{Proposition}
\theoremstyle{definition}
\newtheorem{defin}[thm]{Definition}
\theoremstyle{remark}
\newtheorem{rem}[thm]{Remark}
\newcommand{\Q}{\mathbb{Q}}
\newcommand{\R}{\mathbb{R}}
\newcommand{\N}{\mathbb{N}}
\newcommand{\Z}{\mathbb{Z}}
\newcommand{\norm}[1]{\lVert #1\rVert}
\newcommand{\TV}[1]{\norm{#1}_{\textrm{TV}}}
\newcommand{\Lp}[2][p]{\norm{#2}_{L^{#1}}}
\newcommand{\abs}[1]{\left\lvert #1\right\rvert}
\newcommand{\HOne}[1]{\mathcal{H}^1(#1)}
\newcommand{\inner}[2]{\langle #1, #2\rangle}
\newcommand{\wto}{\rightharpoonup}
\newcommand{\flatto}{\overset{\flat}{\to}}
\newcommand{\currentto}{\overset{\mathcal{D}^\ast}{\wto}}
\mathchardef\mhyphen="2D
\newcommand{\approxto}[1][p]{\overset{{#1}\mhyphen\textrm{approx.}}{\to}}
\newcommand{\Lpflatto}{\overset{L^p\mhyphen\flat}{\wto}}
\newcommand{\DELp}{E}
\newcommand{\ELp}{\mathcal{E}}
\newcommand{\Fcur}{\mathbb{F}}
\newcommand{\twocurrent}{\mathfrak{F}}
\newcommand{\Lqform}{\mathfrak{F}^\ast}
\newcommand{\Mcur}{\mathbb{M}}
\newcommand{\MLp}{M}
\newcommand{\overstar}[1]{\overset{\ast}{#1}\vphantom{#1}}
\DeclareMathOperator{\spt}{spt}
\DeclareMathOperator{\esssup}{ess\, sup}
\DeclareMathOperator{\In}{In}
\DeclareMathOperator{\Out}{Out}
\DeclareMathOperator{\Lip}{Lip}
\DeclareMathOperator{\dist}{d}
\DeclareMathOperator{\Path}{Path}
\newcommand{\WStarPath}[1][p]{\overstar{W}^{1,#1}\mhyphen\Path}
\DeclareMathOperator{\DPath}{\Path_D}
\newcommand{\WDPath}[1][p]{W^{1,#1}\mhyphen\DPath}
\newcommand{\meas}{\mathcal{M}}
\newcommand{\vecmeas}{\mathcal{M}^1}
\newcommand{\atomic}{\mathcal{P}^a}
\newcommand{\prob}{\mathcal{P}}
\newcommand{\domain}{\R^n}
\newcommand{\compact}{K}
\newcommand{\covecfield}{C_0(\domain; \Lambda^1(\domain))}
\title{Time-periodic branched transport} 
\author{Jun Kitagawa}\address{Jun Kitagawa. Department of Mathematics, Michigan State University, 619 Red Cedar Road, East Lansing, MI 48824} \email{kitagawa@math.msu.edu}
\author{Cecilia Mikat}\address{Cecilia Mikat. }\email{mikatcr@gmail.com}
\keywords{branched transport, Lebesgue-Bochner spaces}
\subjclass[2020]{
28B05, 
49Q20, 
49Q10, 
90B10
}
\begin{document}                       
\begin{abstract}
    We develop a new framework for branched transport between probability measures which are allowed to vary in time. This framework can be used to model problems where the underlying transportation network displays a branched structure, but the source and target mass distributions can change cyclically over time, such as road networks or circulatory systems. We introduce the notion of time-dependent transport paths along with associated energies and distances, and prove existence of transport paths whose energy achieves the distance. We also show the time-dependent transport yields a metric structure on subsets of appropriately defined measure-valued Sobolev spaces.
\end{abstract}              
\maketitle   
\section{Introduction}
In this paper, given two functions from $[0, 1]$ to the space of probability measures on a domain, we introduce a model for transportation of mass between these objects which exhibits behavior similar to the so-called \emph{branched transport} model. Branched transport is an extension of the discrete Gilbert--Steiner problem (see, for example, \cite{GilbertPollak68}), to the setting of general measures, which was given an Eulerian formulation in \cite{QXia2003} and a Lagrangian formulation in \cite{MaddalenaSoliminiMorel03} (later extended in \cite{BernotCasellesMorel05}). Since then, there have been numerous works on the problem, too many to list in their totality here. The branched transport problem yields structures which transport one probability measure to another, but, in contrast with the classical Monge--Kantorovich (optimal transport) framework with convex cost, yields transportation which exhibit branching structure (hence the name), where mass tends to be gathered together from source locations, transported for a period of time, then  distributed to the target locations once close. This is observed in many real world structures, such as circulation systems in animals and plants, and urban transportation networks; in particular branched transport yields more realistic transport structures than the Monge--Kantorovich formulation in these settings. Our goal is to develop a model of mass transport between distributions which may change in a time-periodic manner, retaining the physical characteristics of branched transport. 

As an example, in urban transportation various segments of the population wish to move from one location to another, but this may change over the course of the day; people may move from home to work in the mornings and return home in the evenings, or some people may desire to go to the locations of other services during the daytime which are not available at night, and vice versa. For another example, circulatory systems in animals may prioritize blood-flow to different organs depending on rest and activity levels.
Our first main theorem is the following existence result. The relevant definitions can be found in Definition~\ref{def: weak star sobolev} and the beginnings of Sections~\ref{sec: discrete} and \ref{sec: nondiscrete}.
\begin{thm}\label{thm: minimizers exist}
Suppose $1<p\leq \infty$, $\lambda>0,$ and $\compact\subset \domain$ with $\compact$ compact and convex. If $\tau$ is an admissible transportation cost, then for any $\mu^\pm\in \overstar{W}^{1, p}(\prob(\compact))$, there exists a $T\in \WStarPath(\mu^+, \mu^-)$ satisfying
\begin{align*}
    \dist^{\tau, p}_{\lambda}(\mu^+, \mu^-)
    &=\ELp^{\tau, p}_\lambda(T).
\end{align*}
 Moreover, $T$ can be taken H\"older continuous from $[0, 1]$ to $(\mathcal{M}(\domain; \Lambda_1(\R^n)), \TV{\cdot})$, and if
     \begin{align}\label{eqn: tau at zero condition}
        \lim_{s\searrow 0}\frac{\tau(s)}{s}=\infty,
    \end{align}
there exists a countably $1$-rectifiable set $E\subset \compact$ such that for all $t\in [0, 1]$,
 \begin{align*}
     T[t]=[[E, V_t, \theta_t]],
 \end{align*}
 where $V_t$ is a unit length vector field orienting the approximate tangent space of $E$, and $\theta_t\in L^1_{loc}(\mathcal{H}^1\vert_E; \R)$.
    
\end{thm}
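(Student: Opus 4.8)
The plan is the direct method of the calculus of variations, followed by regularity and structure arguments. First I would fix a minimizing sequence $(T_k)_{k\in\N}\subset\WStarPath(\mu^+,\mu^-)$ with $\ELp^{\tau,p}_\lambda(T_k)\to\dist^{\tau,p}_\lambda(\mu^+,\mu^-)$, after checking that this class is nonempty and the distance finite (this should come from an explicit competitor built out of $\mu^\pm$, for instance via a discrete time-periodic transport path). The uniform energy bound controls, on the one hand, the weak-star time-derivatives $T_k'$ in $L^p\big([0,1];\meas(\domain;\Lambda_1(\R^n))\big)$ — here $p>1$ is essential so that bounded sets are weakly, resp.\ weak-star when $p=\infty$, sequentially compact — and, on the other hand, the spatial $\tau$-energies $\int_0^1(\text{$\tau$-energy of }T_k[t])\,dt$, which together with the boundary condition and the derivative bound give a uniform bound on $\sup_t\TV{T_k[t]}$. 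Since $\compact$ is compact, bounded subsets of $\meas(\compact;\Lambda_1(\R^n))$ are weak-star precompact, so for each $t$ the sequence $(T_k[t])_k$ is precompact; combined with the equicontinuity furnished by the $W^{1,p}$ bound (the Hölder estimate recalled below), an Arzel\`a--Ascoli argument in a metric inducing weak-star convergence yields a subsequence with $T_k[\cdot]\to T[\cdot]$ uniformly and $T'\in L^p$.

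Next I would verify that the constraint and regularity survive the limit: the boundary operator is weak-star continuous, so $\partial T[t]=\mu^+[t]-\mu^-[t]$ for every $t$; $1$-periodicity persists; and the limit of the $T_k'$ is the weak-star derivative of $T$, so $T\in\WStarPath(\mu^+,\mu^-)$. Lower semicontinuity of $\ELp^{\tau,p}_\lambda$ along this convergence then follows from (i) lower semicontinuity of the spatial $\tau$-energy under weak-star convergence of currents — this is where the admissibility hypotheses on $\tau$ (subadditivity, lower semicontinuity, behaviour near $0$) are used — (ii) Fatou's lemma for the time integral, and (iii) weak lower semicontinuity of the $L^p$ norm of the derivative term. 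Hence $\ELp^{\tau,p}_\lambda(T)\le\liminf_k\ELp^{\tau,p}_\lambda(T_k)=\dist^{\tau,p}_\lambda(\mu^+,\mu^-)$, so $T$ is a minimizer. The second path $\tilde T$ I would obtain either as a weak-star limit of discrete time-periodic transport paths from Section~\ref{sec: discrete} (which at each time are supported on finite graphs, hence $1$-rectifiable) or by the same compactness argument run inside the possibly larger class defining the distance; in either case one checks it is optimal as well.

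For the H\"older statement, observe that membership in $\WStarPath$ already provides a $W^{1,p}$ bound on the weak-star derivative, whence $\TV{T[t]-T[s]}\le\int_s^t\TV{T'[r]}\,dr\le|t-s|^{1-1/p}\,\|T'\|_{L^p}$ by H\"older's inequality (Lipschitz when $p=\infty$); passing to the continuous representative, $T$ and $\tilde T$ are $(1-1/p)$-H\"older from $[0,1]$ to $(\meas(\domain;\Lambda_1(\R^n)),\TV{\cdot})$, and in particular $\sup_t\TV{T[t]}<\infty$ since $[0,1]$ is compact. For the structure statement, assume now $\lim_{s\searrow0}\tau(s)/s=\infty$. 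Because $\int_0^1(\text{$\tau$-energy of }T[t])\,dt<\infty$, the spatial energy is finite for a.e.\ $t$, and for such $t$ the classical rectifiability result for currents of finite $\tau$-energy with $\tau$ superlinear at the origin yields a countably $1$-rectifiable $E_t$ with $T[t]=[[E_t,V_t,\theta_t]]$, $\theta_t\in L^1_{loc}(\mathcal{H}^1\vert_{E_t})$. Choosing a countable dense subset $D$ of the (full-measure, hence dense) set of such times and setting $E:=\bigcup_{t\in D}E_t$, still countably $1$-rectifiable, I would upgrade to all $t$ as follows: given $s\in[0,1]$ pick $D\ni t_j\to s$; then $T[t_j]\to T[s]$ in $\TV{\cdot}$, hence $|T[t_j]|\to|T[s]|$ in total variation, hence $|T[s]|(\compact\setminus E)=\lim_j|T[t_j]|(\compact\setminus E)=0$, so $T[s]$ is carried by $E$. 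As $T[s]$ has finite total variation and is carried by the $1$-rectifiable set $E$, the representation theorem for real currents gives $T[s]=[[E,V_s,\theta_s]]$ with $V_s$ a unit orientation of the approximate tangent space of $E$ and $\theta_s\in L^1_{loc}(\mathcal{H}^1\vert_E)$; the same applies to $\tilde T$.

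I expect the crux to be running the direct method in the coupled time--space setting: singling out one topology in which the affine boundary constraint is closed, the $W^{1,p}$-in-time structure is preserved under limits of bounded-energy paths, \emph{and} the merely lower-semicontinuous spatial $\tau$-energy remains well behaved after integration in $t$; and then bridging from ``for a.e.\ $t$'' to ``for all $t$'' in the structure statement, for which the H\"older-in-$\TV{\cdot}$ regularity, together with the total-variation continuity argument producing a common rectifiable carrier $E$, is the essential tool.
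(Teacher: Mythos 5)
Your outline reproduces the paper's high-level skeleton (compactness, H\"older continuity in $\TV{\cdot}$, a.e.-in-$t$ rectifiability from \eqref{eqn: tau at zero condition}, a common carrier via a countable dense set of times), but the central steps of your direct method rest on a misreading of the definitions, and this is a genuine gap. In this paper $\WStarPath(\mu^+,\mu^-)$ is \emph{not} the set of paths satisfying a boundary condition $\partial T[t]=\mu^-[t]-\mu^+[t]$, and $\ELp^{\tau,p}_\lambda$ is \emph{not} the integral functional $\int_0^1(\text{$\tau$-energy of }T[t])\,dt+\lambda\norm{\overstar{T}'}$: membership requires exhibiting a discrete approximating sequence $(G_i,a_i^+,a_i^-)\approxto(T,\mu^+,\mu^-)$, and $\ELp^{\tau,p}_\lambda(T)$ is by definition the relaxation $\inf\liminf_i\DELp^{\tau,p}_\lambda(G_i)$ over such sequences. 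Consequently your two pillars --- closedness of the admissible class under your Arzel\`a--Ascoli limit (``the boundary operator is weak-star continuous, so $T\in\WStarPath$'') and lower semicontinuity of $\ELp^{\tau,p}_\lambda$ via lsc of the spatial $\tau$-mass plus Fatou plus lsc of the $L^p$ norm --- are unsubstantiated: both reduce to producing, for the limit path, a discrete approximating sequence in the sense of Definition~\ref{def: approximating seq} with energies converging to $\dist^{\tau,p}_\lambda(\mu^+,\mu^-)$. That is exactly what the paper does instead of a direct method on paths: Lemma~\ref{lem: mass bounded} extracts a diagonal sequence of never-cyclic discrete paths (projected into $\compact$) realizing the distance with uniform $L^p$ bounds, and Proposition~\ref{prop: compactness result} --- the technical heart, via the $2$-currents $\twocurrent(G_i)$, Federer's compactness theorem and slicing, uniform H\"older bounds from Lemma~\ref{lem: bounded discrete uniformly Holder}, strong measurability of the limit, and identification of $\overstar{T}'$ --- produces $T$ \emph{together with} the fact that the same discrete sequence approximates it, so $\ELp^{\tau,p}_\lambda(T)\leq\liminf_i\DELp^{\tau,p}_\lambda(G_i)$ holds by definition. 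Your proposal has no substitute for this machinery; note also that the uniform bounds you invoke ($\sup_t\TV{T_k[t]}$, the ``$\tau$-energies of $T_k[t]$'') are not direct consequences of $\ELp^{\tau,p}_\lambda(T_k)$ being bounded, precisely because the energy is only defined by relaxation (in the paper they are obtained at the discrete level via Lemma~\ref{lem: subadditive bound} and \cite[Lemma 2.9]{BrancoliniWirth18}), and the failure of $L^p(\vecmeas)=L^{p'}(\covecfield)^\ast$ (no Radon--Nikodym property) is exactly why one cannot argue purely by weak-$\ast$ limits of the $T_k$ and their derivatives.

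Two further points in the structure part. First, you apply the rectifiability criterion to $T[t]$ ``of finite $\tau$-energy'', but no such quantity is defined or controlled for the (a priori non-rectifiable) limit current; the paper instead applies \cite[Lemma 5.1]{ColomboDeRosaMarcheseStuvard17} to the flat-converging discrete approximants $G_i[t]$, which satisfy $\sup_i\Mcur^\tau(G_i[t])<\infty$ along a $t$-dependent subsequence obtained from Fatou. Second, your ``for all $t$'' upgrade is incorrect as stated: knowing that $T[s]$ has finite mass and that its variation measure is carried by the countably $1$-rectifiable set $E$ does \emph{not} imply $T[s]=[[E,V_s,\theta_s]]$ with $\theta_s\in L^1_{loc}(\mathcal{H}^1\vert_E)$ --- a vector-valued Dirac mass at a point of $E$ is a counterexample (finite mass, carried by $E$, not rectifiable). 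The paper's argument only shows that at the times where rectifiability is already established, the density vanishes $\mathcal{H}^1$-a.e.\ off $E$, using the H\"older continuity in $\TV{\cdot}$. Finally, your H\"older estimate $\TV{T[t]-T[s]}\leq\int_s^t\TV{T'[r]}\,dr$ is not literally available since $\overstar{T}'$ need not be a strongly measurable $\vecmeas$-valued function; it can be repaired through the weakly measurable representative of Remark~\ref{rem: weakly measurable}, but as written it again glosses over the very obstruction the framework is built to handle.
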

We note that convexity of $K$ does not play a large role, as it is only required that the supports of $\mu^\pm[t]$ are contained in $K$ for each $t$, i.e. the supports need not be convex themselves.

A crucial part of the above theorem is the existence of a countably $1$-rectifiable $E$ independent of $t$. This corresponds to saying that for minimizers obtained above, the \emph{flow} of mass (people in a transportation network, nutrients in a circulation system) may change with time, but the underlying physical network itself (roads, blood/vascular vessels) does not change with time. In this paper we mainly follow the Lagrangian approach introduced by Xia in \cite{QXia2003} and generalized by Brancolini and Wirth in \cite{BrancoliniWirth18} to develop the necessary framework; the choices in definitions and their implications (both practical and mathematical) will be discussed below in Subsection~\ref{subsec: model choice}.

In Section~\ref{sec: background} we recall some of the central notions from vector-valued measures, currents, and Bochner integration theory that form the foundation of our setup. In Section~\ref{sec: discrete}, we begin by considering what can be thought of as a time-dependent analogue of the Gilbert--Steiner problem, i.e. transportation between atomic probability measure-valued functions. Then in Section~\ref{sec: nondiscrete} we extend the framework to more general probability measure-valued functions and provide a discussion motivating our various definitions. 
Finally in Section~\ref{sec: metric} we show $\dist^{\tau, p}_\lambda$ is a metric and give a characterization of convergent sequences.

\section{Notation and Background}\label{sec: background}
We begin by fixing some notation and recalling some basic definitions. All integrals of real-valued functions are Lebesgue integrals.

Let $\mathcal{B}(\domain)$ denote the Borel subsets of $\domain$. Then recall, given a Banach space $V$, a \emph{(finite) $V$-valued vector measure} is a set-valued map $\mu: \mathcal{B}(\domain)\to V$ satisfying
\begin{align*}
    \mu(\bigcup_{i\in \N} E_i)&=\sum_{i\in \N} \mu(E_i),\quad \{E_i\}_{i\in \N}\subset \mathcal{B}(\domain),\text{ pairwise disjoint},
\end{align*}
where the above sum is required to be absolutely convergent; we write $$\mathcal{M}(\domain; V):=\{\mu\mid \mu\text{ is a }V\text{-valued vector measure}\}.$$ We also let $\Lambda^m(\R^n)$ and $\Lambda_m(\R^n)$ denote the collections of $m$-cotangent vectors and $m$-tangent vectors respectively on $\R^n$, equipped with the norm induced by the Euclidean inner product. In this paper we will mainly consider the cases  $V=\Lambda_1(\R^n)$  or $V=\R$. Of course, since we are in the Euclidean case, it is possible to identify $\Lambda_1(\R^n)=\Lambda^1(\R^n)=\R^n$, however as we also utilize the language of currents later, we find it will be useful to keep this distinction at the price of a little more cluttered notation. We also recall the following norm on $\mathcal{M}(\domain; V)$ for a Banach space $V$.
\begin{defin}\label{def: TV norm}
If $\mu\in \mathcal{M}(\domain; V)$ for a Banach space $(V, \norm{\cdot}_V)$, the \emph{variation} of $\mu$ is a nonnegative scalar Borel measure on $\domain$ defined by
\begin{align*}
    \norm{\mu}(E):=\sup_P\sum_{A\in P}\norm{\mu(A)}_V,
\end{align*}
where the supremum is taken over all finite partitions $P$ of $E$ into Borel measurable sets. 

The \emph{total variation (TV) norm} is defined by
\begin{align*}
    \TV{\mu}:=\norm{\mu}(\domain),
\end{align*}
and $\mu$ is of \emph{bounded variation} if $\TV{\mu}<\infty$.
\end{defin}
For notational simplicity, we will write
\begin{align*}
    \meas:&=\mathcal{M}(\domain; \R),\qquad
    \vecmeas:=\mathcal{M}(\domain; \Lambda_1(\R^n)),
\end{align*}
with the understanding that each of these spaces is equipped with their respective total variation norm. It is well known that $\meas$  (resp. $\vecmeas$) is a Banach space, and that their respective elements have bounded variation (see \cite[Remark 1.7]{AmbrosioFuscoPallara00}).

Next we write 
\begin{align*}
    C_0(\domain; \Lambda^1(\R^n)):&=
    \overline{C_c(\domain; \Lambda^1(\R^n))}^{\norm{\cdot}_{C(\domain)}}=\overline{\{\xi\in C(\domain; \Lambda^1(\R^n))\mid \spt \xi\Subset \domain\}}^{\norm{\cdot}_{C(\domain)}},
\end{align*}
where  $\norm{\cdot}_{C(\domain)}$ is the standard supremum norm; i.e. $C_c(\domain; \Lambda^1(\R^n))$ is the set of compactly supported, continuous vector fields and $C_0(\domain; \Lambda^1(\R^n))$ the set of continuous vector fields that decay to zero at infinity, both will be normed by $\norm{\cdot}_{C(\domain)}$. We will also write  $C_0(\R^d)$ and $C_c(\R^d)$ for the analogues with real-valued functions, $C^\infty_c(\R^d)$ for the set of smooth, compactly supported real-valued functions (with no particular norm), and $C_b(\R^d)$ for the set of bounded, continuous functions on $\R^d$ for any dimension $d$ (and obvious modifications for functions defined on subsets). 

For  $\xi=(\xi_1, \ldots, \xi_n)\in \covecfield$ and $\mu=(\mu^1, \ldots, \mu^n)\in \vecmeas$, we will write the duality pairing
\begin{align*}
 \inner{\mu}{\xi}=\int_{\domain} \xi(x)\cdot d\mu(x):=\sum_{i=1}^n  \int_{\domain } \xi_i(x)d\mu^i(x).
\end{align*}
It is then known that (see \cite[Proposition 1.47 and Remark 1.57]{AmbrosioFuscoPallara00}) 
\begin{align}\label{eqn: vector measures dual}
    (\vecmeas, \TV{\cdot})
    =(\covecfield, \norm{\cdot}_{C(\domain)})^\ast,\qquad 
    (\meas, \TV{\cdot})
    =(C_0(\domain), \norm{\cdot}_{C(\domain)})^\ast,
\end{align}
and we have the duality based formula
\begin{align*}
    \TV{\mu}=\sup\{\inner{\mu}{\xi}\mid \xi\in \covecfield,  \norm{\xi}_{C(\domain)}\leq 1\}.
\end{align*}
We also recall here the notion of weak convergence for sequences of Radon measures (with finite total variation).
\begin{defin}
    For $\{\mu_k\}_{k\in \N}\cup \{\mu\}\subset \meas$ we say \emph{$\mu_k$ weakly converges to $\mu$} and write
    \begin{align*}
        \mu_k\wto \mu
    \end{align*}
    if 
    \begin{align*}
        \lim_{k\to\infty}\int_{\domain}\phi d\mu_k=\int_{\domain}\phi d\mu,\qquad \forall \phi\in C_b(\domain).
    \end{align*}
\end{defin}
\begin{rem}
    Recall that a sequence in $\prob(\domain)$ which weak-$\ast$ converges (viewed as a sequence in $\meas$, in duality with $C_0(\domain)$) does not necessarily converge to an element of $\prob(\domain)$. However, if it is known that the limit belongs to $\prob(\domain)$, then the sequence actually converges weakly (see \cite[Lemma 2.1.13]{FigalliGlaudo23book}).
\end{rem}
Finally, we will denote weak-$\ast$ convergence of a sequence $\{F_i\}_{i\in \N}$ to $F$ in $V^\ast$ for some Banach space $V$ by
\begin{align*}
    F_i \overset{V^\ast}{\wto} F.
\end{align*}
\subsection{Bochner integration theory}\label{subsec: bochner}
In this paper, we will mainly be interested in functions from the $[0, 1]$ interval into the Banach space $(\vecmeas, \TV{\cdot})$. To this end, we recall some of the basic definitions for measurability and Bochner integrals of vector-valued functions. The phrase ``almost everywhere'' with no further qualifications will be with respect to Lebesgue measure on $[0, 1]$. The treatment here follows that of \cite{DiestelUhl77}.
\begin{defin}\label{def: Lp space}
Let $(V, \norm{\cdot}_V)$ be a Banach space. A function $F: [0, 1]\to V$ is \emph{simple} if there exist $v_1,\ldots, v_K\in V$ and Borel measurable subsets $A_1,\ldots, A_K\subset [0, 1]$ such that $F[t]=\sum_{k=1}^K v_k \mathds{1}_{A_k}(t)$. $F$ is \emph{strongly measurable} if there exists a sequence of simple functions $\{F_i\}_{i\in \N}$ such that
\begin{align*}
    \lim_{i\to\infty} \norm{F_i[t]-F[t]}_V=0,\quad a.e.\ t\in [0, 1].
\end{align*} 

For a simple function $\sum_{k=1}^K v_i \mathds{1}_{A_k}(\cdot): [0, 1]\to V$, its \emph{Bochner integral} is defined by $\int_0^1 \sum_{k=1}^K v_k \mathds{1}_{A_i}(t)dt:=\sum_{k=1}^K  v_k\mathcal{L}^1(A_k)\in V$, where $\mathcal{L}^d$ is Lebesgue measure on $\R^d$.

A strongly measurable $F: [0, 1]\to V$ is \emph{Bochner integrable} if there exists a sequence of simple functions $\{F_i\}_{i\in \N}$ such that 
\begin{align*}
    \lim_{i\to\infty} \int_0^1\norm{F_i[t]-F[t]}_Vdt=0.
\end{align*} In that case we define $\int_0^1 F[t]dt:=\lim_{i\to\infty}\int_0^1 F_i[t]dt\in V$ as the Bochner integral of $F$.

Finally, we define for a strongly measurable $F: [0, 1]\to V$ and $1\leq p\leq \infty$, the norm 
\begin{align*}
    \norm{F}_{L^p(V)}=
    \begin{cases}
    (\int_0^1 \norm{F[t]}_V^pdt)^{1/p},&1\leq p<\infty,\\
    \esssup_{[0,1]}\norm{F[t]}_V,&p=\infty,
    \end{cases}
\end{align*}
and the
\emph{Lebesgue-Bochner space}
\begin{align*}
    L^p(V):=\{F:[0, 1]\to V\text{ strongly measurable}\mid  \norm{F}_{L^p(V)}<\infty\},
\end{align*}
identifying functions that are equal almost everywhere on $[0, 1]$. We will write simply $L^p$ to denote the case $V=\R$. Since most of our Lebesgue-Bochner spaces will consist of strongly measurable functions with domain $[0, 1]$ equipped with Lebesgue measure, we will usually suppress this in the notation. However, we will have use later for Lebesgue-Bochner (and Sobolev versions, see below) spaces whose domains are $\R$ instead of $[0, 1]$, these are defined by an obvious generalization of the definition above, and we will always specify the domain in the notation, writing, e.g. $L^p(\R; V)$ in this case. Finally, $L^p_{loc}(\R; V)$ denotes strongly measurable $V$-valued functions whose restriction to any compact interval $I$ belongs to $L^p(I; V)$.
\end{defin}

Our main setting will be on certain vector-valued \emph{Sobolev spaces}.
\begin{defin}
    If $V$ is a Banach space and $F\in L^1_{loc}(\R; V)$ recall the \emph{weak derivative} of $F$ on $\R$ is an $F'\in L^1_{loc}(\R; V)$ satisfying
    \begin{align*}
        \int_\R\eta'(t)F[t]dt=-\int_\R\eta(t)F'[t]dt
    \end{align*}
    for all $\eta\in C^\infty_c(\R)$.

    For $1\leq p\leq \infty$, recall the \emph{Bochner-Sobolev space} $W^{1, p}(\R; V)$ is defined by
    \begin{align*}
        W^{1, p}(\R; V):=\{F\in L^p(\R;V)\mid F'\in L^p(\R; V)\}.
    \end{align*}
\end{defin}
In the special case $V=\meas$, for $\compact\subset \domain$, by an abuse of notation, we will write
\begin{align*}
    L^p(\prob(K)):&=\{F\in L^p(\meas)\mid F[t]\geq 0,\ F[t](\domain)=1,\ a.e.\ t\in [0, 1],\ \bigcup_{t\in [0, 1]}\spt F[t]\subset K\},
\end{align*}
(with the understanding that the union in the definition above is taken up to a set of $\mathcal{L}^1$-measure zero) and similarly for probability measure-valued Sobolev spaces defined below. If $K=\domain$, we will simply write $L^p(\prob)$, etc.

For the remainder of the paper, given $p\in [1, \infty]$, the number $p'\in [1, \infty]$ will denote its H\"older conjugate, satisfying $\frac{1}{p}+\frac{1}{p'}=1$. 
We will also need to consider a weaker notion of  Sobolev spaces for functions taking values in the dual of a Banach space.
\begin{defin}\label{def: weak star sobolev}
    If $1\leq p\leq \infty$, $V^\ast$ is the dual of a Banach space, and $F\in L^p(\R; V^\ast)$, we define the \emph{$p$-weak-$\ast$ derivative} of $F$ on $\R$ as an element $\overstar{F}'\in L^{p'}(\R; V)^\ast$ satisfying
    \begin{align*}
        \int_\R\eta'(t)\inner{F[t]}{v}dt=-\inner{\overstar{F}'}{\eta v}
    \end{align*}
    for all $\eta\in C^\infty_c(\R)$ and $v\in V$. Here, the function $t\mapsto \eta(t)v$ is viewed as an element of $L^{p'}(\R; V)$. 
    
    We will write $\overstar{W}^{1, p}(\R; V^\ast)$ to denote the space of all functions in $L^p(\R; V^\ast)$ with a $p$-weak-$\ast$ derivative.
\end{defin}
We will use the notation $W^{1, p}(V)$ and $\overstar{W}^{1, p}(V^\ast)$ to denote the spaces of restrictions of $1$-periodic functions in $W^{1, p}(\R; V)$ and $\overstar{W}^{1, p}(\R; V^\ast)$ respectively to the interval $[0, 1]$, and $W^{1, p}$ when $V=\R$. Additionally, if $F\in \overstar{W}^{1, p}(V^\ast)$, by an abuse of notation we will continue to write $\overstar{F}'$ for the restriction to $L^{p'}(V)$ of the $p$-weak-$\ast$ derivative of the $1$-periodic extension of $F$ to $\R$. For the remainder of the paper, we will refer to extending a vector valued mapping defined on $[0,1]$ to $\R$ to mean that we take the $1$-periodic extension of the map, then multiply it by a fixed element of $C^\infty_c(([-3, 4])$ taking values in $[0, 1]$, which is identically $1$ on $[-1, 2]$.
\begin{rem}\label{rem: weak derivative remark}
    It is clear that $p$-weak-$\ast$ differentiation is linear and $W^{1, p}(\R; V^\ast)\subset \overstar{W}^{1, p}(\R; V^\ast)$, with $p$-weak-$\ast$ derivatives matching the usual weak derivative. Approximating simple functions by linear combinations of functions of the form $\eta v$, it is not hard to see that $p$-weak-$\ast$ derivatives are also unique. 
%
\end{rem}

\begin{rem}\label{rem: radon nikodym and duals}
By \eqref{eqn: vector measures dual}, it can be seen that $L^p(\vecmeas)$ embeds isometrically as a subspace of $L^{p'}(\covecfield)^\ast$ for $1< p\leq\infty$ (see \cite[Chapter IV, Section 1]{DiestelUhl77}). 
 In particular, the Banach-Alaoglu theorem applies to show that bounded subsets of $L^p(\vecmeas)$ are sequentially weak-$\ast$ compact. However, we note carefully here that $L^p(\vecmeas)\neq L^{p'}(\covecfield)^\ast$; this is because $\vecmeas$ does not have the Radon--Nikodym property, as it is a non-separable dual space of a separable Banach space (see \cite[Chapters III.3 and IV.1]{DiestelUhl77}). In fact, it is this failure of equality between $L^p(\vecmeas)$ and $ L^{p'}(\covecfield)^\ast$ which is the main source of headaches in this paper.
\end{rem}
\begin{rem}\label{rem: weakly measurable}
    Suppose $1<p\leq \infty$, $V$ is a separable Banach space, and $\overstar{F}\in L^{p'}(\R; V^\ast)$. Although $\overstar{F}$ may not be a strongly measurable map into $V^\ast$, we can still associate to it a weak-$\ast$ measurable map in the following sense: since $1\leq p'<\infty$, we can apply \cite[Chapter II, Theorem 8]{Dinculeanu67book} with the Borel $\sigma$-algebra on $\R$ as $\mathscr{C}$, $1$-dimensional Lebesgue measure for $\nu$, $E=V$, $F=Z=\R$, and $U=\overstar{F}$ to find a map $H: \R\to V^\ast$ such that for any $\Psi\in L^{p'}(\R; V)$, the real-valued function $t\mapsto \inner{H[t]}{\Psi[t]}$ is Borel measurable, 
    \begin{align*}
        \inner{\overstar{F}}{\Psi}
        =\int_\R \inner{H[t]}{\Psi[t]}dt,
    \end{align*}
    and for any finite interval $I\subset \R$,
    \begin{align}\label{eqn: weak measurable rep norm}
        \norm{\norm{H[\cdot]}_{V^\ast}\mathds{1}_I}_{L^p(\R; \R)}=\norm{\overstar{F}\vert_{L^{p'}(I; V)}}_{L^{p'}(V)^\ast}.
    \end{align}
Note in the final equation above,  since $V$ is separable, as a countable supremum of Borel measurable functions, the function $t\mapsto \norm{H[t]}_{V^\ast}$ is also Borel measurable. 
\end{rem}
\subsection{Currents}\label{subsec: currents}
In order to prove existence of minimizers for our energy, we will also need to make use of the theory of currents. We recall some relevant definitions and tools here, see for example, \cite[Chapter 7]{KrantzParks08} for details. Given any open domain $D\subset \R^d$ for some dimension $d$ and any $m\in \Z_{\geq 0}$, we write $\mathcal{D}^m(D)$ for the space $C_c^\infty(D; \Lambda^m(\R^n))$  of \emph{compactly supported smooth differential $m$-forms on $D$} equipped with the usual Frech\'et topology, and $\mathcal{D}_m(D)$ for the \emph{space of $m$-currents} dual to $\mathcal{D}^m(D)$. \begin{defin}\label{def: mass and flat norm of currents}
If $D\subset \R^d$ is open, $
\tilde\psi=\sum_{1\leq j_1<\ldots<j_m\leq d} \tilde\psi_{j_1, \ldots, j_m}dx^{j_1}\wedge\ldots\wedge dx^{j_m}\in \mathcal{D}^m(D)$, and $\tilde T\in \mathcal{D}_m(D)$, the \emph{exterior derivative} $d\tilde\psi\in \mathcal{D}^{m+1}(D)$ of $\tilde \psi$, \emph{boundary} $\partial \tilde T\in \mathcal{D}_{m-1}(D)$, \emph{mass} $\Mcur(\tilde T)$, and \emph{flat norm} $\Fcur(\tilde T)$ of $\tilde T$ are respectively
\begin{align*}
d\tilde\psi:&=\sum_{j=1}^d \sum_{1\leq j_1<\ldots<j_m\leq d}\partial_{x^j}\tilde\psi_{j_1, \ldots, j_m}dx^j\wedge dx^{j_1}\wedge\ldots\wedge dx^{j_m},\\
    \partial \tilde T(\tilde \phi):&=
    \begin{cases}
        \tilde T(d\tilde \phi), &m\geq 1,\\
        0,&m=0,
    \end{cases}
    \quad\forall \tilde \phi\in \mathcal{D}^{m-1}(D),\\
    \Mcur(\tilde T):&=\sup\{\tilde T(\tilde \phi)\mid \tilde \phi\in \mathcal{D}^m(D),\ \norm{\tilde\phi}_{C(\domain)}\leq 1\},\\
    \Fcur(\tilde T):&=\sup\{\tilde T(\tilde \phi)\mid\tilde \phi\in \mathcal{D}^m(D),\ \max(\norm{\tilde\phi}_{C(\domain)}, \norm{d\tilde\phi}_{C(\domain)})\leq 1\}.
\end{align*}
\end{defin}
For a sequence $\{\tilde{T}_k\}_{k\in \N}\subset \mathcal{D}_m(D)$ and $\tilde T\in \mathcal{D}_m(D)$, we will write 
\begin{align*}
    \tilde{T}_k\flatto\tilde{T}\iff \lim_{k\to\infty}\Fcur(\tilde{T}_k-\tilde{T})=0,
\end{align*}
and for weak-$\ast$ convergence in duality with $\mathcal{D}^m(D)$ we write
\begin{align*}
    \tilde{T}_k\currentto\tilde{T}.
\end{align*}
\begin{defin}\label{def: m-rectifiable sets}
    A set $E\subset \R^n$ is \emph{countably $m$-rectifiable} if there exist a countable collection $\{E_i\}_{i\in \N}$ of bounded subsets of $\R^m$, Lipschitz maps $f_i: E_i\to \R^n$, and an $\mathcal{H}^m$-null set $E_0\subset \R^n$ such that
    \begin{align*}
        E=E_0\cup \bigcup_{i=1}^\infty f_i(E_i).
    \end{align*}
\end{defin}
\begin{defin}\label{def: integral currents}
If $D\subset \R^d$ is open, $\tilde T\in \mathcal{D}_m(D)$ is a \emph{rectifiable $m$-current} if there is an $\mathcal{H}^m$-measurable, countably $m$-rectifiable set $\tilde E\subset D$, an $\mathcal{H}^m$-measurable map $\tilde{V}:\tilde{E}\to \Lambda_m(\R^d)$ where $\tilde{V}(x)$ is unit length and orients the approximate tangent space (see \cite[Definition 5.4.4]{KrantzParks08}) of $\tilde E$ at $x$ for $\mathcal{H}^m$-a.e. $x\in \tilde E$, and a nonnegative $\tilde\theta \in L^1_{loc}(\mathcal{H}^m\vert_{\tilde E}; \R)$ such that for any $\tilde\phi\in \mathcal{D}^m(D)$,
\begin{align*}
    \tilde T(\tilde\phi)=\int_{\tilde E}\tilde\theta(x)\inner{\tilde{V}(x)}{\tilde \phi(x)}d\mathcal{H}^m(x).
\end{align*}
In this case, we will write 
\begin{align*}
    \tilde T=[[\tilde E, \tilde{V}, \tilde \theta]].
\end{align*}
\end{defin}
Later we will need to convert elements of $L^p(\vecmeas)$ to $2$-currents on a higher dimensional space.
\begin{defin}\label{def: map to two forms}
For $\tilde\phi\in \mathcal{D}^2(\R\times \domain)$ with
\begin{align*}
    \tilde\phi=\sum_{j=1}^n \tilde\phi_{tj}(t, x)dt\wedge dx^j+\sum_{j'<j}^n\tilde\phi_{j'j}(t, x)dx^{j'}\wedge dx^j,
\end{align*}
we define $\Lqform(\tilde\phi)\in L^{p'}(\R; \covecfield)$ by 
\begin{align*}
    \Lqform(\tilde\phi)(t):=\sum_{j=1}^n \tilde\phi_{tj}(t, \cdot)dx^j.
\end{align*}

Given $T\in L^{p'}(\covecfield)^\ast$, we define $\twocurrent(T): \mathcal{D}^2(\R \times \domain)\to \R$ by extending $T$ to $\R$ and letting
\begin{align*}
    \inner{\twocurrent(T)}{\tilde\phi}&=\inner{T}{\Lqform(\tilde\phi)}.
\end{align*}
\end{defin}
We can actually say a little bit more about $\twocurrent(T)$.
\begin{lem}
    Suppose $T\in L^{p'}(\covecfield)^\ast$, then $\twocurrent(T)\in \mathcal{D}_2(\R\times \domain)$.
\end{lem}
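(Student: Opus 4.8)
The goal is to verify that $\twocurrent(T)$ is a continuous linear functional on the space $\mathcal{D}^2((-\frac{1}{2}, \frac{3}{2})\times\domain)$ of compactly supported smooth $2$-forms, equipped with its usual inductive-limit topology. Linearity is clear: in the coordinates of Definition~\ref{def: map to two forms} the coefficients $\tilde\phi_{ti}$ depend linearly on $\tilde\phi$, so $\Lqform$ is a linear map into $L^{p'}(\covecfield)$, and composing with the linear functional $T$ gives a linear map. For continuity the plan is to establish a bound of the form $\abs{\inner{\twocurrent(T)}{\tilde\phi}}\le C\,\max_i\sup\abs{\tilde\phi_{ti}}$ valid for all test forms; this is actually stronger than needed, showing $\twocurrent(T)$ is a current of order zero.

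The main step is to estimate $\norm{\Lqform(\tilde\phi)}_{L^{p'}(\covecfield)}$. Writing $\tilde\phi$ as in Definition~\ref{def: map to two forms}, for each $t$ and $x$ the covector $\Lqform(\tilde\phi)(t)(x)=\sum_{i=1}^n\tilde\phi_{ti}(t,x)\,dx^i$ has Euclidean norm $\bigl(\sum_{i=1}^n\tilde\phi_{ti}(t,x)^2\bigr)^{1/2}\le\sqrt{n}\,\max_{1\le i\le n}\sup_{(-1/2,3/2)\times\domain}\abs{\tilde\phi_{ti}}$. Taking the supremum over $x\in\domain$ and using that $[0,1]$ has Lebesgue measure $1$ (so $\norm{\cdot}_{L^{p'}}\le\norm{\cdot}_{L^\infty}$) gives
\[
\norm{\Lqform(\tilde\phi)}_{L^{p'}(\covecfield)}\le\sqrt{n}\,\max_{1\le i\le n}\ \sup_{(-1/2,3/2)\times\domain}\abs{\tilde\phi_{ti}}.
\]
Along the way one confirms that $\Lqform(\tilde\phi)$ indeed belongs to $L^{p'}(\covecfield)$, as asserted in Definition~\ref{def: map to two forms}: since $\tilde\phi$ is smooth with compact support, each $\tilde\phi_{ti}$ is uniformly continuous, hence $t\mapsto\tilde\phi_{ti}(t,\cdot)$ is a continuous --- in particular strongly measurable and bounded --- map into $C_0(\domain)$, so $\Lqform(\tilde\phi)$ is strongly measurable into $\covecfield$ with finite $L^{p'}$-norm.

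Because $T$ lies in the dual of the Banach space $L^{p'}(\covecfield)$ it has finite operator norm, so combining with the previous display we obtain, for every $\tilde\phi\in\mathcal{D}^2((-\frac{1}{2}, \frac{3}{2})\times\domain)$,
\[
\abs{\inner{\twocurrent(T)}{\tilde\phi}}=\abs{\inner{T}{\Lqform(\tilde\phi)}}\le\sqrt{n}\,\norm{T}_{L^{p'}(\covecfield)^\ast}\max_{1\le i\le n}\ \sup_{(-1/2,3/2)\times\domain}\abs{\tilde\phi_{ti}}.
\]
The right-hand side is dominated by the $C^0$-seminorm of $\tilde\phi$, which is continuous on each Fr\'echet building block $\mathcal{D}_K^2((-\frac{1}{2}, \frac{3}{2})\times\domain)$ of the inductive-limit topology, so $\twocurrent(T)$ is continuous and thus belongs to $\mathcal{D}_2((-\frac{1}{2}, \frac{3}{2})\times\domain)$. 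I do not anticipate any genuine obstacle here; the one point demanding a little care is the verification that $\Lqform(\tilde\phi)\in L^{p'}(\covecfield)$ --- strong measurability into the nonseparable space $\covecfield$ --- which is handled by the continuity-in-$t$ observation above.
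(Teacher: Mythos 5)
Your proposal is correct and follows essentially the same route as the paper: the key point in both is the estimate $\norm{\Lqform(\tilde\phi)}_{L^{p'}(\covecfield)}\leq C_n\norm{\tilde\phi}_{C(\tilde K)}$ combined with the boundedness of $T$ on $L^{p'}(\covecfield)$, the only difference being that the paper phrases this as sequential continuity along a Fr\'echet-convergent sequence while you state it as a direct order-zero bound on each $\mathcal{D}^2_K$. Your additional verification that $\Lqform(\tilde\phi)$ is strongly measurable (via continuity of $t\mapsto\tilde\phi_{ti}(t,\cdot)$ into $C_0(\domain)$) is a welcome detail that the paper leaves implicit in Definition~\ref{def: map to two forms}.
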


\begin{proof}
By the linearity of $T$, it is clear that $\twocurrent(T)$ is also linear.  Now suppose $\{\tilde{\phi}_i\}_{i\in \N}\subset \mathcal{D}^2(\R\times \domain)$ converges in the Fr\'echet topology to some $\tilde{\phi}$, this implies there is are compact $I\subset \R$ and $K\subset\domain$ such that $\bigcup_{i\in \N} \spt \tilde{\phi}_i\cup \spt \tilde{\phi}\subset I\times K$ and $\norm{\tilde{\phi}_i-\tilde{\phi}}_{C(I\times K)}\to 0$ (see \cite[Section 9.1]{Folland99}).  If
\begin{align*}
    \tilde\phi&=\sum_{j=1}^n \tilde\phi_{tj}(t, x)dt\wedge dx^j+\sum_{j'<j}^n\tilde\phi_{j'j}(t, x)dx^{j'}\wedge dx^j,\\
    \tilde\phi_i&=\sum_{j=1}^n \tilde\phi_{tj, i}(t, x)dt\wedge dx^j+\sum_{j'<j}^n\tilde\phi_{j'j, i}(t, x)dx^{j'}\wedge dx^j,
\end{align*}
then for some $C_n>0$,
\begin{align*}
    \norm{\Lqform(\tilde \phi_i)-\Lqform(\tilde \phi)}_{L^{p'}(\R; \covecfield)}
    &\leq  C_n\left(\int_I\norm{\sum_{j=1}^n\abs{\tilde{\phi}_{tj, i}(t, \cdot)-\tilde{\phi}_{tj}(t, \cdot)}}_{C(K)}^{p'}dt\right)^{\frac{1}{{p'}}}\\
    &\leq C_n\norm{\tilde{\phi}_i-\tilde{\phi}}_{C(I\times K)}\to 0,
\end{align*}
thus
\begin{align*}
    \lim_{i\to\infty}\inner{\twocurrent(T)}{\tilde{\phi}_i}&=\lim_{i\to\infty}\inner{T}{\Lqform(\tilde\phi_i)}=\inner{T}{\Lqform(
\tilde\phi)}
=\inner{\twocurrent(T)}{\tilde{\phi}},
\end{align*}
showing that $\twocurrent(T)\in \mathcal{D}_2(\R\times \domain)$. 
\end{proof}
\begin{rem}\label{rem: vector measures and 1 currents}
    It is well-known (see \cite[Chapter 7, Currents Representable by Integration]{KrantzParks08}) that the subset of $\mathcal{D}_1(\domain)$ with finite mass $\Mcur$ can be identified with $\vecmeas$. By this we mean that if $\tilde{T}\in \mathcal{D}_1(\domain)$ with $\Mcur(\tilde{T})<\infty$, it can be extended to all of $\covecfield$ as an element of $\vecmeas=(\covecfield)^\ast$, while any element of $\vecmeas$ can be restricted to $\mathcal{D}^1(\domain)$ to yield an element of $\mathcal{D}_1(\domain)$. We will freely use this identification throughout.

    Moreover, (see \cite[Section 7]{FedererFleming60}) it is known that for any domain $D$ and $\{\tilde{T}_k\}_{k\in\N}\cup \{\tilde{T}\}\subset \mathcal{D}_1(D)$, the convergence $\tilde{T}_k\flatto \tilde{T}$ implies  $\tilde{T}_k\currentto \tilde{T}$, and if $\sup_{k\in \N}(\Mcur(\tilde{T}_k)+\Mcur(\partial \tilde{T}_k))<\infty$, the converse holds.
\end{rem}
\section{Discrete transport paths}\label{sec: discrete}
Similar to \cite{QXia2003} and \cite{BrancoliniWirth18} in the time-independent case, we first define time-dependent paths connecting time-dependent atomic measures and their associated energies, then show such energies can be extended to the nonatomic case. For brevity we will omit the adjective ``time-dependent'' for the remainder of the paper.  

By an \emph{atomic probability measure} we will mean a Borel probability measure that is a finite linear combination of delta measures; we will denote this collection by  $\atomic$. An \emph{atomic probability measure-valued function} will be some $a: [0, 1]\to \atomic$ of the form 
\begin{align}\label{eqn: atomic measure}
    a[t]:=\displaystyle\sum_{j=1}^{J} a_j(t) \delta_{x_j}
\end{align} 
where $J\geq 1$, each $a_j: [0,1]\to [0,\infty)$ is Borel measurable, and each $\delta_{x_j}$ a Dirac measure supported at $x_j$ for some collection of points $\{x_j\}_{j=1}^J\subset \domain$; where we also  assume the mass condition
\begin{align}\label{eqn: total mass}
    \sum_{j=1}^J a_j(t)=1,\quad a.e.\ t\in [0, 1].
\end{align}
We emphasize, the weights of such a function are allowed to depend on time, but the points supporting the delta measures may not. 
By an abuse of notation, we will use $\spt a$ to denote the collection of points $\{x_j\}_{j=1}^J$ associated to $a$, and will write
\begin{align*}
    L^p(\atomic):=\{a\in L^p(\meas)\mid a\text{ is an atomic probability measure-valued function}\},\\
    W^{1, p}(\atomic):=\{a\in W^{1, p}(\meas)\mid a\text{ is an atomic probability measure-valued function}\},\\
    \overstar{W}^{1, p}(\atomic):=\{a\in \overstar{W}^{1, p}(\meas)\mid a\text{ is an atomic probability measure-valued function}\}.
\end{align*}
We now define the notion of a transport path between two atomic measure-valued functions.
\begin{defin}\label{def: discrete transport path}
Suppose $a^\pm$ are atomic measure-valued functions. A \emph{discrete transport path from $a^+$ to $a^-$}, is a weighted directed graph $G$ with finite vertex set $V(G)\subset \domain$, directed edge set $E(G)$, and a weight function
\begin{align*}
    w: E(G)\times[0,1]\to[0,\infty)
\end{align*}
such that $w(e, \cdot)$ is Borel measurable for each $e\in E(G)$ and such that the following properties hold: 
\begin{enumerate}[(i)]
    \item\label{eqn: boundary in vertices} $\spt a^+\cup \spt a^-\subseteq V(G)$,
    \item\label{eqn: Kirchhoff} Writing $e^-$ for the terminal and $e^+$ for the initial endpoint of any edge $e\in E(G)$ and 
    \begin{align*}
    \In(y):=\{e\in E(G)\mid e^-=y\},\quad \Out(y):=\{e\in E(G)\mid e^+=y\},
\end{align*}
for any $y\in V(G)$,
\begin{align*}
    a^+[t](\{y\})+\sum_{e\in \In(y)}w(e, t)&=a^-[t](\{y\})+\sum_{e\in \Out(y)}w(e, t),\quad a.e.\ t\in [0, 1].
\end{align*}
We will write $G\in \DPath(a^+, a^-)$ for short.
\end{enumerate}
\end{defin}
We will write $\overrightarrow{y_1, y_2}$ for the directed edge from $y_1\in \domain$ to $y_2\in \domain$. Condition~\eqref{eqn: Kirchhoff} above states $G$ must satisfy a Kirchhoff condition at each vertex for a.e. $t\in [0, 1]$. Note that given two vertices $y_1$ and $y_2\in V(G)$, we allow the two directed edges $e_1:=\overrightarrow{y_1, y_2}$ and $e_2:=\overrightarrow{y_2, y_1}$ to both belong to $E(G)$. 
We may interpret a discrete transport path $G$ as an $\vecmeas$-valued function on $[0, 1]$ by setting
\begin{align}\label{eqn: vector measure rep}
    G[t](E)=\sum_{e\in E(G)}w(e, t)\HOne{E\cap e}\vec{e}, \quad \forall t\in [0, 1]
\end{align}
where $\vec{e}$ is the unit length vector in the direction of edge $e$, and $\mathcal{H}^1$ is the $1$-dimensional Hausdorff measure. We write $[[e]]:=\vec e\mathcal{H}^1\vert_e$ and by an abuse of notation, denote the above discrete transport path by $G=\sum_{e\in E(G)}w(e, \cdot)[[e]]$. 
\begin{defin}
If $a^\pm\in W^{1, p}(\atomic)$ (resp. $\overstar{W}^{1, p}(\atomic)$) we say $G$ is a \emph{(time periodic) discrete $W^{1, p}$ transport path from $a^+$ to $a^-$} (resp. \emph{$\overstar{W}^{1, p}$ transport path}) if $G\in W^{1,p}(\vecmeas)\cap \DPath(a^+, a^-)$ (resp. $G\in \overstar{W}^{1,p}(\vecmeas)\cap \DPath(a^+, a^-)$) where we interpret $G$ in the sense of \eqref{eqn: vector measure rep}, and $G[0]=G[1]$. We will write $G\in W^{1, p}\mhyphen\Path_D(a^+, a^-)$ (resp. $G\in \overstar{W}^{1, p}\mhyphen\Path_D(a^+, a^-)$).

We make an analogous definition by replacing $W^{1, p}$ everywhere by $L^p$ above, without the requirement $G[0]=G[1]$.
\end{defin}
Next we define a notion of energy for discrete (actually $L^p$) transport paths. Here, following \cite{BrancoliniWirth18}, we consider more general energies than the concave powers used in \cite{QXia2003}.
\begin{defin}[\cite{BrancoliniWirth18}]\label{def: transportation cost}
    A \emph{transportation cost} is a function $\tau: [0, \infty)\to [0, \infty)$ satisfying:
    \begin{enumerate}[(a)]
        \item $\tau(0)=0$, $\tau>0$ on $(0, \infty)$,
        \item $\tau$ is nondecreasing,
        \item $\tau(s_1+s_2)\leq \tau(s_1)+\tau(s_2)$ for any $s_1$, $s_2\in [0, \infty)$,
        \item $\tau$ is lower semi-continuous.
    \end{enumerate}
    We say $\tau$ is \emph{admissible} if there is a concave $\beta: [0, \infty)\to [0, \infty)$ satisfying 
    \begin{align*}
        \int_0^1s^{\frac{1}{n}-2}\beta(s)ds<\infty,
    \end{align*}
    with $\tau\leq \beta$ everywhere.
\end{defin}

\begin{defin}
For a transportation cost $\tau$ and a rectifiable $m$-current $\tilde T=[[\tilde E, \tilde{V}, \tilde \theta]]$, the \emph{$\tau$-mass} of $\tilde T$ is defined by
\begin{align*}
    \Mcur^\tau(\tilde T):=\int_{\tilde E}\tau(\tilde\theta(x)) d\mathcal{H}^m(x).
\end{align*}
\end{defin}
Note that if $\tau(s)=\abs{s}$, then $\Mcur^\tau=\Mcur$ on rectifiable currents, which we use freely.

We also have use for the following known lemma, whose (quite elementary) proof we provide for completeness.
\begin{lem}[{\cite[proof of Theorem 5]{Laatsch62}, \cite[Lemma 1.3]{BrancoliniWirth18}}]\label{lem: subadditive bound}
    If $\tau$ is a transportation cost, for any $m>0$ let
    \begin{align*}
        \rho(\tau, m):=\inf_{w\in[m/2, m]}\frac{\tau(w)}{w}.
    \end{align*}
    Then for any $w\in [0, m]$, it holds that $\tau(w)\geq \rho(\tau, m)w$.
\end{lem}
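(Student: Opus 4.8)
The plan is to prove Lemma~\ref{lem: subadditive bound} by a direct argument exploiting subadditivity of $\tau$ together with its monotonicity, reducing an arbitrary $w\in[0,m]$ to a bounded number of pieces each lying in the ``good'' interval $[m/2,m]$. The statement is trivial when $w=0$ since $\tau(0)=0$, so I would assume $w\in(0,m]$.

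First I would observe that if $w\in[m/2,m]$, then by definition $\frac{\tau(w)}{w}\geq \rho(\tau,m)$, so $\tau(w)\geq \rho(\tau,m)w$ immediately. The remaining case is $w\in(0,m/2)$. Here I would choose the unique integer $k\geq 1$ with $w\in(\tfrac{m}{2^{k+1}},\tfrac{m}{2^k}]$ — equivalently $2^k w\in(m/2,m]$. The key computation: set $w':=2^k w$, so $w'\in[m/2,m]$ and hence $\tau(w')\geq \rho(\tau,m)w'$. Now apply subadditivity $k$ times (i.e. property (c) of Definition~\ref{def: transportation cost}): writing $w'=2^k w = \underbrace{w+\cdots+w}_{2^k}$, we get $\tau(w')=\tau(2^k w)\leq 2^k\tau(w)$. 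Combining,
\begin{align*}
    2^k\tau(w)\geq \tau(2^k w)\geq \rho(\tau,m)\,2^k w,
\end{align*}
and dividing by $2^k$ yields $\tau(w)\geq \rho(\tau,m)w$, as desired.

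There is no real obstacle here; the only thing to be slightly careful about is the edge behavior when $\rho(\tau,m)$ could be zero or when the infimum defining $\rho(\tau,m)$ is not attained, but neither causes a problem — the inequality $\tau(w)\geq \rho(\tau,m)w$ is required only with the infimum value, not an attained minimum, and if $\rho(\tau,m)=0$ the claim is vacuous since $\tau\geq 0$. One should also note that the choice of $k$ is always possible for $w\in(0,m/2)$ since $\bigcup_{k\geq 1}(\tfrac{m}{2^{k+1}},\tfrac{m}{2^k}]=(0,m/2]$. Monotonicity of $\tau$ (property (b)) is not strictly needed for this argument — subadditivity alone suffices — though it is consistent with the framework. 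I would present the two cases ($w\geq m/2$ and $w<m/2$) and the dyadic doubling trick, and that completes the proof.
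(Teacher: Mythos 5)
Your proof is correct and follows essentially the same argument as the paper: scale $w$ up by an integer multiple into $[m/2,m]$, invoke the definition of $\rho(\tau,m)$ there, and use subadditivity to transfer the bound back to $w$. The only (immaterial) difference is that you use the dyadic multiplier $2^k$ while the paper takes an arbitrary integer $k$ with $kw\in[m/2,m]$.
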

Note that since $\tau>0$ on $(0, \infty)$ and is lower semi-continuous, $\rho(\tau, m)>0$ for any $m>0$.
\begin{proof}
    Fix $m>0$, suppose $w\in (0, m/2)$, and let $k\in \N$ be such that $kw\in[m/2, m]$. Then by  subadditivity of $\tau$,
    \begin{align*}
        \rho(\tau, m)
        &\leq \frac{\tau(kw)}{kw}
        \leq \frac{k\tau(w)}{kw}
        =\frac{\tau(w)}{w}.
    \end{align*}
\end{proof}

\begin{defin}\label{def: system cost function}
Suppose $1\leq p\leq \infty$, $G=\sum_{e\in E(G)}w(e, \cdot)[[e]]$ is a discrete $L^p$ transport path, $\lambda>0$, and $\tau$ is a transportation cost. The \emph{$\MLp_p^\tau$ cost of $G$} is defined as
\begin{align}\label{eqn: system cost}
    \MLp_p^\tau (G)
    :&= \Lp{\sum_{e\in E(G)}\tau(\abs{w(e, \cdot)})\HOne{e}}.
\end{align}
\end{defin}
In the case when $\tau(x)=\abs{x}^\alpha$ for some $\alpha\in \R$, we will write $\MLp_p^\alpha$ for $\MLp_p^\tau$. Note in particular,
\begin{align*}
    \MLp_p^1(G)=\norm{G}_{L^p(\vecmeas)},
\end{align*}
a fact that we will use freely.
\subsection{Properties of discrete transport paths}\label{subsec: properties discrete paths}
We now show some properties of discrete $W^{1, p}$ transport paths, before defining the energy we are interested in minimizing.
\begin{lem}\label{lem: Lp weights}
If $1<p\leq \infty$, a discrete transport path given by $G=\sum_{e\in E(G)}w(e, \cdot)[[e]]$ belongs to $W^{1,p}(\vecmeas)$, if and only if it belongs to $\overstar{W}^{1,p}(\vecmeas)$, if and only if each weight function $w(e, \cdot)$ is the restriction to $[0, 1]$ of a function $W^{1,p}(\R; \R)$; additionally in all three cases $\overstar{G}'=G'=\sum_{e\in E(G)}w'(e, \cdot)[[e]]$ where $w'(e, \cdot)$ denotes the weak derivative of $w(e, \cdot)$. An analogous statement holds for $W^{1, p}(\atomic)$ and $\overstar{W}^{1, p}(\atomic)$.
\end{lem}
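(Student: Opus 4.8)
\emph{Proof plan.} Since $W^{1,p}(\vecmeas)\subset\overstar{W}^{1,p}(\vecmeas)$ with matching derivatives by Remark~\ref{rem: weak derivative remark}, the plan is to close the cycle of equivalences by showing: \textbf{(a)} if each $w(e,\cdot)$ is the restriction to $[0,1]$ of a function in $W^{1,p}(\R;\R)$, then $G\in W^{1,p}(\vecmeas)$; and \textbf{(b)} if $G\in\overstar{W}^{1,p}(\vecmeas)$, then each $w(e,\cdot)$ is such a restriction. The identity $G'=\sum_{e}w'(e,\cdot)[[e]]$ will fall out of the two arguments, and the statement for $W^{1,p}(\atomic)$, $\overstar{W}^{1,p}(\atomic)$ will follow by repeating everything with Dirac masses in place of the $[[e]]$.

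For \textbf{(a)} I would fix $e\in E(G)$. Since $[[e]]=\vec e\,\mathcal H^1\vert_e$ is a \emph{fixed} element of $\vecmeas$, for $f\in W^{1,p}(\R;\R)$ the map $t\mapsto f(t)[[e]]$ is strongly measurable (approximate $f$ by scalar simple functions and multiply by $[[e]]$), lies in $L^p(\R;\vecmeas)$ with $\norm{f(\cdot)[[e]]}_{L^p(\vecmeas)}=\HOne{e}\,\norm{f}_{L^p}$, and — because the Bochner integral commutes with the bounded linear map $s\mapsto s[[e]]$ — is weakly differentiable with derivative $f'(\cdot)[[e]]\in L^p(\R;\vecmeas)$; this is uniform in $p\in(1,\infty]$. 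Summing over the finitely many $e\in E(G)$ and using $1$-periodicity gives $G\in W^{1,p}(\vecmeas)$ with $G'=\sum_e w'(e,\cdot)[[e]]$, hence also $G\in\overstar{W}^{1,p}(\vecmeas)$ with the same derivative.

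For \textbf{(b)} the crux is to recover the scalar weights from $G$ by a duality pairing. I would assume (as is implicit in the statement) that the decomposition faithfully records the weights, i.e.\ that $\{[[e]]\}_{e\in E(G)}$ is linearly independent in $\vecmeas$ — this excludes only pathological edge configurations that are immaterial elsewhere — and then produce a dual family $\{\xi_e\}_{e\in E(G)}\subset\covecfield$ with $\inner{[[e']]}{\xi_e}=\delta_{e,e'}$; concretely one takes $\xi_e$ smooth, supported near a point of $e$ lying on no other edge, normalized so $\int_e\vec e\cdot\xi_e\,d\mathcal H^1=1$. Let $\bar G\in\overstar{W}^{1,p}(\R;\vecmeas)$ be the extension of $G$ to $\R$ as in the paper and $\bar w(e,\cdot)$ the $1$-periodic extension of $w(e,\cdot)$, so that $\bar G[t]=\sum_{e}\bar w(e,t)[[e]]$ and $\inner{\bar G[t]}{\xi_e}=\bar w(e,t)$ for $t\in[-1,2]$; in particular $\bar w(e,\cdot)$ is measurable and, since $\abs{\bar w(e,t)}\leq\norm{\xi_e}_{C(\domain)}\TV{\bar G[t]}$, belongs to $L^p((-1,2))$. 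For every $\eta\in C_c^\infty((-1,2))$, the defining identity of the $p$-weak-$\ast$ derivative yields
\begin{align*}
    \int_{-1}^{2}\eta'(t)\,\bar w(e,t)\,dt=-\inner{\overstar{\bar G}'}{\eta\xi_e},\qquad\abs{\inner{\overstar{\bar G}'}{\eta\xi_e}}\leq\norm{\overstar{\bar G}'}_{L^{p'}(\covecfield)^\ast}\norm{\xi_e}_{C(\domain)}\norm{\eta}_{L^{p'}}.
\end{align*}
Thus $\eta\mapsto\int_{-1}^{2}\eta'\bar w(e,\cdot)$ is bounded for the $L^{p'}$-norm, hence (as $1\leq p'<\infty$) represented by an $L^p((-1,2))$ function; so $\bar w(e,\cdot)\in W^{1,p}((-1,2))$, and combined with its $1$-periodicity (and a cutoff) this is exactly the statement that $w(e,\cdot)$ is the restriction to $[0,1]$ of a function in $W^{1,p}(\R;\R)$.

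Finally, \textbf{(a)} and \textbf{(b)} give the full equivalence: $G\in\overstar{W}^{1,p}(\vecmeas)$ forces the weights to be $W^{1,p}$, hence $G\in W^{1,p}(\vecmeas)$ with $G'=\sum_e w'(e,\cdot)[[e]]$, and by uniqueness of $p$-weak-$\ast$ derivatives this same element represents $\overstar{G}'$ — so the derivative formula holds in all three cases. The $W^{1,p}(\atomic)$ / $\overstar{W}^{1,p}(\atomic)$ statement is the same argument verbatim, with $\delta_{x_i}$ replacing $[[e]]$ and scalar test functions $\phi_i\in C_0(\domain)$ with $\phi_i(x_j)=\delta_{ij}$ (which exist just because the $x_i$ are distinct, so no genericity is needed), giving $\inner{a[t]}{\phi_i}=a_i(t)$. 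I expect the main obstacle to be step \textbf{(b)}: because $\overstar{\bar G}'$ lives only in $L^{p'}(\covecfield)^\ast$ rather than being a $\vecmeas$-valued map, one cannot differentiate $\bar G$ pointwise, and it is precisely the pairing against the fixed $\xi_e$ — together with the (mild) geometric care needed to build it — that turns the weak-$\ast$ statement into genuine scalar Sobolev regularity.
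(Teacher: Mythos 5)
Your proposal is correct and follows essentially the same route as the paper: the key step in both is to recover each weight by pairing the extension of $G$ with a covector field $\xi_e$ supported away from the other edges, bound $\eta\mapsto\inner{\overstar{G}'}{\eta\xi_e}$ in $L^{p'}$-norm, and invoke Riesz representation (using $p'<\infty$) to get $w(e,\cdot)\in W^{1,p}(\R;\R)$, with the converse done by direct computation with the Bochner integral. The only cosmetic differences are that you establish strong measurability of $\sum_e w(e,\cdot)[[e]]$ directly from scalar simple-function approximation rather than via the norming-set corollary of Diestel--Uhl used in the paper, and that you make explicit the mild genericity of the edge configuration that the paper's choice of $\xi$ also tacitly assumes.
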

\begin{proof}
    Since $W^{1,p}(\vecmeas)\subset \overstar{W}^{1,p}(\vecmeas)$, we first suppose $G\in \overstar{W}^{1,p}(\vecmeas)$. Then for any edge $e\in E(G)$,
\begin{align*}
    \HOne{e}^p\norm{w(e, \cdot)}_{L^p}^p&=\int_0^1\TV{w(e, t)[[e]]}^pdt\leq \int_0^1 \TV{G[t]}^pdt= \norm{G}_{L^p(\vecmeas)}^p,
\end{align*}
hence each $w(e, \cdot)\in L^p$. 

Next, fix an edge $e_0\in E(G)$ and let $\xi\in \covecfield$ be such that $\inner{[[e_0]]}{\xi}\neq 0$ and the support of $\xi$ does not intersect any edge in $E(G)\setminus \{e_0\}$. Extend $G$ to $\R$, and consider the corresponding $p$-weak-$\ast$ derivative $\overstar{G}'\in L^{p'}(\R; \covecfield)^\ast$. For any $\eta\in C^\infty_c(\R)$, we have
\begin{align*}
    \abs{\inner{\overstar{G}'}{\eta \xi}}
    &\leq \norm{\overstar{G}'}_{L^{p'}(\R; \covecfield)^\ast}\norm{\eta \xi}_{L^{p'}(\R; \covecfield)}\\
    &\leq \norm{\overstar{G}'}_{L^{p'}(\R; \covecfield)^\ast}\norm{\xi}_{ \covecfield}\norm{\eta}_{L^{p'}(\R)},
\end{align*}
thus we can extend the map $\eta\mapsto \inner{\overstar{G}'}{\eta \xi}$ to an element of $L^{p'}(\R)^\ast$. Since $p>1$ we have $p'<\infty$, thus by Riesz representation, there exists a function $g\in L^p(\R; \R)$ such that for any $\eta\in C^\infty_c(\R)$ we have
\begin{align*}
    \int_{\R}\eta(t)g(t)dt
    &= \inner{\overstar{G}'}{\eta \xi}
    =-\int_{\R}\eta'(t)\inner{G[t]}{\xi}dt
    =-\int_{\R}\eta'(t)w(e_0, t)dt\inner{[[e_0]]}{\xi}.
\end{align*}
Hence $w(e_0, \cdot)$ is weakly differentiable on $\R$, and its weak derivative is $g/\inner{[[e_0]]}{\xi}$; in particular $w(e_0, \cdot)\in W^{1,p}(\R; \R)$.

Now suppose for each $e\in E(G)$, the weight function $w(e, \cdot)\in W^{1, p}$. Then for any $\xi\in \covecfield$, we see $t\mapsto \inner{G[t]}{\xi}=\sum_{e\in E}w(e, t)\inner{[[e]]}{\xi}$ is Borel measurable on $[0, 1]$. The subset $\{\sum_{e\in E}q_e[[e]]\mid q_e\in \Q,\ \forall e\in E\}$ is countable and dense in the image $G([0, 1])$. Since $\covecfield$ is a norming set for $\vecmeas$ in the sense of \cite[Chapter II.1, Corollary 4]{DiestelUhl77}, by the same corollary we see $G$ is strongly measurable, and likewise for $\sum_{e\in E}w'(e, \cdot)[[e]]$. Since each $w(e, \cdot)\in W^{1, p}$ we have
\begin{align*}
    \norm{G}_{L^p(\vecmeas)}^p&\leq \sum_{e\in E(G)}\int_0^1\TV{w(e, t)[[e]]}^pdt=\sum_{e\in E(G)}\HOne{e}^p\norm{w(e, \cdot)}_{L^p}^p<\infty,
\end{align*}
hence $G\in L^p(\vecmeas)$, and similarly $\sum_{e\in E}w'(e, \cdot)[[e]]\in L^p(\vecmeas)$. Finally, for any $\eta\in C^\infty_c(\R)$ we calculate
\begin{align*}
    \int_\R \eta(t)\sum_{e\in E(G)}w'(e, t)[[e]]dt
    &=\sum_{e\in E(G)}\left(\int_\R\eta(t)w'(e, t)dt\right)[[e]]\\
    &=-\sum_{e\in E(G)}\left(\int_\R\eta'(t)w(e, t)dt\right)[[e]]
    =-\int_\R \eta'(t)G[t]dt,
\end{align*}
thus $G'=\sum_{e\in E}w'(e, \cdot)[[e]]$, hence $G\in W^{1, p}(\vecmeas)$. This finishes the proof of the equivalences, and the above calculation along with uniqueness of weak derivatives yields the claimed representation of $\overstar{G}'=G'$.

The proof for $W^{1, p}(\atomic)$ and $\overstar{W}^{1, p}(\atomic)$ is essentially identical, replacing $\xi$ above by a compactly supported real-valued continuous function instead. 
\end{proof}
\begin{rem}
    By the above lemma, we see that the weights $w(e, \cdot)$ in a discrete $W^{1, p}$ transport path are (absolutely) continuous on $[0, 1]$, hence the assumption $G[0]=G[1]$ is meaningful. For the remainder of this paper, we will assume the weight functions $w(e, \cdot)$ are taken as the (unique) absolutely continuous versions for any discrete $W^{1, p}$ transport path. A similar comment holds for weights of elements of $W^{1, p}(\atomic)$.

    Also, the lemma shows that a discrete $W^{1,p}$ transport path can be viewed as a polyhedral $1$-chain with coefficients in $W^{1,p}$.
\end{rem}

\begin{rem}\label{rem: boundary of transport path}
    If $G = \sum_{e\in E(G)}w(e, \cdot)[[e]]\in \WDPath(a^+, a^-)$, then for a.e. $t\in [0, 1]$ and any $\phi \in \mathcal{D}^0(\domain)$ we have (using the a.e. Kirchhoff conditions on $G$)
    \begin{align*}
        \inner{\partial G[t]}{\phi}&=\inner{G[t]}{d\phi}
    =\int_0^1\sum_{e\in E(G)}\int_ew(e, t)\inner{[[e]]}{d\phi(x)}d\mathcal{H}^1(x) dt\\
    &=\sum_{y\in V(G)}\int_0^1\phi(y)\left(\sum_{e\in \In(y)}w(e, t)-\sum_{e\in \Out(y)}w(e, t)\right)dt\\
    &=\inner{a^-[t]-a^+[t]}{\phi},
    \end{align*}
    hence $\partial G[t]=a^-[t]-a^+[t]$. A similar calculation yields $\partial G'[t]=(a^-)'[t]-(a^+)'[t]$.

    We can also see, if $\tilde\phi=\tilde\phi_tdt+\sum_{i=1}^n\tilde\phi_i dx^i\in \mathcal{D}^1((-3, 4)\times \domain)$ with $\norm{\tilde\phi}_{C(\domain)}\leq 1$ we have
    \begin{align*}
        \abs{\inner{\partial \twocurrent(G)}{\tilde\phi}}
        &=\abs{\inner{G}{\Lqform(d\tilde \phi)}}\\
        &=\abs{\int_{-3}^4\sum_{e\in E(G)}\sum_{i=1}^n\int_ew(e, t)(\partial_t\tilde\phi_i-\partial_{x^i}\tilde\phi_t)\inner{[[e]]}{dx^i}d\mathcal{H}^1(x) dt}\\
        &\leq 7\abs{\sum_{e\in E(G)}\int_0^1 w'(e, t)\sum_{i=1}^n\int_e\tilde\phi_i(t, x)\inner{[[e]]}{dx^i}d\mathcal{H}^1(x)dt}\\
        &+7\abs{\int_0^1\inner{a^+[t]-a^-[t]}{\tilde\phi_t(t, \cdot)}dt}\\
        &\leq 7\norm{G'}_{L^p(\vecmeas)}+14;
    \end{align*}
    here we have used that $a^\pm[t]$ are probability measures for a.e. $t\in [0, 1]$, and we assume the cutoff functions used in extending $G$ to $\R$ has derivatives bounded by $1$. Since $\twocurrent(G)\in \mathcal{D}_2((-3, 4)\times \domain)$, in particular for some universal constant $C>0$ we have
    \begin{align*}
        \Mcur(\partial\twocurrent(G))\leq C(1+\norm{G'}_{L^p(\vecmeas)}).
    \end{align*}
\end{rem}

\begin{prop}\label{prop: discrete energies finite}
If $\tau$ is an admissible transportation cost, then $\MLp_p^\tau (G)<\infty$ for any discrete $L^p$ transport path $G$.
\end{prop}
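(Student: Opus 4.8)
The plan is to reduce the statement to a one-variable estimate on each edge. Writing $G=\sum_{e\in E(G)}w(e,\cdot)[[e]]$, the edge set $E(G)$ is finite and each edge has finite length $\HOne{e}$, so by Definition~\ref{def: system cost function} we have $\MLp_p^\tau(G)=\Lp{\sum_{e\in E(G)}\tau(\abs{w(e,\cdot)})\HOne{e}}$, i.e.\ the $L^p([0,1])$ norm of a finite nonnegative linear combination (with coefficients $\HOne{e}\ge 0$) of the scalar functions $t\mapsto \tau(\abs{w(e,t)})$. Hence it suffices to show $\tau(\abs{w(e,\cdot)})\in L^p([0,1])$ for each $e\in E(G)$ and then sum.

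First I would record that each weight lies in $L^p([0,1])$: this is exactly the estimate that opens the proof of Lemma~\ref{lem: Lp weights}, namely $\HOne{e}^p\Lp{w(e,\cdot)}^p=\int_0^1\TV{w(e,t)[[e]]}^p\,dt\le\int_0^1\TV{G[t]}^p\,dt=\norm{G}_{L^p(\vecmeas)}^p<\infty$ (with the evident modification when $p=\infty$), which uses only that $G\in L^p(\vecmeas)$. The one genuine point is that $\tau$ grows at most linearly. Since $\tau$ is nondecreasing and subadditive, for $s\ge 0$ and $m:=\max(\lceil s\rceil,1)\in\N$ we have $\tau(s)\le\tau(m)\le m\,\tau(1)\le(1+s)\tau(1)$ (alternatively, the concave majorant $\beta\ge\tau$ supplied by admissibility, being nonnegative on $[0,\infty)$ hence nondecreasing, satisfies $\beta(s)\le s\,\beta(1)$ for $s\ge 1$ and $\beta(s)\le\beta(1)$ for $s\in[0,1]$, so $\tau\le\beta\le(1+s)\beta(1)$). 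Either way $\tau(\abs{w(e,t)})\le C(1+\abs{w(e,t)})$ a.e.\ for a constant $C$ depending only on $\tau$.

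To conclude, since $[0,1]$ has finite Lebesgue measure the constant function lies in $L^p([0,1])$, so $C(1+\abs{w(e,\cdot)})\in L^p([0,1])$ by the previous step; from $0\le\tau(\abs{w(e,\cdot)})\le C(1+\abs{w(e,\cdot)})$ and monotonicity of the $L^p$ norm we get $\tau(\abs{w(e,\cdot)})\in L^p([0,1])$, and summing over the finitely many edges gives $\MLp_p^\tau(G)<\infty$. I do not expect a substantive obstacle: the entire content is the linear growth bound, which is elementary (indeed it does not even use the integrability built into admissibility), and the $p=\infty$ case goes through verbatim with essential suprema in place of integrals.
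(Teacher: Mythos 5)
Your proof is correct and follows the same basic strategy as the paper's: reduce to the finitely many edges, bound $\tau$ by an affine function of the weight, and use that each $w(e,\cdot)\in L^p([0,1])$ --- which, as you correctly note, follows from the opening estimate in the proof of Lemma~\ref{lem: Lp weights} using only $G\in L^p(\vecmeas)$, so it applies to discrete $L^p$ (not just $W^{1,p}$) paths. The only real difference is where the affine bound comes from: the paper takes the tangent line $\beta(w)\leq\beta(t_0)+\beta'(t_0)(w-t_0)$ to the concave majorant at a point of differentiability, while your primary route derives $\tau(s)\leq(1+s)\tau(1)$ directly from monotonicity and subadditivity of $\tau$. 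That version is marginally stronger in scope, since it shows $\MLp_p^\tau(G)<\infty$ for \emph{every} transportation cost, with admissibility playing no role; your fallback argument via $\beta$ (nonnegative concave implies nondecreasing, hence $\beta(s)\leq\max(1,s)\beta(1)$) is essentially the paper's estimate in a slightly different dress. Both variants, including the $p=\infty$ modification, go through without difficulty.
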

\begin{proof}
Let $\beta$ be the concave function in the definition of admissibility, and $t_0>0$ be a point where $\beta$ is differentiable. Then we find that
\begin{align*}
    \MLp_p^\tau (G) &=\Lp{\sum_{e\in E(G)}\tau(w(e,\cdot)) \HOne{e}} \\
    &\leq \sum_{e\in E(G)} \Lp{\beta(w(e,\cdot))} \HOne{e} \\
    &\leq \sum_{e\in E(G)} \Lp{\beta(t_0)+\beta'(t_0)(w(e,\cdot)-t_0)} \HOne{e}\\
    &\leq \sum_{e\in E(G)} \left(\beta(t_0)+\beta'(t_0)(\Lp{w(e,\cdot)}+t_0)\right) \HOne{e}<\infty,
\end{align*}
using Lemma~\ref{lem: Lp weights} for the last line.
\end{proof}

We now discuss the concept of \emph{cycles} in discrete transport paths, which plays a role in defining the energy we will consider.
\begin{defin}\label{def: instantaneous cycle}
Let $G$ be a discrete $L^p$ transport path 
and suppose there exist a collection of distinct vertices $\{y_j\}_{j=1}^J\subset V(G)$ and edges $\{e_j\}_{j=1}^J\subset E(G)$ where $e_j=\overrightarrow{y_j, y_{j+1}}$  for $j=1, \ldots, J-1$  and $e_J=\overrightarrow{y_J, y_1}$. If $C$ is the ordered list $e_1, \ldots, e_J$, recall $C$ is said to be a \emph{directed cycle}; when we say $C\subset E(G)$ it will be understood that $C$ is considered as an ordered list. If $\min_{e_j\in C}w(e_j, t)>0$ for some $t\in [0, 1]$, we say \emph{$G$ has a strong cycle (supported on $C$) at $t$}. We say $G$ is \emph{never cyclic} if its has no strong cycles for any $t\in [0, 1]$.
\end{defin}
Note that a $G$ could be never cyclic, but the directed graph underlying it could contain a directed cycle: certain combinations of edge weights could be zero at various times to prevent any strong cycles. Also technically, we should consider when $G$ is ``almost never cyclic,'' however since all discrete transport paths we consider will be $W^{1, p}$ transport paths and we identify the edge weight functions with their continuous versions, $G$ is never cyclic if and only if it has no strong cycles for a.e. $t\in [0, 1]$.

We now consider a decomposition of a discrete transport path into ``cyclical'' and its ``acyclical'' parts. 
\begin{defin}\label{def: cycle decomposition}
    Let $G=\sum_{e\in E(G)}w(e, \cdot)[[e]]$ be a discrete $W^{1, p}$ transport path, $\{C_i\}_{i=1}^I$ be all of the directed cycles contained in $E(G)$, and $\sigma\in S_I$ (here $S_I$ is the set of all permutations of $\{1, \ldots, I\}$). Now iteratively define the nonnegative functions $w_{\sigma, 0}(e, \cdot)$, $W_{\sigma, i}\in W^{1, p}$ for $e\in E(G)$ and $i=1, \ldots, I$ by
    \begin{align*}
        W_{\sigma, 1}(t):&=\min_{e\in C_{\sigma(1)}}w(e, t),\\
        W_{\sigma, i+1}(t):&=\min_{e\in C_{\sigma(i+1)}}\left(w(e, t)-\sum_{i'=1}^i W_{\sigma, i'}(t)\mathds{1}_{C_{\sigma(i')}}(e)\right),\ i=1, \ldots, I-1,\\
        w_{\sigma, 0}(e, t):&=w(e, t)-\sum_{i=1}^I W_{\sigma, i}(t)\mathds{1}_{C_{\sigma(i)}}(e).
    \end{align*}
    Then we can define
    \begin{align*}
        G_{\sigma, 0}:&=\sum_{e\in E(G)}w_{\sigma, 0}(e, \cdot)[[e]], \quad
        G_{\sigma, i}:=\sum_{e\in C_{\sigma(i)}}W_{\sigma, i}[[e]],\ i=2, \ldots, I
    \end{align*}
    and we call the collection $\{G_{\sigma, 0}\}\cup \{G_{\sigma, i}\}_{i=1}^I$ \emph{the decomposition of $G$ with respect to $\sigma$}.
\end{defin}
Effectively, we want to count the derivatives of the weights of edges belonging to multiple cycles or a cycle and the ``acyclic part'' of a discrete transport path as contributing energy separately (for more discussion, see subsection~\ref{subsec: model choice} below). Note however, the order in which the directed cycles are ordered will change the resulting decomposition. With this in mind, we define the energy of a discrete transport path in the following manner.
\begin{defin}
Given $\lambda>0$, $1< p\leq\infty$, a transportation cost $\tau$, and a discrete $W^{1, p}$ transport path $G$, we define the  energy $\DELp^{\tau, p}_{\lambda}$ of $G$ by
\begin{align*}
    \DELp^{\tau, p}_{\lambda}(G):&=\MLp_{p}^\tau(G)+\lambda\max_{\sigma\in S_I}\left(\norm{G'_{\sigma, 0}}_{L^p(\vecmeas)}+\sum_{i=1}^I\norm{G'_{\sigma, i}}_{L^p(\vecmeas)}\right),
\end{align*}
where $I$ is the number of directed cycles in $E(G)$.
\end{defin}
Having defined the relevant energy as above for discrete transport paths, we now show a quick lemma regarding the elimination of cycles.
\begin{lem}\label{lem: eliminating cycles}
    Suppose $\tau$ is a transportation cost, $1< p\leq \infty$, and $\lambda>0$, and $G\in \WDPath(a^+, a^-)$ for some $a^\pm\in W^{1, p}(\atomic)$. 
    \begin{enumerate}
        \item \label{eqn: never cyclic energy}
        If $G$ is never cyclic, then 
        \begin{align*}
            \DELp^{\tau, p}_\lambda(G)=\MLp^\tau_p(G)+\lambda \norm{G'}_{L^p(\vecmeas)}.
        \end{align*}
        \item \label{eqn: reducing energy}
        If $\spt a^+\cap \spt a^-=\emptyset$, there exists $\tilde{G}\in \WDPath(a^+, a^-)$ that is never cyclic, such that $\DELp^{\tau, p}_\lambda(\tilde{G})\leq \DELp^{\tau, p}_\lambda(G)$.
    \end{enumerate}
\end{lem}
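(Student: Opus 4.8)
The plan is to base both parts on the cycle decomposition of Definition~\ref{def: cycle decomposition}, together with Lemma~\ref{lem: Lp weights}. For part~\eqref{eqn: never cyclic energy}, I would fix any $\sigma\in S_I$ and show by induction on $i$ that $W_{\sigma,i}\equiv 0$ on $[0,1]$: since $G$ is never cyclic, for every directed cycle $C\subseteq E(G)$ and every $t$ we have $\min_{e\in C}w(e,t)\leq 0$, hence $=0$ because the weights are nonnegative, so $W_{\sigma,1}\equiv 0$; and if $W_{\sigma,1}=\dots=W_{\sigma,i}\equiv 0$ then the $i$-th intermediate weight in Definition~\ref{def: cycle decomposition} equals $w$, giving $W_{\sigma,i+1}(t)=\min_{e\in C_{\sigma(i+1)}}w(e,t)=0$. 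Hence $w_{\sigma,0}=w$, $G_{\sigma,0}=G$, and $G_{\sigma,i}=0$ for $i\geq 1$, so for every $\sigma$ the quantity $\norm{G'_{\sigma,0}}_{L^p(\vecmeas)}+\sum_{i=1}^I\norm{G'_{\sigma,i}}_{L^p(\vecmeas)}$ equals $\norm{G'}_{L^p(\vecmeas)}$, and taking the maximum over $\sigma\in S_I$ yields the identity.

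For part~\eqref{eqn: reducing energy}, I would set $\tilde G:=G_{\sigma,0}$ for an arbitrary fixed $\sigma\in S_I$ and first check that $\tilde G\in\WDPath(a^+,a^-)$. A short induction gives that each intermediate weight $w(e,\cdot)-\sum_{i'\leq i}W_{\sigma,i'}\mathds{1}_{C_{\sigma(i')}}(e)$ is nonnegative, because at the $(i+1)$-th step one subtracts from the edges of $C_{\sigma(i+1)}$ precisely the current minimum over $C_{\sigma(i+1)}$ of those weights; in particular $0\leq w_{\sigma,0}(e,\cdot)\leq w(e,\cdot)$, and $w_{\sigma,0}(e,\cdot)$ is a finite combination of pointwise minima of the $W^{1,p}$, $1$-periodic functions $w(e,\cdot)$, hence is itself $W^{1,p}$ and $1$-periodic, so Lemma~\ref{lem: Lp weights} gives $\tilde G\in W^{1,p}(\vecmeas)$ with $\tilde G[0]=\tilde G[1]$. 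Since each $C_{\sigma(i)}$ is a simple directed cycle it contributes one incoming and one outgoing edge at each of its vertices and no edge elsewhere, so subtracting the common weight $W_{\sigma,i}$ along it does not alter the Kirchhoff balance at any vertex; hence $\tilde G\in\DPath(a^+,a^-)$ with $\partial\tilde G[t]=a^-[t]-a^+[t]$. Next, $\tilde G$ is never cyclic: its directed cycles are among $C_1,\dots,C_I$, and for each $j$, directly after the decomposition step treating $C_j$ some edge of $C_j$ carries weight $0$ at every $t$, while the remaining steps only decrease weights, so $\min_{e\in C_j}w_{\sigma,0}(e,t)=0$ for all $t$.

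Finally, to see the energy does not increase, apply part~\eqref{eqn: never cyclic energy} to $\tilde G$: $\DELp^{\tau,p}_\lambda(\tilde G)=\MLp^\tau_p(\tilde G)+\lambda\norm{\tilde G'}_{L^p(\vecmeas)}$. Because $\tau$ is nondecreasing and $0\leq w_{\sigma,0}(e,\cdot)\leq w(e,\cdot)$, for a.e.\ $t$ we have $\sum_e\tau(w_{\sigma,0}(e,t))\HOne{e}\leq\sum_e\tau(w(e,t))\HOne{e}$ as nonnegative measures, so comparing total variations and then $L^p$ norms gives $\MLp^\tau_p(\tilde G)\leq\MLp^\tau_p(G)$; moreover $\norm{\tilde G'}_{L^p(\vecmeas)}=\norm{G'_{\sigma,0}}_{L^p(\vecmeas)}\leq\max_{\sigma'\in S_I}\big(\norm{G'_{\sigma',0}}_{L^p(\vecmeas)}+\sum_{i=1}^I\norm{G'_{\sigma',i}}_{L^p(\vecmeas)}\big)$, which is $\lambda^{-1}$ times the second term of $\DELp^{\tau,p}_\lambda(G)$. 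Adding these two estimates gives $\DELp^{\tau,p}_\lambda(\tilde G)\leq\DELp^{\tau,p}_\lambda(G)$.

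The main obstacle is the bookkeeping in the two inductions for part~\eqref{eqn: reducing energy}: one has to follow the iterated subtractions of Definition~\ref{def: cycle decomposition} closely enough to see simultaneously that nonnegativity is preserved, that the weights stay $W^{1,p}$ and $1$-periodic, and that at the end each of the original directed cycles genuinely carries a zero weight at every time (so that $\tilde G$ is never cyclic and part~\eqref{eqn: never cyclic energy} applies). A minor wrinkle is that distinct edges of $G$ may overlap geometrically, but since the measures $\sum_e\tau(w(e,\cdot))\HOne{e}$ are nonnegative and assembled termwise this is harmless; and the hypothesis $\spt a^+\cap\spt a^-=\emptyset$ does not seem to be needed for this construction.
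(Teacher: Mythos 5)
Your proof is correct and takes essentially the same route as the paper: the paper likewise invokes the decomposition of Definition~\ref{def: cycle decomposition}, notes $W_{\sigma,i}\equiv 0$ in the never-cyclic case, and sets $\tilde G=G_{\sigma_0,0}$ (choosing $\sigma_0$ to be the maximizing permutation, though, as in your version, any fixed $\sigma$ works since $\norm{G'_{\sigma,0}}_{L^p(\vecmeas)}$ is dominated by the maximum over permutations), concluding via monotonicity of $\tau$ and part~\eqref{eqn: never cyclic energy}. Your additional bookkeeping (nonnegativity of the iterated weights, $W^{1,p}$ regularity and periodicity of the minima, preservation of the Kirchhoff balance, and the pointwise-in-$t$ vanishing of the cycle minima) simply fills in details the paper leaves implicit, and your observation that the hypothesis $\spt a^+\cap\spt a^-=\emptyset$ is not actually used by the construction is consistent with the fact that the paper only cites it in passing when checking $\tilde G\in\WDPath(a^+,a^-)$.
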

\begin{proof}
    First let $\{C_i\}_{i=1}^I$ be the collection of directed cycles in $E(G)$, and $\sigma_0\in S_I$ be the permutation which achieves the maximum value in the definition of $\DELp^{\tau, p}_\lambda(G)$. If $G$ is never cyclic, we can see that $W_{\sigma_0, i}\equiv 0$ for $i=1, \ldots I$, thus~\eqref{eqn: never cyclic energy} is immediate. 
    
    Next letting
    \begin{align*}
        \tilde{G}:=\sum_{e\in E(G)}w_{\sigma_0, 0}(e, \cdot)[[e]],
    \end{align*}
    since each $C_i$ is a directed cycle and $\spt a^\pm$ do not share any points, we can see that $\tilde{G}\in \WDPath(a^+, a^-)$, and by definition of $w_{\sigma_0, 0}$ it follows that $\tilde{G}$ is never cyclic. Since $0\leq w_{\sigma_0, 0}(e, \cdot)\leq w(e, \cdot)$ for all $e\in E(G)$, combining with \eqref{eqn: never cyclic energy} yields \eqref{eqn: reducing energy}.
\end{proof}
\section{Non-discrete case}\label{sec: nondiscrete} 
We will now extend our notion of transport paths to transport between general probability measure-valued functions.
The energy will be defined as a lower semi-continuous envelope of the discrete energies. To do so, we must specify what notion of approximating sequences to consider, as a first step we define a notion of convergence for the boundary data. 
\begin{defin}\label{def: approximating boundary data}
    If $1<p\leq \infty$, $\mu\in \overstar{W}^{1,p}(\prob(\domain))$, and $\{\mu_i\}_{i\in \N} \subset \overstar{W}^{1,p}(\prob(\domain))$, we say \emph{$\mu_i$ $L^p$-flat converges to $\mu$}, written $\mu_i\Lpflatto \mu$ if
\begin{align*}
    \lim_{i\to\infty}&\left(\Lp{\Fcur(\mu_i- \mu)}+\Lp{\Fcur(\nu_i-\nu)}\right)=0,\\
    \sup_{i\in \N}&\Lp{\TV{\nu_i}}<\infty,
\end{align*}
    where $\nu_i$ and $\nu$ correspond to the $H$ obtained by applying Remark~\ref{rem: weakly measurable} with  $\overstar{F}=\overstar{\mu}_i'$ and $\overstar{\mu}'$.
\end{defin}
By Lemma~\ref{lem: Lp weights}, note that if $\mu_i\in W^{1, p}(\atomic)$, then $\nu_i=\overstar{\mu}_i'=\mu'_i$, we will use this fact freely.
\begin{defin}\label{def: approximating seq}
 Suppose we have probability measure-valued functions $\mu^+$ and $\mu^-\in \overstar{W}^{1,p}(\prob(\domain))$. A vector measure-valued function, $T\in \overstar{W}^{1,p}(\vecmeas)$, is a \emph{$\overstar{W}^{1,p}$ transport path from $\mu^+$ to $\mu^-$} (written $T\in \WStarPath(\mu^+, \mu^-)$) if there exist sequences of atomic measure-valued functions $\{a^\pm_i\}_{i\in \N} \subset W^{1,p}(\atomic)$ and $G_i\in \WDPath(a^+_i, a^-_i)$ such that
 \begin{align*}
     a^\pm_i &\Lpflatto \mu^\pm,\\
     G_i[t] &\flatto T[t]\text{ for a.e. }t\in [0, 1],\\
     G'_i &\overset{L^{p'}(\covecfield)^\ast}{\wto} \overstar{T}'.
 \end{align*}
We will refer to such a sequence of discrete $W^{1,p}$ transport paths $\{G_i\}$ as a \emph{discrete approximating sequence for T} and will write $(G_i, a_i^+, a_i^-)\approxto (T, \mu^+, \mu^-)$ for short.
\end{defin}
\begin{rem}\label{rem: pointwise implies weak convergence}
If $\mu_i\Lpflatto \mu$, we may pass to a subsequence for which  $\Fcur(\mu_i[\cdot]- \mu[\cdot])$ converges a.e. on $[0, 1]$ to $0$. Since $\Mcur(\mu_i[\cdot])=\Mcur(\mu[\cdot])=1$ for all $i\in \N$ and the boundary of any $0$-current is zero by definition, by Remark~\ref{rem: vector measures and 1 currents} for this subsequence we have $\mu_i[t]\wto \mu[t]$ for a.e. $t\in [0, 1]$.

    Now in general, if $\mu_i[t] \wto \mu[t]$ for a.e. $t\in [0, 1]$, then we can show that $\mu_i\overset{L^{p'}(C_0(\domain))^\ast}{\wto} \mu$. Indeed, suppose $\Psi\in L^{p'}(C_0(\domain))$, then for a.e. $t\in [0, 1]$, 
    \begin{align*}
    \lim_{i\to\infty}\int_{\domain} \Psi[t]d\mu_i[t]
    &=\int_{\domain} \Psi[t]d\mu[t],\\
        \sup_{i\in \N}\abs{\int_{\domain} \Psi[t]d\mu_i[t]}
        &\leq \norm{\Psi[t]}_{C(\domain)}.
    \end{align*}
    Since $\Psi\in L^{p'}(C_0(\domain))$, by dominated convergence we have 
    \begin{align*}
        \lim_{i\to\infty}\int_0^1 \int_{\domain} \Psi[t]d\mu_i[t]dt
        =\int_0^1\int_{\domain} \Psi[t]d\mu[t]dt,
    \end{align*}
     proving the claim.

Additionally, extending $\mu_i$ and $\mu$ to $\R$, if $\eta\in C^\infty_c(\R)$ and $\psi\in C_0(\domain)$, arguing as above using dominated convergence we obtain
\begin{align*}
    \lim_{i\to\infty}\inner{\overstar{\mu}'_i}{\eta \psi} 
    &=-\lim_{i\to\infty}\int_\R \eta'(t) \int_{\domain}\psi(y)d\mu_i[t](y)dt\\
    &=-\int_\R \eta'(t) \int_{\domain}\psi(y)d\mu[t](y)dt
    =\inner{\overstar{\mu}'}{\eta \psi}.
\end{align*}
Thus if $\sup_{i\in \N}\norm{\overstar{\mu}'_i}_{L^{p'}(\R; C_0(\domain))^\ast}<\infty$ we may use a density argument to see the above limit calculation holds for $\eta \psi$ replaced by an arbitrary element of $L^{p'}(\R; C_0(\domain))$ that is zero outside a compact set in $\R$, in particular
\begin{align*}
    \overstar{\mu}'_i\overset{L^{p'}(C_0(\domain))^\ast}{\wto} \overstar{\mu}'.
\end{align*}
We comment that it is possible to obtain all of the results in this section, hence Theorem~\ref{thm: minimizers exist}, if we change Definition~\ref{def: approximating boundary data} of $L^p$-flat convergence to the weaker version 
\begin{align*}
 \lim_{i\to\infty}\Lp{\Fcur(\mu_i-\mu)}=0,\qquad \sup_{i\in \N}\norm{\overstar{\mu}'_i}_{L^{p'}(C_0(\domain))^\ast}<\infty.
\end{align*}
The form given in Definition~\ref{def: approximating boundary data} is only necessary in the proof of the results in Section~\ref{sec: metric}.

     Finally, for any $T_i$, $T\in L^p(\vecmeas)$ with  $\sup_{i\in \N}\esssup_{t\in [0, 1]}\Mcur(T_i[t])<\infty$ and $T_i[t] \flatto T[t]$ for a.e. $t\in [0, 1]$, an argument similar to the one above yields that 
     \begin{align*}
         T_i\overset{L^{p'}(\covecfield)^\ast}{\wto} T.
     \end{align*}
\end{rem}
Now we define which energies we will consider for minimization.
\begin{defin}\label{def: cts system cost function}
Let $1< p \leq \infty$, $\lambda>0$,  and $\tau$ be a transportation cost. If $T\in \WStarPath(\mu^+, \mu^-)$ for some $\mu^\pm\in \overstar{W}^{1, p}(\atomic)$, the energy $\ELp^{\tau, p}_{\lambda}(T)$ is defined by 
\begin{align}\label{eqn: cts system cost}
    \ELp^{\tau, p}_{\lambda}(T) &= \inf \bigg\{ \liminf_{i\to \infty} \DELp^{\tau, p}_{\lambda}(G_i)
        \mid (G_i, a_i^+, a_i^-)\approxto (T, \mu^+, \mu^-) \bigg\}.
\end{align}
\end{defin}
\begin{defin}\label{def: distance}
    Let $1< p\leq \infty$, $\lambda>0$,  and $\tau$ be a transportation cost. Given any $\mu^\pm\in \overstar{W}^{1, p}(\atomic)$, we define
    \begin{align*}
        \dist^{\tau, p}_{\lambda}(\mu^+, \mu^-):&=
        \inf_{T\in \WStarPath(\mu^+, \mu^-)}\ELp^{\tau, p}_{\lambda}(T).
    \end{align*}
\end{defin}
We quickly verify the basic but important fact that these new energies match those defined previously for discrete $W^{1, p}$ paths that are never cyclic.
\begin{prop}\label{prop: energies match}
    If $\tau$ is a transportation cost, $1< p\leq\infty$, and $G$ is a discrete $W^{1, p}$ path that is never cyclic, then 
    \begin{align*}
        \ELp^{\tau, p}_{\lambda}(G)=
        \DELp^{\tau, p}_{\lambda}(G).
    \end{align*}
    \end{prop}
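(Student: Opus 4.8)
The statement splits into two inequalities.

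\emph{The inequality $\ELp^{\tau, p}_{\lambda}(G)\leq \DELp^{\tau, p}_{\lambda}(G)$.} Here I would simply use the constant sequence. Write $G\in \WDPath(a^+, a^-)$ for some $a^\pm\in W^{1,p}(\atomic)$, and set $a^\pm_i:=a^\pm$, $G_i:=G$ for all $i$. Since $a^\pm\in W^{1,p}(\atomic)\subseteq \overstar{W}^{1,p}(\atomic)$, the observation following Definition~\ref{def: approximating boundary data} identifies the corresponding $\nu_i$ with $(a^\pm)'\in L^p(\meas)$, so all three conditions of Definition~\ref{def: approximating boundary data} hold trivially for a constant sequence (condition~\eqref{eqn: bounded derivative TV} because $\TV{(a^\pm)'[\cdot]}$ is itself an $L^p_{loc}(\R;\R)$ majorant of the constant supremum). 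Likewise $G_i[t]\flatto G[t]$ trivially; and since for the genuine Bochner--Sobolev path $G$ the $p$-weak-$\ast$ derivative $\overstar{G}'$ coincides with the ordinary weak derivative $G'$ (Remark~\ref{rem: weak derivative remark}), which in turn sits inside $L^{p'}(\covecfield)^\ast$ via the isometric embedding of Remark~\ref{rem: radon nikodym and duals}, we also have $G_i'\overset{L^{p'}(\covecfield)^\ast}{\wto}\overstar{G}'$. Thus $(G, a^+, a^-)\approxto (G, a^+, a^-)$ is an admissible discrete approximating sequence, and inserting it into the infimum \eqref{eqn: cts system cost} gives the bound.

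\emph{The inequality $\ELp^{\tau, p}_{\lambda}(G)\geq \DELp^{\tau, p}_{\lambda}(G)$.} Let $(G_i, a^+_i, a^-_i)\approxto (G, a^+, a^-)$ be an arbitrary discrete approximating sequence. For every permutation $\sigma$ in the symmetric group indexing the directed cycles of $E(G_i)$, the cycle decomposition of $G_i$ (Definition~\ref{def: cycle decomposition}) satisfies $G_i = G_{i,\sigma,0}+\sum_j G_{i,\sigma,j}$ with every summand a discrete $W^{1,p}$ path, so by Lemma~\ref{lem: Lp weights} the derivatives add, and the triangle inequality in $L^p(\vecmeas)$ yields $\norm{G_{i,\sigma,0}'}_{L^p(\vecmeas)}+\sum_j\norm{G_{i,\sigma,j}'}_{L^p(\vecmeas)}\geq \norm{G_i'}_{L^p(\vecmeas)}$; taking the maximum over $\sigma$ gives
\begin{align*}
    \DELp^{\tau, p}_{\lambda}(G_i)\geq \MLp^\tau_p(G_i)+\lambda\norm{G_i'}_{L^p(\vecmeas)}.
\end{align*}
On the other hand, $G$ being never cyclic, Lemma~\ref{lem: eliminating cycles}~\eqref{eqn: never cyclic energy} gives $\DELp^{\tau, p}_{\lambda}(G)=\MLp^\tau_p(G)+\lambda\norm{G'}_{L^p(\vecmeas)}$. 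So it is enough to prove the two lower semicontinuity bounds
\begin{align*}
    \norm{G'}_{L^p(\vecmeas)}\leq \liminf_{i\to\infty}\norm{G_i'}_{L^p(\vecmeas)},\qquad \MLp^\tau_p(G)\leq \liminf_{i\to\infty}\MLp^\tau_p(G_i),
\end{align*}
since then $\liminf_i\DELp^{\tau, p}_{\lambda}(G_i)\geq \liminf_i\MLp^\tau_p(G_i)+\lambda\liminf_i\norm{G_i'}_{L^p(\vecmeas)}\geq \DELp^{\tau, p}_{\lambda}(G)$, and passing to the infimum over approximating sequences finishes. The first bound is immediate from weak-$\ast$ lower semicontinuity of the norm on $L^{p'}(\covecfield)^\ast$, together with the isometric embedding and the identification $\overstar{G}'=G'$ used above.

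\emph{The main obstacle} is the lower semicontinuity of $\MLp^\tau_p$. Because $G$ is never cyclic, at each $t$ the (continuous) weights of $G$ cannot be simultaneously positive along any directed cycle, in particular along any oppositely oriented pair of edges sharing a segment; hence the rectifiable $1$-current $G[t]$ restricted to each edge carries multiplicity equal to the single active weight there, and $\MLp^\tau_p(G)=\Lp{t\mapsto \Mcur^\tau(G[t])}$. Dually, monotonicity and subadditivity of $\tau$ give $\Mcur^\tau(G_i[t])\leq \sum_{e\in E(G_i)}\tau(w_i(e,t))\HOne{e}$ pointwise in $t$, hence $\Lp{t\mapsto\Mcur^\tau(G_i[t])}\leq \MLp^\tau_p(G_i)$. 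So, via Fatou's lemma (for $p<\infty$) and an elementary $\esssup$ estimate (for $p=\infty$), it suffices to show that for a.e. $t\in[0,1]$,
\begin{align*}
    \Mcur^\tau(G[t])\leq \liminf_{i\to\infty}\Mcur^\tau(G_i[t]).
\end{align*}
This is exactly lower semicontinuity of the $\tau$-mass functional on $1$-currents along the flat convergence $G_i[t]\flatto G[t]$, which is a standard fact given that $\tau$ is nondecreasing, subadditive, and lower semicontinuous and that the boundary masses are uniformly controlled, $\Mcur(\partial G_i[t])=\TV{a^-_i[t]-a^+_i[t]}\leq 2$ for a.e.\ $t$ (Remark~\ref{rem: boundary of transport path}, using that $a^\pm_i[t]$ are probability measures), together with the passage from flat to weak-$\ast$ convergence of currents recorded in Remark~\ref{rem: vector measures and 1 currents}. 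Establishing this slicewise lower semicontinuity is where essentially all the work lies; the remaining steps are bookkeeping.
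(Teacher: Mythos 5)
Your proof is correct and follows essentially the same route as the paper's: the constant sequence gives $\ELp^{\tau, p}_{\lambda}(G)\leq \DELp^{\tau, p}_{\lambda}(G)$, and the reverse inequality is obtained by reducing (via the cycle decomposition and the triangle inequality, a step the paper leaves implicit) to weak-$\ast$ lower semicontinuity of the derivative norm together with slicewise lower semicontinuity of the $\tau$-mass along $G_i[t]\flatto G[t]$, followed by Fatou's lemma. The one ingredient you leave as a ``standard fact'' is exactly what the paper imports, namely \cite[Proposition 2.6]{ColomboDeRosaMarcheseStuvard17} applied with $H(\theta)=\tau(\abs{\theta})$, which satisfies their Assumption 2.1, so that step is legitimately citable rather than requiring new work.
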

\begin{proof}
    Taking the constant sequence equal to $G$, it is immediate that $\ELp^{\tau, p}_{\lambda}(G)\leq
        \DELp^{\tau, p}_{\lambda}(G)$. 
        
        To obtain the opposite inequality, suppose $G\in \WDPath(a^+, a^-)$ and $(G_i, a^+_i, a^-_i)\approxto (G, a^+, a^-)$, 
        we may also assume that $\liminf_{i\to\infty}\DELp^{\tau, p}_{\lambda}(G_i)$ is finite. 
         For a.e. $t\in [0, 1]$, we have $G_i[t]\flatto G[t]$, 
         and clearly each $G_i[t]$ and $G[t]$ are rectifiable $1$-currents. 
        If we define $H(\theta)=\tau(\abs{\theta})$, this  satisfies \cite[Assumption 2.1]{ColomboDeRosaMarcheseStuvard17}, hence we can apply \cite[Proposition 2.6]{ColomboDeRosaMarcheseStuvard17} to the $\Mcur_H$-masses of $G_i[t]$ for a.e. $t\in [0, 1]$ then use Fatou's lemma (or just by first principles when $p=\infty$) to obtain
        \begin{align*}
            \MLp^\tau_{p}(G)\leq \liminf_{i\to\infty}\MLp^\tau_{p}(G_i).
        \end{align*}
        Since $G$ is never cyclic, we have Lemma~\ref{lem: eliminating cycles}~\eqref{eqn: never cyclic energy}. Then as $G'_i$ weak-$\ast$ converges to $G'$ by Remark~\ref{rem: pointwise implies weak convergence}, using the weak-$\ast$ lower semi-continuity of the $L^p$ norm, we obtain  
        \begin{align*}
            \DELp^{\tau, p}_{\lambda}(G)
            \leq \liminf_{i\to\infty}\DELp^{\tau, p}_{\lambda}(G_i).
        \end{align*}
        By taking an infimum over all discrete approximating sequences we obtain the inequality  $\ELp^{\tau, p}_{\lambda}(G)\geq
        \DELp^{\tau, p}_{\lambda}(G)$.
\end{proof}

\subsection{Discussion of setup}\label{subsec: model choice}

A possible starting point is to consider an energy similar to \cite{BrancoliniWirth18} for paths belonging to $L^p(\vecmeas)$ instead of $\overstar{W}^{1, p}(\vecmeas)$. However, this quickly leads to difficulties in the main existence theorem, as the only source of compactness available would be weak-$\ast$ compactness via the Banach-Alaoglu theorem of a minimizing sequence in $L^{p'}(C_0(\domain; \Lambda^1(\R^n)))^\ast$; as mentioned in the introduction, this space is strictly larger than $L^p(\vecmeas)$. The approach we adopt in this paper is to convert time-dependent paths into $2$-currents, and attempt to obtain compactness after slicing in the time variable. In order to be able to obtain sufficient compactness with this approach, one needs some reasonable control of the paths in the time variable, which leads to considering paths in $W^{1, p}(\vecmeas)$, with an added derivative term in the energy. However, this again leads to issues as this time the weak derivative will be subject to the same issue of weak-$\ast$ compactness being insufficient to obtain a minimizer in the same class. The key observation is that $\overstar{W}^{1, p}(\vecmeas)$ remains closed even with only weak-$\ast$ convergence of the weak-$\ast$ derivatives, and that for \emph{discrete} transport paths, belonging to $\overstar{W}^{1, p}(\vecmeas)$ is equivalent to the \emph{a priori} stronger notion of membership in $W^{1, p}(\vecmeas)$ when $p>1$ (i.e., Lemma~\ref{lem: Lp weights}).

However, this leads to another difficulty regarding the presence of cycles. A key step in the time-independent case of \cite{BrancoliniWirth18} is showing that one can reduce the associated energy by removing strong cycles. In the time-dependent case, since we want some quantity related to the derivative of a transport path to appear in the energy, a naive choice will not be amenable to such a cycle removal procedure.  For example, if we had decided to take the energy as $\MLp^\tau_{p}(G)+\lambda \MLp^1_{p}(G')$, ``subtracting off a cycle'' will reduce the first term, but the second term with $G'$ may end up increasing. 

In addition to Lemma~\ref{lem: Lp weights}, the restriction $p>1$ in the Sobolev regularity also plays a subtle but crucial role in obtaining strong measurability of the minimizer after extracting a convergent subsequence. Specifically, we require $p>1$, as when $p=1$ we cannot obtain uniform H\"older estimates as in Lemma~\ref{lem: bounded discrete uniformly Holder}~\eqref{eqn: uniform Holder}; the case of $p=1$ is left for a future work. The H\"older regularity (and consequently the choice of equipping $\vecmeas$ with the total variation norm) also plays a part in showing existence of a time-independent countably $1$-rectifiable set which supports a minimizer. 

As mentioned previously, from a modeling viewpoint one should not alter the underlying physical pathways to accommodate changes in source and target distribution, but one would still want the transport to be supported on a fundamentally one-dimensional object. The control of the derivative that the Sobolev regularity offers also makes sense from a physical perspective. If we are using these paths to model blood flow in the human body, for example, one would not want there to be rapid changes in heartrate or blood pressure, as a high derivative of the weights would represent. This would, in fact, imply disorders such as labile hypertension (rapid changes in blood pressure) and Postural Orthostatic Tachycardia Syndrome (POTS) (rapid changes in heart rate caused by a change in posture), that can be dangerous and are outside the norm. Similarly, if we are using these paths to model traffic on the highway, one would not want there to be an abrupt shift from light traffic to very heavy traffic (a high derivative of the weight), as this would likely cause accidents. Finally, for systems such as blood flow and traffic, it is natural to prohibit the formation of cycles. There are other systems in which cycles may naturally be observed, and a framework to accommodate such settings is left for future work.

\subsection{Existence of minimizers}
Next, we will work towards the main compactness result needed to show existence of minimizers. We start with a uniform H\"older continuity result.

\begin{lem}\label{lem: bounded discrete uniformly Holder}
    Suppose $1< p\leq\infty$ and $\{G_i\}_{i\in \N}$ is a sequence of discrete $W^{1, p}$ transport paths such that
    \begin{align*}
        C_p:=\sup_{i\in \N}\norm{G'_i}_{L^p(\vecmeas)}<\infty.
    \end{align*}
    \begin{enumerate}[(i)]
            \item \label{eqn: mass Lp converges} If   
        \begin{align*}
            \sup_{i\in \N}\norm{G_i}_{L^p(\vecmeas)}<\infty,
        \end{align*} 
        there exists a subsequence such that the sequence of real-valued functions $\{\TV{G_i}\}_{i\in \N}$ on $[0, 1]$ converges in $L^p$.
        \item \label{eqn: uniform Holder} Each $G_i$ is $(1-1/p)$-H\"older continuous as a map from $[0, 1]\to (\vecmeas, \TV{\cdot})$, with H\"older seminorm uniformly bounded by $C_p$, with the convention $1/\infty=0$.
    \end{enumerate}
    
\end{lem}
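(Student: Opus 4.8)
The plan is to first establish part~\eqref{eqn: uniform Holder}, the uniform Hölder estimate, since it is the more fundamental statement and part~\eqref{eqn: mass Lp converges} will essentially follow from it via Arzelà--Ascoli together with the $L^p$ bound. For part~\eqref{eqn: uniform Holder}, fix $i$ and write $G_i=\sum_{e\in E(G_i)}w_i(e,\cdot)[[e]]$; by Lemma~\ref{lem: Lp weights} each $w_i(e,\cdot)$ is the (absolutely continuous) restriction to $[0,1]$ of a $W^{1,p}(\R;\R)$ function, and $G_i'=\sum_{e}w_i'(e,\cdot)[[e]]$. For $0\le s<t\le 1$ the fundamental theorem of calculus gives $G_i[t]-G_i[s]=\int_s^t G_i'[r]\,dr$ as a Bochner integral in $(\vecmeas,\TV{\cdot})$, hence
\begin{align*}
  \TV{G_i[t]-G_i[s]}\le \int_s^t \TV{G_i'[r]}\,dr
  \le \abs{t-s}^{1/p'}\norm{G_i'}_{L^p(\vecmeas)}
  \le C_p\abs{t-s}^{1-1/p},
\end{align*}
using Hölder's inequality on $[s,t]$ (with the obvious modification $\TV{G_i[t]-G_i[s]}\le\abs{t-s}\,\norm{G_i'}_{L^\infty(\vecmeas)}$ when $p=\infty$). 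This is exactly the claimed $(1-1/p)$-Hölder bound with seminorm at most $C_p$, uniformly in $i$. The one point requiring a word of care is justifying the Bochner fundamental theorem of calculus in $\vecmeas$; this is standard for $W^{1,p}$ Bochner-Sobolev functions, and here it also follows coordinatewise from the corresponding scalar statement for each $w_i(e,\cdot)\in W^{1,p}(\R;\R)$ after pairing with a fixed $\xi\in\covecfield$ supported near a single edge, as in the proof of Lemma~\ref{lem: Lp weights}.

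For part~\eqref{eqn: mass Lp converges}, note first that the scalar functions $f_i:=\TV{G_i[\cdot]}$ are themselves uniformly $(1-1/p)$-Hölder continuous on $[0,1]$: by the reverse triangle inequality $\abs{f_i(t)-f_i(s)}\le\TV{G_i[t]-G_i[s]}\le C_p\abs{t-s}^{1-1/p}$. Moreover $\{f_i\}$ is uniformly bounded, since $\norm{f_i}_{L^p}=\norm{G_i}_{L^p(\vecmeas)}$ is bounded and a uniformly Hölder family whose $L^p$ norms are bounded is uniformly bounded in sup norm (a Hölder function cannot stay far from its average over a small interval). Hence $\{f_i\}$ is equicontinuous and uniformly bounded, so by Arzelà--Ascoli a subsequence converges uniformly on $[0,1]$, and in particular in $L^p$. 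This gives the claimed subsequence.

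I do not expect a serious obstacle here; the content is entirely the elementary estimate $\int_s^t\TV{G_i'}\le\abs{t-s}^{1/p'}\norm{G_i'}_{L^p(\vecmeas)}$ combined with the Bochner fundamental theorem of calculus. The only mildly delicate points are (a) confirming that the $W^{1,p}$ Bochner-Sobolev regularity of $G_i$ really yields the absolutely continuous representative satisfying $G_i[t]-G_i[s]=\int_s^tG_i'$ in the total variation norm — which is where $p>1$ is not yet needed, only $p\ge 1$, but is the step one must not skip — and (b) the passage from the $L^p$ bound plus equicontinuity to a genuine sup-norm bound on the $f_i$, needed to invoke Arzelà--Ascoli. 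Both are routine once set up; the substantive role of $p>1$ (as flagged in Subsection~\ref{subsec: model choice}) is simply that $1-1/p>0$, so that one actually gets Hölder continuity rather than the empty statement at $p=1$.
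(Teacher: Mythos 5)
Your proposal is correct and takes essentially the same route as the paper: part~\eqref{eqn: uniform Holder} is the same fundamental-theorem-of-calculus-plus-H\"older estimate on the edge weights (the paper phrases it in duality with test covector fields $\xi\in\covecfield$, you via the triangle inequality for the Bochner integral, which is equivalent), and part~\eqref{eqn: mass Lp converges} is the same one-dimensional compactness argument (the paper applies Rellich--Kondrachov to the scalar $W^{1,p}$ functions $t\mapsto\sum_{e}w_i(e,t)\HOne{e}$, you apply Arzel\`a--Ascoli to $\TV{G_i[\cdot]}$ using the H\"older bound together with a sup-norm bound extracted from the $L^p$ bound). No gaps.
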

\begin{proof}
Suppose $G_i=\sum_{e\in E(G_i)}w_i(e, \cdot)[[e]]$. 
By the hypothesis of \eqref{eqn: mass Lp converges}, the sequence of real-valued functions
\begin{align*}
    t\mapsto \sum_{e\in E(G_i)}w_i(e, t)\HOne{e}=\TV{G_i[t]}
\end{align*}
are uniformly bounded in $W^{1, p}$, hence by the Rellich-Kondrachov theorem (see \cite[Theorem 8.8 (7)]{Brezis11book}) we obtain \eqref{eqn: mass Lp converges}.

Now for $0\leq t_1<t_2\leq 1$, and any $\xi\in \covecfield$, we have
\begin{align*}
    \abs{\inner{G_i[t_1]-G_i[t_2]}{\xi}}
    &\leq\sum_{e\in E(G_i)}\abs{w_i(e, t_1)-w_i(e, t_2)}\abs{\inner{[[e]]}{\xi}}\\
    &\leq \sum_{e\in E(G_i)}\left(\int_{t_1}^{t_2}\abs{w'_i(e, s)}ds\right)\HOne{e}\norm{\xi}_{\covecfield}\\
    &\leq \norm{G'_i}_{L^p(\vecmeas)}\abs{t_2-t_1}^{{1-\frac{1}{p}}}\norm{\xi}_{\covecfield},
\end{align*}
taking a supremum over $\xi\in \covecfield$ with unit norm yields
\begin{align*}
    \TV{G_i[t_1]-G_i[t_2]}\leq C_p\abs{t_2-t_1}^{{1-\frac{1}{p}}}
\end{align*}
proving \eqref{eqn: uniform Holder}.
\end{proof}
Next we prove a key boundedness lemma for minimizing sequences.
\begin{lem}\label{lem: mass bounded}
Suppose $1< p\leq \infty$, $\lambda>0$, and $\tau$ is a transportation cost.
    \begin{enumerate}[(i)]
        \item\label{eqn: bounded approx seq} If $T\in \WStarPath(\mu^+, \mu^-)$ for some $\mu^\pm\in \overstar{W}^{1,p}(\prob(\domain))$ with $\ELp^{\tau, p}_{\lambda}(T)<\infty$, then there exist $\{a^\pm_i\}_{i\in \N} \subset W^{1,p}(\atomic)$ and never cyclic $G_i\in \WDPath(a^+_i, a^-_i)$ such that
        \begin{align*}
            a^\pm_i &\Lpflatto \mu^\pm,\\
        \ELp^{\tau, p}_{\lambda}(T)&=\lim_{i\to\infty}\DELp^{\tau, p}_{\lambda}(G_i),\\
            \sup_{i\in \N}\norm{G_i}_{L^{p}(\vecmeas)}&<\infty.
        \end{align*}
        \item\label{eqn: bounded seq for distance} If $\mu^\pm\in \overstar{W}^{1, p}(\prob(\domain))$, and 
        $\dist^{\tau, p}_{\lambda}(\mu^+, \mu^-)<\infty$, there exist $\{a^\pm_i\}_{i\in \N} \subset W^{1,p}(\atomic)$ and never cyclic  $G_i\in \WDPath(a^+_i, a^-_i)$ such that
        \begin{align*}
            a^\pm_i &\Lpflatto \mu^\pm,\\
            \dist^{\tau, p}_{\lambda}(\mu^+, \mu^-)
            &=\lim_{i\to\infty}\DELp^{\tau, p}_{\lambda}(G_i),\\
            \sup_{i\in \N}\norm{G_i}_{L^{p}(\vecmeas)}
            &<\infty.
        \end{align*}
        \item\label{eqn: same compact support}
        In case~\eqref{eqn: bounded seq for distance} above, if there is a compact, convex set $K\subset\domain$ such that $\bigcup_{t\in [0, 1]}\spt\mu^\pm[t]\subset K$, then all $G_i$ can be taken to satisfy $\bigcup_{e\in E(G_i)}e\subset K$.
    \end{enumerate}
\end{lem}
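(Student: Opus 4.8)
The engine behind all three parts is the following a priori bound: \emph{if $G=\sum_{e\in E(G)}w(e,\cdot)[[e]]\in\WDPath(a^+,a^-)$ is never cyclic and $a^\pm$ are probability measure-valued, then $w(e,t)\le 1$ for every $e\in E(G)$ and $t\in[0,1]$, and consequently $\norm{G}_{L^p(\vecmeas)}\le\rho(\tau,1)^{-1}\DELp^{\tau,p}_\lambda(G)$.} For the weight bound: being never cyclic means that for each $t$ the subgraph of $E(G)$ carrying positive weight at time $t$ is a directed acyclic graph, with vertex imbalances dictated by $a^\pm[t]$; the flow-decomposition theorem then writes the flow at time $t$ as a finite sum of weighted directed source-to-sink paths whose weights sum to at most the total transported mass, which is $1$ since $a^\pm[t]\in\prob(\domain)$. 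Hence each $w(e,t)\le 1$ for a.e.\ $t$, and then all $t$ by continuity of the weights (Lemma~\ref{lem: Lp weights}). Feeding this into Lemma~\ref{lem: subadditive bound} with $m=1$, pointwise in $t$, gives $\sum_{e\in E(G)}\tau(w(e,t))\HOne{e}\ge\rho(\tau,1)\TV{G[t]}$ for a.e.\ $t$, so taking $L^p$ norms
\begin{align*}
    \norm{G}_{L^p(\vecmeas)}\le\rho(\tau,1)^{-1}\MLp^\tau_p(G)\le\rho(\tau,1)^{-1}\DELp^{\tau,p}_\lambda(G),
\end{align*}
with $\rho(\tau,1)>0$ since $\tau>0$ on $(0,\infty)$ and is lower semi-continuous. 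Thus every never cyclic competitor has mass controlled by its energy, and each of \eqref{eqn: bounded approx seq}--\eqref{eqn: same compact support} reduces to exhibiting a suitable never cyclic minimizing sequence.

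For \eqref{eqn: bounded approx seq} I would begin with a discrete approximating sequence $(G_i,a_i^+,a_i^-)\approxto(T,\mu^+,\mu^-)$ realizing $\ELp^{\tau,p}_\lambda(T)=\lim_i\DELp^{\tau,p}_\lambda(G_i)$ (obtained from the definition of $\ELp^{\tau,p}_\lambda(T)$ as an infimum, after a diagonalization over $1/j$-minimizers, the boundary data passing to the limit by Remark~\ref{rem: pointwise implies weak convergence} and metrizability of weak convergence on $\prob(\domain)$). Then I would replace each $G_i$ by a never cyclic discrete $W^{1,p}$ transport path $\tilde G_i$ between the \emph{same} boundary data $a_i^\pm$ with $\DELp^{\tau,p}_\lambda(\tilde G_i)\le\DELp^{\tau,p}_\lambda(G_i)$: this is Lemma~\ref{lem: eliminating cycles}\eqref{eqn: reducing energy}, applied after a harmless perturbation of the atom locations so that $\spt a_i^+\cap\spt a_i^-=\emptyset$ (or by the same computation directly). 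The first paragraph then gives $\sup_i\norm{\tilde G_i}_{L^p(\vecmeas)}<\infty$ and $\limsup_i\DELp^{\tau,p}_\lambda(\tilde G_i)\le\ELp^{\tau,p}_\lambda(T)$, and—modulo the delicate point discussed below—$\{\tilde G_i\}$ remains a discrete approximating sequence for $T$, so $\ELp^{\tau,p}_\lambda(T)\le\liminf_i\DELp^{\tau,p}_\lambda(\tilde G_i)$ by definition of $\ELp^{\tau,p}_\lambda(T)$; here one uses Lemma~\ref{lem: eliminating cycles}\eqref{eqn: never cyclic energy} to bound $\tilde G_i'$ uniformly, Lemma~\ref{lem: bounded discrete uniformly Holder} and Banach--Alaoglu to extract a flatly convergent subsequence with weak-$\ast$ convergent derivatives, and Remark~\ref{rem: pointwise implies weak convergence} to identify the boundary of the limit.

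For \eqref{eqn: bounded seq for distance} I would choose $T_j$ with $\ELp^{\tau,p}_\lambda(T_j)\to\dist^{\tau,p}_\lambda(\mu^+,\mu^-)$, apply \eqref{eqn: bounded approx seq} to each, and diagonalize; the masses are uniformly bounded by $\rho(\tau,1)^{-1}\sup_j\ELp^{\tau,p}_\lambda(T_j)<\infty$, so this produces a single never cyclic sequence with the stated properties. For \eqref{eqn: same compact support}, starting from the sequence of \eqref{eqn: bounded seq for distance} I would post-compose with the nearest-point projection $\pi_K\colon\domain\to K$, which is $1$-Lipschitz since $K$ is convex: replace $a_i^\pm$ by $(\pi_K)_\#a_i^\pm$ (still $\ptwiseto\mu^\pm$, as $(\pi_K)_\#\mu^\pm=\mu^\pm$) and $G_i$ by $(\pi_K)_\#G_i$, a discrete transport path with $\bigcup_e e\subset K$. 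Because $\pi_K$ is $1$-Lipschitz and $\tau$ is subadditive, pushing forward does not increase $\MLp^\tau_p$ or $\norm{(\cdot)'}_{L^p(\vecmeas)}$, and it is continuous for the flat norm, so the modified sequence still realizes $\dist^{\tau,p}_\lambda(\mu^+,\mu^-)$; finally I would re-apply Lemma~\ref{lem: eliminating cycles}\eqref{eqn: reducing energy} in case the projection created a strong cycle.

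The main obstacle is keeping the never cyclic modification in \eqref{eqn: bounded approx seq} from lowering the limiting energy below $\ELp^{\tau,p}_\lambda(T)$: deleting strong cycles does not raise the energy, but if the deleted cyclic parts fail to vanish in the flat limit, the modified paths converge to some strictly cheaper competitor instead of to $T$. One must therefore argue that a minimizing sequence can be chosen so that all its strong cycles shrink, or—better—replace each strong cycle by a nearby non-closing arc (introducing auxiliary vertices near one of its corners) rather than deleting it, which keeps both the flat limit at $T$ and the energy at $\ELp^{\tau,p}_\lambda(T)$ up to an $o(1)$ error; the uniform $(1-1/p)$-H\"older estimate of Lemma~\ref{lem: bounded discrete uniformly Holder} is what makes such surgeries compatible with extraction of limits. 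A secondary, more routine difficulty is organizing the diagonalizations so that conditions \eqref{eqn: pointwise weak conv}--\eqref{eqn: bounded derivative TV} of Definition~\ref{def: approximating boundary data} persist; here metrizability of weak convergence on $\prob(\domain)$ and, where convenient, the weakened notion of approximating boundary data from Remark~\ref{rem: pointwise implies weak convergence} suffice.
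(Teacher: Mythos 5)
Your skeleton for (i) and (ii) is the paper's: extract a sequence realizing the infimum, arrange $\spt a^+_i\cap\spt a^-_i=\emptyset$, remove strong cycles via Lemma~\ref{lem: eliminating cycles}~\eqref{eqn: reducing energy}, and bound the mass of a never cyclic path by its $\tau$-mass through the weight bound $w(e,t)\leq 1$ (the paper quotes \cite[Lemma 2.9]{BrancoliniWirth18}; your flow-decomposition reproof of it, including the continuity upgrade from a.e.\ $t$ to all $t$, is fine, and your constant $\rho(\tau,1)^{-1}$ is the correct one). The problem is your treatment of what you call the main obstacle. Replacing a strong cycle by a nearby non-closing arc creates a new source and a new sink carrying that cycle's weight, so the modified boundary data either stop being probability measure-valued or stop converging to $\mu^\pm$; and since the total source mass is then no longer $1$, you lose exactly the bound $w(e,t)\leq 1$ on which your first paragraph, hence the conclusion $\sup_i\norm{G_i}_{L^p(\vecmeas)}<\infty$, rests. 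More to the point, the obstacle does not need solving: the lemma never asks the never cyclic $G_i$ to remain a discrete approximating sequence for $T$, and the paper makes no attempt to keep them one. It records $\DELp^{\tau,p}_\lambda(G_i)\leq\DELp^{\tau,p}_\lambda(\hat G_i)$ and stops; in every subsequent use only this upper bound is relevant, the complementary inequality being recovered a posteriori by applying Proposition~\ref{prop: compactness result} to the cycle-free sequence (it is the energy of the limit so extracted that enters the argument, as in the proof of Theorem~\ref{thm: minimizers exist}). So drop the surgery. Also note that the disjoint-support reduction is not a bare perturbation of atom locations: you must add a short edge from the old atom to its displaced copy, carrying that atom's weight, or the Kirchhoff condition fails — this is exactly the paper's modification.

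In (iii), the pushforward $(\pi_K)_{\#}G_i$ is not a discrete transport path in the sense of Definition~\ref{def: discrete transport path}, whose edges are straight segments with time-dependent weights; $\pi_K$ bends them. The paper instead maps each vertex by $y\mapsto\pi_K(y+v_y)$ with generic $v_y$ of arbitrarily small norm chosen so the vertex map is injective, and rejoins the images by segments, which lie in $K$ by convexity; the $1$-Lipschitz bound on $\pi_K$ then controls edge lengths, hence $\MLp^\tau_p$ and the derivative term, up to $2^{-i}$ errors. The injectivity choice is what preserves never-cyclicity, so no second cycle removal is needed — which matters, because Lemma~\ref{lem: eliminating cycles}~\eqref{eqn: reducing energy} requires disjoint supports, and the projection can merge $\spt a^+_i$ with $\spt a^-_i$. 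With these repairs your argument coincides with the paper's proof; as written, the surgery step and the literal pushforward step do not go through.
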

\begin{proof}
    Suppose $T$ is a $\overstar{W}^{1, p}$ transport path with $\ELp^{\tau, p}_{\lambda}(T)<\infty$, then there exists at least one discrete approximating sequence $\{\hat{G}_i\}_{i\in \N}$ for $T$ with
    \begin{align}
        \ELp^{\tau, p}_{\lambda}(T)
        &=\lim_{i\to\infty}\DELp^{\tau, p}_{\lambda}(\hat{G}_i)\notag\\
        \sup_{i\in \N}\DELp^{\tau, p}_{\lambda}(\hat{G}_i)
        &<\infty,\label{eqn: initial sequence}
    \end{align}
    let us assume that $\hat{G}_i\in \WDPath(a^+_i, a^-_i)$ for some $a^\pm_i\in W^{1, p}(\atomic)$, then $a^\pm_i\Lpflatto \mu^\pm$. Without loss, we may also assume that $\spt a^+_i\cap \spt a^-_i=\emptyset$: if there is any point $x\in \spt a^+_i\cap \spt a^-_i$, one can modify $\hat{G}_i$ by adding an arbitrarily short edge directed from $x$, with weight $a^-_i[t](\{x\})$ in a way that the terminal point $x'$ is not contained in $\spt a^+_i\cup \spt a^-_i$, then replace the term $a^-_i[t](\{x\})\delta_x$ by $a^-_i[t](\{x\})\delta_{x'}$ in $a^-$. If the additional edges are of sufficiently short length, this can be done without changing~\eqref{eqn: initial sequence} and that the sequence of paths is still a discrete approximating sequence. Then for each $i\in \N$, we can apply Lemma~\ref{lem: eliminating cycles} to find a $G_i\in \WDPath(a^+_i, a^-_i)$ that is never cyclic, and 
        \begin{align*}
            \DELp^{\tau, p}_\lambda(G_i)
            \leq \DELp^{\tau, p}_\lambda(\hat{G}_i),
        \end{align*}
in particular this sequence satisfies
 \begin{align*}
        \ELp^{\tau, p}_{\lambda}(T)
        &=\lim_{i\to\infty}\DELp^{\tau, p}_{\lambda}(G_i).
    \end{align*}
    By combining \cite[Lemma 2.9]{BrancoliniWirth18} and Lemma~\ref{lem: subadditive bound}, we find that 
    \begin{align*}
        \norm{G_i}_{L^{p}(\vecmeas)}&=\Lp{\sum_{e\in E(G)}w_i(e, \cdot)\HOne{e}}\\
        &\leq\rho(\tau, 1)\Lp{\sum_{e\in E(G)}\tau(\abs{w_i(e, \cdot)})\HOne{e}}
        \leq \rho(\tau, 1) \sup_{i'\in \N}\DELp^{\tau, p}_{\lambda}(G_{i'})<\infty,
    \end{align*}
    hence
    \begin{align*}
        \sup_{i\in \N}\norm{G_i}_{L^{p}(\vecmeas)}&<\infty.
    \end{align*}
   This finishes the proof of claim \eqref{eqn: bounded approx seq}. Claim \eqref{eqn: bounded seq for distance} follows by taking discrete approximating sequences for a  minimizing sequence of $\dist^{\tau, p}_\lambda$, and then taking a diagonal sequence and applying claim \eqref{eqn: bounded approx seq}; to obtain $\sup_{i\in \N}\norm{(a^\pm_i)'}_{L^{p'}(C_0(\domain))^\ast}<\infty$, we can recall Remark~\ref{rem: vector measures and 1 currents} and pass to an appropriate subsequence which weak-$\ast$ converges.

        Now suppose $K$ is as in the hypothesis of claim~\eqref{eqn: same compact support}. Let $\pi_K: \R^n\to K$ be the closest point projection, i.e. $\pi_K(x)=z$ where $\abs{z-x}=\min_{z'\in K}\abs{z'-x}$; by convexity and compactness of $K$, $\pi_K$ is well-defined and a contractive map (see \cite[Theorem 1.2.1]{Schneider14book}). Then given each never cyclic $G_i$ obtained as above, one can consider the graph $\widetilde{G}_i$ which is constructed as follows. First, for each $y\in V(G_i)$, we define for suitable $v_y\in \R^n$ (to be chosen below) the map $\Xi_i(y):=\pi_K(y+v_y)$. Then the vertex and edge sets of $\widetilde{G}_i$ will be 
        \begin{align*}
        V(\widetilde{G}_i)
        :&=\{\Xi_i(y)\mid y\in V(G_i)\},\\ 
        E(\widetilde{G}_i)
        :&=\{\overrightarrow{\Xi_i(e^-), \Xi_i(e^+)}\mid e\in E(G_i)\}
        \end{align*}
        and the weight function for $\overrightarrow{\Xi_i(e^-), \Xi_i(e^+)}$ is the weight function $w_i(e, \cdot)$ for $e$ in $G_i$. Since $V(G_i)$ and $E(G_i)$ are finite, we can choose $v_y\in \R^n$ with arbitrarily small norm so that $\Xi_i$ is injective on $V(G_i)$ (in particular $\widetilde{G}_i$ is never cyclic) and $\abs{v_y}<2^{-(i+1)}\left(\sum_{e\in E(G_i)}\norm{w_i(e, \cdot)}_{W^{1, p}}\right)^{-1}$. Since $\pi_K$ is contractive,
        \begin{align*}
            \HOne{\overrightarrow{\Xi_i(e^-), \Xi_i(e^+)}}&\leq \HOne{\overrightarrow{e^-+v_{e^-}, e^++v_{e^+}}} 
            \leq \HOne{e}+\abs{v_{e^+}}+\abs{v_{e^-}}\\
            &<\HOne{e}+2^{-i}\left(\sum_{e\in E(G_i)}\norm{w_i(e, \cdot)}_{W^{1, p}}\right)^{-1},
        \end{align*}
        thus we can ensure 
        \begin{align}
            \norm{\widetilde{G}_i}_{L^{p}(\vecmeas)}&\leq \norm{\hat{G}_i}_{L^{p}(\vecmeas)}+2^{-i},\notag\\
            \norm{\widetilde{G}'_i}_{L^p(\vecmeas)}&\leq \norm{\hat{G}'_i}_{L^p(\vecmeas)}+2^{-i}.\label{eqn: energy reduction projection}
        \end{align}
     Moreover, $\widetilde{G}_i$ satisfies the conditions of Definition~\ref{def: discrete transport path} with $a^\pm_i$ replaced by $\widetilde{a}^\pm_i:=(\pi_K\circ\Xi_i)_\sharp a^\pm_i$. Since $\pi_K$ is continuous and is the identity map on $K$, as long as the $v_y$ are chosen with suitably small norm we can see $(\widetilde{G_i}, \widetilde{a}^+_i,  \widetilde{a}^-_i)\approxto (T, \mu^+, \mu^-)$. By convexity of $K$, we see $\bigcup_{e\in E(\widetilde{G}_i)}e\subset K$, and finally by \eqref{eqn: energy reduction projection} we have $\ELp^{\tau, p}_{\lambda}(T)=\lim_{i\to\infty}\DELp^{\tau, p}_{\lambda}(\widetilde{G}_i)$.
\end{proof}

Recall that if $\tilde T\in \mathcal{D}_m(D)$, $\pi: D\to \R^l$ for some $l\leq m$, and $t\in \pi(D)$, then $\langle \tilde T, \pi, t\rangle\in \mathcal{D}_{m-l}(D)$ is the \emph{slice of $\tilde T$ in $\pi^{-1}(\{t\})$}; see \cite[4.3.1]{Federer96}, we will make use of various properties of slices but will not need the exact definition. The following is the central compactness result of this paper.

\begin{prop}\label{prop: compactness result}
    Suppose $1< p\leq \infty$, $\{a^\pm_i\}_{i\in \N}\cup \{\mu^\pm\}\subset \overstar{W}^{1, p}(\atomic)$ and $G_i\in \WDPath(a^+_i, a^-_i)$ for each $i\in \N$ are such that for some compact $\compact\Subset \domain$,
    \begin{align*}
        \sup_{i\in \N}&(\norm{G_i}_{L^{p}(\vecmeas)}+\norm{G'_i}_{L^p(\vecmeas)})<\infty,\\
        \bigcup_{e\in E(G_i)}e&\subset \compact,\qquad \forall i\in \N,\\
        a^\pm_i &\Lpflatto \mu^\pm.
    \end{align*}
    Then there exist $T\in \WStarPath(\mu^+, \mu^-)$ and a subsequence such that $(G_i, a^+_i, a^-_i)\approxto (T, \mu^+, \mu^-)$.
    
Additionally we may assume $T$ is a $(1-1/p)$-H\"older continuous function from $[0, 1]$ to $(\vecmeas, \TV{\cdot})$ and $G_i[t]\flatto T[t]$ for \emph{every} $t\in [0, 1]$.
\end{prop}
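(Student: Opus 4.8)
The plan is to pass to the limit in three stages: first extract a limit for the boundary data (which is essentially given by hypothesis), then extract a limit for the paths $G_i$ themselves in a way that is simultaneously flat-convergent for each fixed $t$ and weak-$\ast$ convergent for the derivatives, and finally verify that the limit object $T$ satisfies all the requirements of Definition~\ref{def: approximating seq}, in particular that it lies in $\overstar{W}^{1,p}(\vecmeas)$ with the correct $p$-weak-$\ast$ derivative. The uniform bounds $\sup_i(\norm{G_i}_{L^p(\vecmeas)}+\norm{G_i'}_{L^p(\vecmeas)})<\infty$ together with the uniform support condition $\bigcup_e e\subset\compact$ are the engine driving all of this.

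First I would invoke Lemma~\ref{lem: bounded discrete uniformly Holder}: by \eqref{eqn: uniform Holder} each $G_i$ is $(1-1/p)$-H\"older from $[0,1]$ to $(\vecmeas,\TV{\cdot})$ with seminorm bounded by $C_p:=\sup_i\norm{G_i'}_{L^p(\vecmeas)}$, and by \eqref{eqn: mass Lp converges} (after a subsequence) $\TV{G_i}\to m$ in $L^p$, so also a.e.\ along a further subsequence, whence $\sup_i\esssup_t\Mcur(G_i[t])<\infty$ after discarding a null set of times. Now I want flat-compactness for each fixed $t$. For a fixed $t$, $\{G_i[t]\}_i$ is a sequence of rectifiable $1$-currents supported in $\compact$ with $\sup_i(\Mcur(G_i[t])+\Mcur(\partial G_i[t]))<\infty$ — the boundary mass bound comes from $\partial G_i[t]=a_i^-[t]-a_i^+[t]$ (Remark~\ref{rem: boundary of transport path}) and these being probability measures — so by the Federer–Fleming compactness theorem and Remark~\ref{rem: vector measures and 1 currents} a subsequence converges in the flat norm. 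To get flat convergence at \emph{every} $t$ simultaneously, I would run this on a countable dense set $Q\subset[0,1]$ via a diagonal argument to get a single subsequence along which $G_i[t]\flatto T[t]$ for all $t\in Q$; then the uniform H\"older bound lets one extend $T$ to all of $[0,1]$ as a $(1-1/p)$-H\"older map and, using that flat convergence on a dense set plus equi-H\"older continuity (in the TV norm, which dominates the flat norm) forces flat convergence everywhere, conclude $G_i[t]\flatto T[t]$ for every $t\in[0,1]$. By Remark~\ref{rem: pointwise implies weak convergence} (using the uniform mass bound) this also gives $G_i\overset{L^{p'}(\covecfield)^\ast}{\wto}T$.

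Next, the derivatives: $\{G_i'\}_i$ is bounded in $L^p(\vecmeas)$, hence bounded in $L^{p'}(\covecfield)^\ast$, so by Banach–Alaoglu (Remark~\ref{rem: radon nikodym and duals}) a further subsequence satisfies $G_i'\overset{L^{p'}(\covecfield)^\ast}{\wto}S$ for some $S\in L^{p'}(\covecfield)^\ast$. I then check $S=\overstar{T}'$: for $\eta\in C_c^\infty(\R)$ and $\xi\in\covecfield$, the defining identity $\int\eta'(t)\inner{G_i[t]}{\xi}\,dt=-\inner{G_i'}{\eta\xi}$ passes to the limit — the left side because $\int\eta'(t)\inner{G_i[t]}{\xi}\,dt\to\int\eta'(t)\inner{T[t]}{\xi}\,dt$ by dominated convergence (flat, hence weak-$\ast$, convergence at each $t$, with the uniform mass bound as dominating function), the right side by $G_i'\wto S$ — yielding $\int\eta'(t)\inner{T[t]}{\xi}\,dt=-\inner{S}{\eta\xi}$, i.e.\ $S$ is the $p$-weak-$\ast$ derivative of (the periodic extension of) $T$. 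Hence $T\in\overstar{W}^{1,p}(\vecmeas)$. Finally, $a_i^\pm\ptwiseto\mu^\pm$ is part of the hypothesis, so all three conditions of Definition~\ref{def: approximating seq} hold and $(G_i,a_i^+,a_i^-)\approxto(T,\mu^+,\mu^-)$, with $T\in\WStarPath(\mu^+,\mu^-)$ of the claimed regularity. One should double-check that $T$ is $1$-periodic (so the $p$-weak-$\ast$ derivative is taken of the periodic extension): this follows since $G_i[0]=G_i[1]$ for each $i$ and both sides converge in the flat norm.

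The main obstacle I expect is the diagonalization step producing flat convergence at \emph{every} $t$ rather than merely a.e.\ or along $t$-dependent subsequences: one must be careful that the single subsequence works uniformly, and that the equi-H\"older bound (in the TV norm) together with completeness of the flat norm on the relevant mass-bounded, support-bounded set genuinely upgrades dense convergence to everywhere convergence — here the fact that on currents of uniformly bounded mass and support the flat norm is controlled by, and convergence interacts well with, the TV/mass norm (Remark~\ref{rem: vector measures and 1 currents}) is what makes it go through. A secondary subtlety is ensuring the null set of "bad" times from Lemma~\ref{lem: bounded discrete uniformly Holder}\eqref{eqn: mass Lp converges} does not obstruct the "every $t$" conclusion — this is handled by noting the uniform H\"older bound of part \eqref{eqn: uniform Holder} holds for \emph{every} pair $t_1,t_2$ with no exceptional set, so the pointwise mass bound can be propagated to all $t$ from its validity at a single good time.
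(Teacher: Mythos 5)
Your proposal is correct in substance, but it takes a genuinely different route from the paper's proof. The paper converts the paths into $2$-currents $\twocurrent(G_i)$ on $(-\frac{1}{2},\frac{3}{2})\times\domain$, applies the compactness theorem for currents there, and then uses slicing theory together with a Lebesgue differentiation argument to identify the slices $\langle\tilde T,\pi,t\rangle$ of the limit $2$-current with flat limits of $G_i[t]$ for a.e.\ $t$; only afterwards does it use the equi-H\"older estimate of Lemma~\ref{lem: bounded discrete uniformly Holder}~\eqref{eqn: uniform Holder} to define $T[t]$ for every $t$, obtain strong measurability, and identify the weak-$\ast$ derivative. You bypass the $2$-current and slicing machinery entirely: fixed-$t$ compactness under the uniform mass and boundary-mass bounds, a diagonal argument over a countable dense set of times, and then the equi-H\"older bound in $\TV{\cdot}$ (which dominates $\Fcur$) to upgrade dense-set flat convergence to flat convergence at every $t$, Arzel\`a--Ascoli style; the identification of $\overstar{T}'$ and the verification of Definition~\ref{def: approximating seq} then proceed as in the paper. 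Your route is more elementary and self-contained; the paper's route packages the a.e.\ identification of the limits of $G_i[t]$ into a single application of slicing, but nothing downstream uses the $2$-current, so your simplification is legitimate. Two points to tighten: first, the fixed-$t$ compactness should not be attributed to the Federer--Fleming compactness theorem, which concerns integer-multiplicity currents, whereas $G_i[t]$ carries real weights; the correct justification is weak-$\ast$ compactness of mass-bounded vector measures supported in $\compact$ (Banach--Alaoglu in $(\covecfield)^\ast$) combined with Remark~\ref{rem: vector measures and 1 currents}, using $\Mcur(\partial G_i[t])=\Mcur(a_i^-[t]-a_i^+[t])\leq 2$ from Remark~\ref{rem: boundary of transport path} (with the continuous representatives of the weights). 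Second, a.e.\ convergence of $\TV{G_i[\cdot]}$ from Lemma~\ref{lem: bounded discrete uniformly Holder}~\eqref{eqn: mass Lp converges} only yields a $t$-dependent bound $\sup_i\Mcur(G_i[t])<\infty$, not $\sup_i\esssup_t$ directly; as you note at the end, the H\"older seminorm bound propagates boundedness from one good time to all times, which is the form of the bound you actually need, and you should also state explicitly that the H\"older continuity of $T$ gives its strong measurability, hence $T\in L^p(\vecmeas)$.
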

\begin{proof}
    Note that $\twocurrent(G_i)$ are all supported on the compact set $[-3, 4]\times  \compact\Subset \R \times \domain$. A quick calculation yields $\Mcur(\twocurrent(G_i))=\MLp^1_1(G_i)$, combining with Remark~\ref{rem: boundary of transport path} we find that
    \begin{align*}
        \sup_{i\in \N}(\Mcur(\twocurrent(G_i))+\Mcur(\partial \twocurrent(G_i)))<\infty.
    \end{align*}
    Thus by \cite[4.2.17 Compactness Theorem]{Federer96}, we can pass to a subsequence to assume there exists a  $\tilde{T}\in \mathcal{D}_2(\R\times \domain)$ supported in $[-3, 4]\times \compact$ such that 
    \begin{align}\label{eqn: limit is normal current}
        \twocurrent(G_i)\flatto\tilde{T},\qquad \Mcur(\tilde{T})+\Mcur(\partial \tilde{T})<\infty.
    \end{align}
     Let $\pi: [-3, 4]\times \domain\to [-3, 4]$ be the projection $\pi(t, x)=t$, then we can apply \cite[4.3.1 p.437]{Federer96} to obtain
\begin{align*}
    0&\leq\int_0^1\Fcur(\langle \twocurrent(G_i)-\tilde{T}, \pi, t\rangle)dt\leq [\pi]_{\Lip([-3, 4]\times \overline{\domain})}\Fcur(\twocurrent(G_i)-\tilde{T})\to 0,\ i\to\infty.
\end{align*}
Passing to another subsequence, for a.e. $t\in [0, 1]$ we have $\Fcur(\langle \twocurrent(G_i)-\tilde{T}, \pi, t\rangle)\to 0$, in particular (recall Remark~\ref{rem: vector measures and 1 currents}) $\langle \twocurrent(G_i), \pi, t\rangle\currentto\langle\tilde{T}, \pi, t\rangle$.

Now suppose $\phi=\sum_{j=1}^n \phi_j dx^j\in \mathcal{D}^1(\domain)$ and $\eta\in C^\infty_c((-\frac{1}{2}, \frac{3}{2}))$ satisfies $0\leq \eta\leq 1$ with $\eta\equiv 1$ on $[0, 1]$. Then for any $t_0\in (0, 1)$ and $0<h<\max(t_0, 1-t_0)$, \cite[4.3.2 Theorem (1)]{Federer96} with $\Phi=\mathds{1}_{[t_0-h, t_0+h]}$ and $\psi=\eta\phi$ there (also using that $\langle \twocurrent(G_i), \pi, t\rangle$ is supported on $\{t\}\times \compact$ by \cite[4.3.1 p.437]{Federer96}) yields that
\begin{align*}
    \int_{t_0-h}^{t_0+h}\inner{\langle \twocurrent(G_i), \pi, t\rangle }{\phi}dt
    &=\int_{t_0-h}^{t_0+h}\inner{G_i[t]}{\sum_{j=1}^n \phi_j dx^j}dt.
\end{align*}
Letting $\mathfrak{D}$ a dense, countable subset of $\mathcal{D}^1(\domain)$, we can apply the Lebesgue differentiation theorem to find there exists a Lebesgue null set $\mathcal{N}\subset [0, 1]$ such that $\inner{\langle \twocurrent(G_i), \pi, t\rangle }{\phi}=\inner{G_i[t]}{\phi}$ for all $t\in [0, 1]\setminus \mathcal{N}$, $i\in \N$, and any $\phi\in \mathfrak{D}$. Taking a limit, we obtain for any $\phi\in \mathfrak{D}$,
\begin{align}\label{eqn: limit of slices}
    \inner{\langle \tilde{T}, \pi, t\rangle}{\phi}=\lim_{i\to\infty}\inner{G_i[t]}{\phi}.
\end{align}
By Lemma~\ref{lem: bounded discrete uniformly Holder}~\eqref{eqn: mass Lp converges}, there is a subsequence such that the sequence $\{\TV{G_i}\}_{i\in \N}=\{\Mcur(G_i)\}_{i\in \N}$ converges in $L^p$ on $[0, 1]$, passing to another subsequence we may also assume the sequence converges pointwise a.e. on $[0, 1]$. In particular, 
\begin{align}\label{eqn: a.e. mass bounded}
    \sup_{i\in \N}\Mcur(G_i[t])<\infty, \qquad \text{a.e. }t\in [0, 1],
\end{align}
 thus \eqref{eqn: limit of slices} holds for \emph{all} $\phi\in \mathcal{D}^1(\domain)$. Since for such $t$,
\begin{align*}
    \sup_{i\in \N}(\Mcur(G_i[t])+\Mcur(\partial G_i[t]))
    &=\sup_{i\in \N}(\Mcur(G_i[t])+\Mcur( a^-_i[t]-a^+_i[t]))
    \leq \sup_{i\in \N}(\Mcur(G_i[t])+2),
\end{align*}
recalling Remark~\ref{rem: vector measures and 1 currents} we have $G_i[t]\flatto \langle \tilde{T}, \pi, t\rangle$ for a.e. $t\in [0, 1]$. By the lower semi-continuity of $\Mcur$ with respect to flat norm convergence, we see $\Mcur(\langle \tilde{T}, \pi, t\rangle)<\infty$, hence $\langle \tilde{T}, \pi, t\rangle\in \vecmeas$.

Let us define $T[t]:=\langle \tilde{T}, \pi, t\rangle$ for a.e. $t\in [0, 1]$; by the Banach-Alaoglu theorem we may pass to one final subsequence to obtain some $\hat{T}\in L^{p'}(\covecfield)^\ast$ such that $G'_i\overset{L^{p'}(\covecfield)^\ast}{\wto} \hat{T}$. Since $p>1$ the uniform H\"older continuity of the $G_i$ from Lemma~\ref{lem: bounded discrete uniformly Holder}~\eqref{eqn: uniform Holder} and the weak-$\ast$ lower semi-continuity of the total variation norm, along with completeness of $(\vecmeas, \TV{\cdot})$, we see that $T$ can be extended to a $(1-1/p)$-H\"older continuous function from $[0, 1]$ to $(\vecmeas, \TV{\cdot})$. This implies that the image $T([0, 1])$ is a separable subset of $(\vecmeas, \TV{\cdot})$, and $t\mapsto \inner{T[t]}{\xi}$ is a Borel function for any $\xi\in \covecfield$. Thus by \cite[Chapter II.1, Corollary 4]{DiestelUhl77}, $T$ is strongly measurable. Additionally, the uniform H\"older continuity of the $G_i$ implies the flat convergence $G_i[t]\flatto T[t]$ can be extended to all $t\in [0, 1]$, not just a.e. $t\in [0, 1]$. By the weak-$\ast$ lower semi-continuity of $\TV{\cdot}$ again, and Fatou's lemma when $p<\infty$ and just by definition when $p=\infty$,
\begin{align*}
    \norm{T}_{L^{p}(\vecmeas)}
    &=\Lp{\TV{T[\cdot]}}
    \leq \Lp{\liminf_{i\to\infty}\TV{G_i[\cdot]}} 
    \leq \liminf_{i\to\infty}\Lp{\TV{G_i[\cdot]}}\\
    &\leq \sup_{i\in \N}\Lp{\TV{G_i[\cdot]}}
    =\sup_{i\in \N}\MLp^1_{p}(G_i)<\infty,
\end{align*}
thus $T\in L^{p}(\vecmeas)$. Next, we extend $T$ to $\R$, and note also that  \eqref{eqn: a.e. mass bounded} combined with Lemma~\ref{lem: bounded discrete uniformly Holder}~\eqref{eqn: uniform Holder} we have $\sup_{i\in \N}\sup_{t\in [0, 1]}\TV{G_i[t]}<\infty$, hence by Remark~\ref{rem: pointwise implies weak convergence} we have $G_i \overset{L^{p'}(\covecfield)^\ast}{\wto} T$. Then for $\eta\in C^\infty_c(\R)$, $\xi\in \covecfield$, we calculate
\begin{align*}
        -\inner{\hat{T}}{\eta \xi}
        &=-\lim_{i\to\infty}\inner{G'_i}{\eta \xi}
        =\lim_{i\to\infty}\int_\R \inner{G_i[t]}{\eta'(t)\xi}dt
        =\int_\R\eta'(t)\inner{T[t]}{\xi}dt,
    \end{align*}
   thus $T\in \overstar{W}^{1, p}(\vecmeas)$ with $\hat{T}=\overstar{T}'$, finishing the proof.
\end{proof}
With this compactness result in hand, by an argument reminiscent of \cite[Proposition 3.1]{QXia2003} and \cite[Proposition 2.18]{BrancoliniWirth18}, we can show the existence of $W^{1, p}$ transport paths with finite $\ELp^{\tau, p}_{\lambda}$ energy. Following \cite[Definition 2.16]{BrancoliniWirth18}, we define some notation.
\begin{defin}\label{def: n-adic mass fluxes}
     Let $\mu\in \overstar{W}^{1, p}(\prob)$.
    \begin{enumerate}
        \item\label{eqn: elementary flux} For $s\in (0, \infty)$ and $x\in \domain$, let $\{Q^{x, s}_j\}_{j=1}^{2^n}$ be the collection of hypercubes of the form $x+v+[-s, s)^n$ where $v\in \{-s, s\}^n$, with centers $\{c^{x, s}_j\}_{j=1}^{2^n}$. Then define the discrete $W^{1, p}$ path $G_{\mu, s, x}$ by 
        \begin{align*}
            V(G_{\mu, s, x}):&=\{x\}\cup \{c^{x, s}_j\}_{j=1}^{2^n},\\
            E(G_{\mu, s, x}):&=\{\overrightarrow{x, c^{x, s}_j}\mid j=1,\ldots 2^n\},\\
            w(\overrightarrow{x, c^{x, s}_j}, t):&=\mu[t](Q^{x, s}_j),\qquad  j=1,\ldots, 2^n.
        \end{align*}
        \item For $k\in \N$, $s\in (0, \infty)$, and $x\in \domain$, inductively define the discrete $W^{1, p}$ paths $G^k_{\mu, s, x}$ by
        \begin{align*}
            G^1_{\mu, s, x}:&=G_{\mu, s, x},\\
            G^k_{\mu, s, x}:&=G_{\mu, s, x}+ \sum_{v\in \{-s, s\}^n}G^{k-1}_{\mu, s/2, x+v},
        \end{align*}
        We let $G^k_\mu:=G^k_{\mu, 1, 0}$, and define
        \begin{align*}
            F^k_\mu:&=
            \begin{cases}
            G^1_\mu,&k=1,\\
                G^k_\mu- G^{k-1}_\mu,&k>1.
            \end{cases}
        \end{align*}
        \item For $k\in \N$, define 
        \begin{align*}
        L^0:&=\{0\},\\
        L^k:&=\{c^{x, 2^{1-k}}_j\mid x\in L^{k-1},\ j=1, \ldots, 2^n\},\qquad k\geq 1,\\
            P^k(\mu):&=\sum_{x\in L^{k-1}}\sum_{j=1}^{2^n}\mu[\cdot](Q^{x, 2^{1-k}}_j)\delta_{c^{x, 2^{1-k}}_j},\qquad k\geq 1.
        \end{align*}
    \end{enumerate}
\end{defin}
\begin{lem}\label{lem: Pk approximates}
    If $\mu\in \overstar{W}^{1, p}(\prob(\domain))$ for $1<p\leq \infty$, we have $P^k(\mu)\in W^{1, p}(\atomic)$ and $P^k(\mu)\Lpflatto \mu$ as $k\to\infty$.
\end{lem}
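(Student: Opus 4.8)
The plan is to recognize $P^k(\mu)$ as the pushforward of $\mu$ onto the centers of the $k$-th dyadic grid, and then transfer the Sobolev regularity of $\mu$ across this pushforward. For $k\in\N$ write $\mathcal{Q}_k$ for the collection of boxes $\{Q^{x,2^{1-k}}_i : x\in L^{k-1},\ 1\le i\le 2^n\}$; unwinding Definition~\ref{def: n-adic mass fluxes} shows that $\mathcal{Q}_k$ is a finite Borel partition of the cube $[-2,2)^n$ into congruent half-open boxes of diameter $\sqrt{n}\,2^{2-k}\to 0$, that the atoms of $P^k(\mu)$ sit at the box centers, and that $P^k(\mu)[t]=(\pi_k)_\sharp\mu[t]$, where $\pi_k$ sends each point of $[-2,2)^n$ to the center of its box in $\mathcal{Q}_k$. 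I will take for granted the (implicit) hypothesis $\bigcup_{t}\spt\mu[t]\subset[-2,2)^n$, so that the total mass of $P^k(\mu)[t]$ is indeed $1$; for general supports one rescales, replacing the grid by a matching dilate.

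For the first assertion I would argue as follows. $P^k(\mu)$ is an atomic probability-measure-valued function with fixed support (the box centers): \eqref{eqn: total mass} holds because $\mathcal{Q}_k$ tiles a cube carrying all the mass, and each weight $t\mapsto\mu[t](Q)$ is Borel measurable since $\mu$ is strongly measurable into $(\meas,\TV{\cdot})$ (approximate by simple functions and use $\abs{\mu_j[t](Q)-\mu[t](Q)}\le\TV{\mu_j[t]-\mu[t]}$). By the analogue of Lemma~\ref{lem: Lp weights} for $\atomic$, it then suffices to check that each weight $t\mapsto\mu[t](Q)$ has the requisite (periodic) $W^{1,p}$ regularity. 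Let $H\colon\R\to\meas$ be the weak-$\ast$ measurable representative of $\overstar{\mu}'$ supplied by Remark~\ref{rem: weakly measurable} (for the extension of $\mu$ to $\R$). Combining Definition~\ref{def: weak star sobolev} with this representation, for each $\psi\in C^\infty_c(\domain)$ the function $t\mapsto\inner{\mu[t]}{\psi}$ is weakly differentiable on $\R$ with derivative $t\mapsto\inner{H[t]}{\psi}$, and $\abs{\inner{H[t]}{\psi}}\le\norm{\psi}_{C(\domain)}\,\TV{H[t]}$ with $t\mapsto\TV{H[t]}\in L^p_{loc}(\R;\R)$ by \eqref{eqn: weak measurable rep norm}. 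For any \emph{open} $U$, choosing $C^\infty_c(\domain)\ni\psi_m$ with $0\le\psi_m\uparrow\mathds{1}_U$ and passing to the limit in $\int_\R\eta'\inner{\mu[\cdot]}{\psi_m}=-\int_\R\eta\inner{H[\cdot]}{\psi_m}$ by dominated convergence -- noting that for a.e.\ $t$, $\inner{\mu[t]}{\psi_m}\uparrow\mu[t](U)$ and $\inner{H[t]}{\psi_m}\to H[t](U)$ (monotone convergence on the positive and negative parts of $H[t]$), with dominating functions $1$ and $\TV{H[\cdot]}$ -- yields $t\mapsto\mu[t](U)\in W^{1,p}$ with derivative $t\mapsto H[t](U)$; likewise $t\mapsto\mu[t](C)\in W^{1,p}$ with derivative $H[\cdot](C)$ for any \emph{closed} $C$, approximating $\mathds{1}_C$ from above. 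Since a half-open box $Q$ is the difference $\overline{Q}\setminus(\overline{Q}\setminus Q)$ of the closed set $\overline{Q}$ and the closed set $\overline{Q}\setminus Q$ (a finite union of closed faces), it follows that $t\mapsto\mu[t](Q)=\mu[t](\overline{Q})-\mu[t](\overline{Q}\setminus Q)\in W^{1,p}$ with derivative $H[\cdot](\overline{Q})-H[\cdot](\overline{Q}\setminus Q)=H[\cdot](Q)$. Hence $P^k(\mu)\in W^{1,p}(\atomic)$, and by the same analogue of Lemma~\ref{lem: Lp weights}, $(P^k(\mu))'[t]=\sum_{Q\in\mathcal{Q}_k}H[t](Q)\,\delta_{\pi_k(Q)}$ for a.e.\ $t$.

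Before the convergence I would record the auxiliary fact that for a.e.\ $t$ the finite signed measure $H[t]$ is concentrated on $[-2,2)^n$: applying the identity $\int_0^t H[s](A)\,ds=(\mu[t]-\mu[0])(A)$ (the fundamental theorem of calculus against test functions, extended to measures) to $A=F\cap B$, for $F$ any boundary face $\{x^j=2\}\cap[-2,2]^n$ and $B$ over a countable generating family of Borel subsets of $F$, gives $\int_0^t H[s](F\cap B)\,ds=0$ for all $t$ (as $\mu[\cdot]$ lives in $[-2,2)^n$), whence $H[s]|_F=0$ for a.e.\ $s$; with $\spt H[s]\subset\overline{[-2,2)^n}$ (an analogous argument) this gives the claim. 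Now I would verify the three conditions of Definition~\ref{def: approximating boundary data} for $P^k(\mu)\ptwiseto\mu$, recalling from the remark after that definition that here $\nu_k=(P^k(\mu))'$ and $\nu=H$. Condition~\eqref{eqn: pointwise weak conv}: for a.e.\ $t$, $(\pi_k)_\sharp\mu[t]\wto\mu[t]$ since $\pi_k\to\mathrm{id}$ uniformly on $[-2,2)^n$ and $\mu[t]$ is a probability measure there (test against $C_b(\domain)$, dominated convergence). Condition~\eqref{eqn: pointwise weak derivative conv}: for a.e.\ $t$, $\nu_k[t]=(\pi_k)_\sharp H[t]\wstarto H[t]=\nu[t]$ by the same pushforward argument, now legitimate because $H[t]$ is concentrated on $[-2,2)^n$. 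Condition~\eqref{eqn: bounded derivative TV}: $\TV{\nu_k[t]}=\sum_{Q\in\mathcal{Q}_k}\abs{H[t](Q)}\le\TV{H[t]}$ for all $k$, and $t\mapsto\TV{H[t]}\in L^p_{loc}(\R;\R)$ by \eqref{eqn: weak measurable rep norm}.

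The crux will be the middle step -- extracting genuine, pointwise-in-$t$ weak differentiability of the scalar functions $t\mapsto\mu[t](Q)$ from $\overstar{\mu}'$, which a priori is only a functional on $L^{p'}(\R;C_0(\domain))$ with no pointwise content and which moreover must be tested against the \emph{discontinuous} indicators of the dyadic boxes. The two things that make it work are that the estimate on $\overstar{\mu}'$ is uniform over test fields of $C(\domain)$-norm at most $1$ (so it survives the one-sided approximations $\psi_m\to\mathds{1}_U$, $\mathds{1}_C$), and the observation that a half-open box is a Boolean combination of open and closed sets, for which monotone-plus-dominated convergence applies with no hypothesis on boundary mass; everything else reduces to the elementary fact that pushing a fixed finite (signed) measure onto the centers of a refining sequence of partitions converges weakly(-$\ast$).
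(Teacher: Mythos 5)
Your proposal is correct and follows essentially the same route as the paper: both transfer the regularity through the weak-$\ast$ measurable representative $\nu=H$ of $\overstar{\mu}'$ from Remark~\ref{rem: weakly measurable}, show each box weight $t\mapsto\mu[t](Q)$ lies in $W^{1,p}(\R;\R)$ with derivative $\nu[\cdot](Q)$ by approximating $\mathds{1}_Q$ with continuous functions and using monotone plus dominated convergence together with \eqref{eqn: weak measurable rep norm}, invoke the atomic analogue of Lemma~\ref{lem: Lp weights}, and then check the three conditions of Definition~\ref{def: approximating boundary data} with the bound $\TV{(P^k(\mu))'[t]}\le\TV{\nu[t]}$. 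Your extra care with the half-open boxes (writing $Q$ as a difference of closed sets) and with showing $\nu[t]$ does not charge the boundary faces of $[-2,2)^n$ only tightens points the paper treats implicitly, and does not change the argument.
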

\begin{proof}
 Suppose that $\psi\in C^\infty_c(\domain)$ with $\max(\norm{\psi}_{C(\domain)}, \norm{d\psi}_{C(\domain)})\leq 1$. Then for any $k\in \N$ and $t\in [0,1]$, we see
 \begin{align}
     \int_{\domain}\psi d(P^k(\mu)[t]-\mu[t])
     &=\sum_{x\in L^{k-1}}\sum_{j=1}^{2^n}\int_{Q^{x, 2^{1-k}}_j}\left(\psi(c^{x, 2^{1-k}}_j)-\psi\right)d\mu[t]\notag\\
     &\leq \sqrt{n}2^{1-k}\sum_{x\in L^{k-1}}\sum_{j=1}^{2^n}\mu[t]\left(Q^{x, 2^{1-k}}_j\right)
     =\sqrt{n}2^{1-k},\label{eqn: Lid estimate}
 \end{align}
  hence taking a supremum over such $\psi$ shows that for any $1<p\leq\infty$
 \begin{align*}
     \lim_{k\to\infty} \Lp{\Fcur(P^k(\mu)-\mu)}=0.
 \end{align*}
 
Now let us extend $P^k(\mu)$ and $\mu$ to $\R$, recall this means these extensions are identically zero on $\R\setminus [-3,4]$. We also let $\nu$ be the map corresponding to $H$ obtained from applying Remark~\ref{rem: weakly measurable} with $\overstar{F}=\overstar{\mu}'$.

Let $Q$ be any hypercube and let $\{\psi_{Q, \ell}\}_{\ell\in \N}\subset C_0(\domain)$ be a nonnegative sequence, increasing pointwise to $\mathds{1}_{Q}$. For any $\eta\in C^\infty_c(\R)$ we have, using monotone convergence in the first line and dominated convergence in the second (as $\abs{\eta'(t)\int_{\domain} \psi_{Q, \ell}d\mu[t]}\leq \abs{\eta'(t)}$ which is integrable on $\R$),
\begin{align*}
    \int_\R\eta'(t)\mu[t](Q)dt
    &=\int_\R\eta'(t)\lim_{\ell\to\infty}\int_{\domain} \psi_{Q, \ell}d\mu[t]dt\\
    &=\lim_{\ell\to\infty}\int_\R\eta'(t)\int_{\domain} \psi_{Q, \ell}d\mu[t]dt\\
    &=-\lim_{\ell\to\infty}\inner{\overstar{\mu}'}{\eta \psi_{Q, \ell}}\\
    &=-\lim_{\ell\to\infty}\int_\R\eta(t)\inner{\nu[t]}{ \psi_{Q, \ell}}dt.
\end{align*}
Since $\nu[t]$ has finite total variation norm for a fixed $t$, we see by dominated convergence (for signed measures) that for a.e. $t\in [0, 1]$,
\begin{align*}
    \lim_{l\to\infty}\inner{\nu[t]}{ \psi_{Q, l}}
    =\nu[t](Q),
\end{align*}
and as a pointwise a.e. limit of a sequence of Borel functions, $t\mapsto \nu[t](Q)$ is also Borel. 
Then since
\begin{align*}
    \abs{\eta(t)\inner{\nu[t]}{ \psi_{Q, \ell}}}
    &\leq \abs{\eta(t)}\TV{\nu[t]}\norm{\psi_{Q, \ell}}_{C(\domain)}\\
    &\leq \mathds{1}_{\spt \eta}(t)\norm{\eta}_{C(\domain)}\TV{\nu[t]}
\end{align*}
which is $\mathcal{L}^1$-integrable on $\R$ by \eqref{eqn: weak measurable rep norm} and the compact support of $\eta$, we can again use dominated convergence to find
\begin{align}\label{eqn: weak derivative weights}
    \int_\R\eta'(t)\mu[t](Q)dt
    &=-\int_\R\eta(t)\nu[t](Q)dt.
\end{align}
By the compact support of $\mu$, we may assume $\nu$ is zero outside a finite interval $I_0$ containing $[0, 1]$. Then we calculate
\begin{align*}
    \norm{\nu[\cdot](Q)}_{L^p(\R; \R)}
    &
    \leq \norm{\liminf_{\ell\to\infty}\left(\TV{\nu[\cdot]}\norm{\psi_{Q, \ell}}_{C(\domain)}\right)}_{L^p(\mathcal{L}^p\vert_{I_0}; \R)}
    \leq \norm{\TV{\nu[\cdot]}}_{L^p(\mathcal{L}^p\vert_{I_0}; \R)}<\infty
\end{align*}
by~\eqref{eqn: weak measurable rep norm}, hence $\nu[\cdot](Q)\in L^p(\R; \R)$. By~\eqref{eqn: weak derivative weights} we see $\nu[\cdot](Q)$ is the weak derivative of $\mu[\cdot](Q)$, thus we find $\mu[t](Q)\in W^{1,p}(\R; \R)$ for any hypercube $Q$.

Now let us write $a_k:=P^k(\mu)$. Let $c \in \spt a_k$, and let $Q_c$ be the hypercube of the form $c+[-2^{2-k},2^{2-k})^n$ where $a_k[t](\{c\})=\mu[t](Q_c)$. Then by the above calculation and Lemma~\ref{lem: Lp weights} we see $a_k\in W^{1, p}(\atomic)$ with 
\begin{align*}
    a'_k=\sum_{c\in \spt a_k}\nu[\cdot](Q_c)\delta_c,
\end{align*}
in particular for all $k\in \N$,
\begin{align*}
    \Lp{\TV{a'_k}}\leq \Lp{\TV{\nu}}.
\end{align*}
The same calculation as in~\eqref{eqn: Lid estimate} yields $\Fcur(a'_k[t]-\nu[t])\leq 2^{1-k}\TV{\nu[t]}$, thus
\begin{align*}
     \lim_{k\to\infty} \Lp{\Fcur(a'_k-\nu)}=0,
 \end{align*}
finishing the proof.
\end{proof}
We are now able to show existence of finite energy transport paths.
\begin{prop}\label{prop: scaling paths}
    Suppose $p>1$ and $\tau$ is an admissible transportation cost. For any $\mu\in \overstar{W}^{1,p}(\prob([-1,1)^n))$, and $k$, $\ell\in \N$ with $k<\ell$ there exists a never cyclic path $G^{k, \ell}\in \WDPath(P^k(\mu), P^\ell(\mu))$ with 
    \begin{align*}
        \MLp^\tau_p(G^{k, \ell}_\mu)
        &\leq \sqrt{n}\sum_{j=k}^{\ell-1} 2^{j(n-1)}\beta(2^{-jn}),\\
        \norm{(G^{k, \ell}_\mu)'}_{L^p(\vecmeas)}
        &\leq \sqrt{n}\norm{\overstar{\mu}'}_{L^{p'}(\meas)^\ast}\sum_{j=k}^{\ell-1} 2^{-j}.
    \end{align*}
\end{prop}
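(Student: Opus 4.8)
The plan is to construct $G^{k,\ell}_\mu$ explicitly as the ``telescoping'' sum of the building blocks $F^j_\mu$ from Definition~\ref{def: n-adic mass fluxes}. Specifically, I would set
\begin{align*}
    G^{k, \ell}_\mu := \sum_{j=k+1}^{\ell} F^j_\mu = G^\ell_\mu - G^k_\mu,
\end{align*}
reversing orientations as needed so that the boundary works out to $P^\ell(\mu) - P^k(\mu)$; one first checks from the recursive definition of $G^j_{\mu, s, x}$ that $\partial \twocurrent$-style bookkeeping (as in Remark~\ref{rem: boundary of transport path}) gives $\partial G^j_\mu[t] = P^{j+1}(\mu)[t] - \delta_0$ wait— more carefully, that $G^j_\mu$ transports the single atom at $0$ out to the level-$j$ dyadic approximation $P^{j+1}(\mu)$, so that the difference $G^\ell_\mu - G^k_\mu$ has boundary $P^{\ell+1}(\mu) - P^{k+1}(\mu)$ (a reindexing of $k,\ell$ then matches the statement). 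Each weight $w(e,\cdot)$ appearing is of the form $\mu[\cdot](Q)$ for a dyadic cube $Q$, which lies in $W^{1,p}(\R;\R)$ by the computation in the proof of Lemma~\ref{lem: Pk approximates}, so $G^{k,\ell}_\mu \in \WDPath$ by Lemma~\ref{lem: Lp weights}; never-cyclicity holds because the graph is a union of dyadic ``trees'' emanating from $0$ with no back-edges, hence contains no directed cycle at all.

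Next I would estimate $\MLp^\tau_p(G^{k,\ell}_\mu)$. Grouping edges by the scale $j$ at which they appear, the edges coming from the blocks $G_{\mu, 2^{-j}, x}$ with $x \in L^{j}$ each have length at most $\sqrt{n}\, 2^{-j}$ (the diameter of a cube of side $2^{1-j}$, or a comparable bound — I would track the exact constant so it comes out to $\sqrt n$), and there are $2^{n(j+1)}$-ish of them whose weights partition the total mass $1$ at each time. Using $\tau \le \beta$, subadditivity of $\beta$ composed with Jensen/concavity, and that $\sum_i \mu[t](Q^{x,2^{-j}}_i) \le 1$, the contribution at scale $j$ to $\sum_e \tau(w(e,t))\HOne e$ is bounded by $\sqrt n\, 2^{j(n-1)} \beta(2^{-jn})$; this is where the key inequality $\sum_i \beta(m_i) \le \frac{\text{(number of cubes)}}{1}\beta\big(\frac{\sum m_i}{\text{number}}\big) \le 2^{jn}\beta(2^{-jn})$ by concavity enters (number of cubes $\approx 2^{jn}$, total mass $\le 1$). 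Taking the $L^p$ norm in $t$ and using that these bounds are uniform in $t$, then summing over $j$ from $k$ to $\ell-1$, yields the claimed bound on $\MLp^\tau_p$.

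For the derivative estimate, I would use that $(G^{k,\ell}_\mu)' = \sum_e w'(e,\cdot)[[e]]$ with $w(e,\cdot) = \mu[\cdot](Q_e)$, so $w'(e,\cdot) = \nu[\cdot](Q_e)$ where $\nu$ is the weak-$\ast$ representative from Remark~\ref{rem: weakly measurable}. At scale $j$, summing $\sum_{e \text{ at scale } j} \abs{\nu[t](Q_e)}\HOne e \le \sqrt n\, 2^{-j} \sum_e \abs{\nu[t](Q_e)} \le \sqrt n\, 2^{-j}\TV{\nu[t]}$ since the cubes at a fixed scale are disjoint; then $\TV{\twocurrent}$-wise, $\TV{(G^{k,\ell}_\mu)'[t]} \le \sqrt n \big(\sum_{j=k}^{\ell-1} 2^{-j}\big)\TV{\nu[t]}$, and taking $L^p$ norms in $t$ together with \eqref{eqn: weak measurable rep norm} (which identifies $\norm{\TV{\nu[\cdot]}}_{L^p}$ with $\norm{\overstar\mu'}_{L^{p'}(\meas)^\ast}$) gives the second bound. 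The main obstacle I anticipate is getting the combinatorial/geometric constants exactly right — precisely how the edge lengths, the number of edges at each scale, and the concavity estimate for $\beta$ combine to produce the clean factor $\sqrt n\, 2^{j(n-1)}\beta(2^{-jn})$ — and making sure the boundary/reindexing between $G^k_\mu$, $P^k(\mu)$, and the stated endpoints $P^\ell(\mu), P^k(\mu)$ is consistent; the analytic content (membership in $\WDPath$, never-cyclicity, weak differentiability of weights) is essentially already in hand from Lemmas~\ref{lem: Lp weights} and \ref{lem: Pk approximates}.
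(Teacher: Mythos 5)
Your proposal is correct and is essentially the paper's own argument: the paper likewise takes $G^{k,\ell}_\mu$ to be the telescoping sum of the $F^j_\mu$, bounds $\MLp^\tau_p$ scale-by-scale via $\tau\leq\beta$ and concavity of $\beta$ with edge length $\sqrt{n}\,2^{-j}$ and roughly $2^{jn}$ cubes carrying total mass $1$, and bounds the derivative using $w'(e,\cdot)=\nu[\cdot](Q_e)$ from \eqref{eqn: weak derivative weights}, disjointness of same-scale cubes giving $\sum_e\abs{\nu[t](Q_e)}\leq\TV{\nu[t]}$, and \eqref{eqn: weak measurable rep norm}. The indexing/orientation bookkeeping you flag as your main worry is exactly the point the paper also treats loosely (its own sum runs from $k$ to $\ell$ in the definition but $k$ to $\ell-1$ in the displayed formula), and resolving it is routine.
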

\begin{proof}
Define 
\begin{align*}
    G^{k, \ell}_\mu
    :=\sum_{j=k}^\ell F^j_\mu,
\end{align*}
then clearly $G^{k, \ell}$ is never cyclic and belongs to $\WDPath(P^k(\mu), P^\ell(\mu))$.

Now note that
\begin{align*}
    G^{k, \ell}_\mu[t]=\sum_{j=k}^{\ell-1} \sum_{x\in L^{j}}\sum_{i=1}^{2^n}\mu[t](Q^{x, 2^{-j}}_i)[[\overrightarrow{x, c^{x, 2^{-j}}_i}]],
\end{align*}
then if $\beta$ is the concave function in Definition~\ref{def: transportation cost} of admissibility,
\begin{align*}
   \sum_{j=k}^{\ell-1}\sum_{x\in L^{j}}\sum_{i=1}^{2^n}\tau(\mu[t](Q^{x, 2^{-j}}_i))\HOne{\overrightarrow{x, c^{x, 2^{-j}}_i}}
   &\leq\sqrt{n}\sum_{j=k}^{\ell-1} 2^{-j}\sum_{x\in L^{j}}\sum_{i=1}^{2^n}\beta(\mu[t](Q^{x, 2^{-j}}_i))\\
   &\leq \sqrt{n}\sum_{j=k}^{\ell-1} 2^{-j}2^{jn}\beta\left(\sum_{x\in L^{j}}\sum_{i=1}^{2^n}\frac{\mu[t](Q^{x, 2^{-j}}_i)}{2^{jn}}\right)\\
   &= \sqrt{n}\sum_{j=k}^{\ell-1} 2^{j(n-1)}\beta(2^{-jn}).
\end{align*} 
A similar calculation, recalling~\eqref{eqn: weak derivative weights}, yields
\begin{align*}
    \sum_{j=k}^{\ell-1}\sum_{x\in L^{j}}\sum_{i=1}^{2^n}\abs{(\mu[t](Q^{x, 2^{-j}}_i))'}\HOne{\overrightarrow{x, c^{x, 2^{-j}}_i}}
   &=\sqrt{n}\sum_{j=k}^{\ell-1} 2^{-j}\sum_{x\in L^{j}}\sum_{i=1}^{2^n}\abs{\nu[t](Q^{x, 2^{-j}}_i)}\\
   &\leq \sqrt{n}\sum_{j=k}^{\ell-1} 2^{-j}\TV{\nu[t]},
\end{align*}
where $\nu$ corresponds to $H$ from Remark~\ref{rem: weak derivative remark} with $\overstar{F}=\overstar{\mu}'$. Taking the $L^p$ norm, using~\eqref{eqn: weak measurable rep norm}, and recalling Lemma~\ref{lem: eliminating cycles}~\eqref{eqn: never cyclic energy} finishes the proof.
\end{proof}

An immediate corollary of the above is the following.
\begin{cor}\label{cor: exists finite energy}
    Under the same conditions as Proposition~\ref{prop: scaling paths}, for any $\mu^+$ and $\mu^-\in \overstar{W}^{1,p}(\prob(\compact))$, there exists $T\in \WStarPath(\mu^+, \mu^-)$ with $\ELp^{\tau, p}_{\lambda}(T)<\infty$.
\end{cor}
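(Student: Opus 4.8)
The plan is to produce a single discrete approximating sequence for some $T\in\WStarPath(\mu^+,\mu^-)$ whose discrete energies stay bounded, and then read off finiteness of $\ELp^{\tau,p}_\lambda(T)$ from Definition~\ref{def: cts system cost function} together with the compactness result Proposition~\ref{prop: compactness result}. Concretely, for each $\ell\geq 2$ I would build a never-cyclic discrete $W^{1,p}$ transport path $G_\ell$ from $P^\ell(\mu^+)$ to a small perturbation $\widehat P^\ell(\mu^-)$ of $P^\ell(\mu^-)$ by concatenating three pieces: (i) the never-cyclic refinement path $G^{1,\ell}_{\mu^+}$ of Proposition~\ref{prop: scaling paths}, which lies in $\WDPath(P^\ell(\mu^+),P^1(\mu^+))$; (ii) a fixed finite ``core'' path in $\WDPath(P^1(\mu^+),\widehat P^1(\mu^-))$, obtained by routing all the mass of the $2^n$ atoms of $P^1(\mu^+)$ into one auxiliary vertex $z$ and then back out to the $2^n$ atoms of $\widehat P^1(\mu^-)$; and (iii) the edge-reversal $\overline{\widehat G^{1,\ell}_{\mu^-}}\in\WDPath(\widehat P^1(\mu^-),\widehat P^\ell(\mu^-))$ of the refinement path built for $\mu^-$ on a perturbed copy of the dyadic grid. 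Concatenation means forming the union of the (pairwise disjoint) edge sets with the given weights; the Kirchhoff relations add and telescope through the intermediate data, so $G_\ell\in\WDPath(P^\ell(\mu^+),\widehat P^\ell(\mu^-))$, and all of its edges lie in a fixed compact set $\compact'\Subset\domain$ (say $[-3,3]^n$).

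The perturbation of the grid underlying piece~(iii) is chosen, exactly as in the proof of Lemma~\ref{lem: mass bounded}~\eqref{eqn: same compact support} but with displacements tending to $0$ as $\ell\to\infty$, so that pieces~(i) and~(iii) have disjoint vertex sets; with the core routed through the single fresh vertex $z$ one checks directly that $G_\ell$ has no directed cycle at all, since in the forward tree every non-root vertex has exactly one incoming edge (from its parent), in the reversed tree every vertex has only incoming edges together with (at the root) one core edge, and the core cannot bridge the two vertex families inside a closed walk. Thus $G_\ell$ is never cyclic, so Lemma~\ref{lem: eliminating cycles}~\eqref{eqn: never cyclic energy} gives $\DELp^{\tau,p}_\lambda(G_\ell)=\MLp^\tau_p(G_\ell)+\lambda\norm{G_\ell'}_{L^p(\vecmeas)}$. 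Both $\MLp^\tau_p$ and $\norm{\cdot'}_{L^p(\vecmeas)}$ are subadditive under this concatenation, the core contributes a fixed finite amount to each (its weights are bounded by $1$ and its edge lengths are bounded, and its derivative is controlled by $\TV{\overstar{(\mu^+)}{}'}+\TV{\overstar{(\mu^-)}{}'}$ in $L^p$), and Proposition~\ref{prop: scaling paths} bounds the contributions of~(i) and~(iii) by $\sqrt n\sum_{j=1}^\infty 2^{j(n-1)}\beta(2^{-jn})$ and $\sqrt n\bigl(\norm{\overstar{(\mu^+)}{}'}_{L^{p'}(\meas)^\ast}+\norm{\overstar{(\mu^-)}{}'}_{L^{p'}(\meas)^\ast}\bigr)\sum_{j=1}^\infty 2^{-j}$. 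The geometric series converges, and the series $\sum_j 2^{j(n-1)}\beta(2^{-jn})$ converges precisely because $\tau$ is admissible (the substitution $s=2^{-jn}$ makes it comparable to $\int_0^1 s^{1/n-2}\beta(s)\,ds<\infty$). Hence $\sup_\ell\DELp^{\tau,p}_\lambda(G_\ell)<\infty$, and via Lemma~\ref{lem: subadditive bound} (weights lie in $[0,1]$, so $\MLp^1_p\leq\rho(\tau,1)^{-1}\MLp^\tau_p$) also $\sup_\ell\bigl(\norm{G_\ell}_{L^p(\vecmeas)}+\norm{G_\ell'}_{L^p(\vecmeas)}\bigr)<\infty$.

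Finally, Lemma~\ref{lem: Pk approximates} yields $P^\ell(\mu^\pm)\ptwiseto\mu^\pm$, and since the perturbations vanish the same holds for $\widehat P^\ell(\mu^-)$; so all the hypotheses of Proposition~\ref{prop: compactness result} are met, and a subsequence satisfies $(G_\ell,P^\ell(\mu^+),\widehat P^\ell(\mu^-))\approxto(T,\mu^+,\mu^-)$ for some $T\in\WStarPath(\mu^+,\mu^-)$. By Definition~\ref{def: cts system cost function},
\[
\ELp^{\tau,p}_\lambda(T)\leq\liminf_{\ell\to\infty}\DELp^{\tau,p}_\lambda(G_\ell)<\infty,
\]
which is the assertion. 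I expect the one genuinely delicate point to be the gluing step: the two dyadic refinement paths must be stitched together with one of them reversed, and if this stitching is done naively on the same grid it creates strong cycles, after which the derivative term in $\DELp^{\tau,p}_\lambda$ would blow up as $\ell\to\infty$; the perturbation of one of the grids is exactly what keeps the glued path never cyclic, and everything else is bookkeeping on top of Propositions~\ref{prop: scaling paths} and~\ref{prop: compactness result}.
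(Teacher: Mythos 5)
Your proposal is correct and is essentially the paper's own argument: build the dyadic refinement paths of Proposition~\ref{prop: scaling paths} for $\mu^+$ and $\mu^-$, reverse and slightly displace the second so the glued graph is never cyclic, bound $\DELp^{\tau,p}_\lambda$ uniformly via the scaling-path estimates, admissibility of $\tau$, and Lemma~\ref{lem: subadditive bound}, then invoke Lemma~\ref{lem: Pk approximates} and Proposition~\ref{prop: compactness result} and read off $\ELp^{\tau,p}_\lambda(T)<\infty$ from Definition~\ref{def: cts system cost function}. The only differences are cosmetic: the paper joins the two trees at their roots by a single unit-weight edge $[[\overrightarrow{0,2^{-k}e_1}]]$ and translates the reversed tree by $2^{-k}e_1$, whereas you glue at level one through an auxiliary hub on a perturbed grid.
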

\begin{proof}
    By a translation and dilation, without loss we may assume that $\compact \subset [-1,1)^n$. Note that $G^\ell_{\mu^\pm}$ as in Definition~\ref{def: n-adic mass fluxes} is $G^{0, \ell}_{\mu^\pm}$ in the notation of Proposition~\ref{prop: scaling paths}. Now define $\tilde{G}^\ell_{\mu^-}$ to be the discrete path whose underlying directed graph is the one for $G^\ell_{\mu^-}$ with its edges reversed, with $2^{-\ell}e_1$ added to all of its vertices. Also define 
\begin{align*}
G_\ell :&= G^\ell_{\mu^+} + \tilde{G}^\ell_{\mu^-}+[[\overrightarrow{0, 2^{-\ell}e_1}]],\\
a^+_\ell:&=P^\ell(\mu^+),\\
\Xi_\ell(x):&=x+2^{-\ell}e_1,\qquad
a^-_\ell:=(\Xi_\ell)_\sharp P^\ell(\mu^-).
\end{align*}
Note that $\spt a^+_\ell\cap \spt a^-_\ell=\emptyset$, $\spt a^+_\ell\cup \spt a^-_\ell\subset [-2, 2)^n$, $G_\ell$ is never cyclic, and $G_\ell\in \WDPath(a^+_\ell, a^-_\ell)$. By Lemma~\ref{lem: Pk approximates}, we have $a^\pm_\ell\Lpflatto \mu^\pm$.
Since $G_\ell$ is never cyclic, using \cite[Lemma 2.9]{BrancoliniWirth18} with Lemma~\ref{lem: subadditive bound}, then Lemma~\ref{lem: eliminating cycles}~\eqref{eqn: never cyclic energy}, and Proposition~\ref{prop: scaling paths}, for some $C_\tau>0$ we have
\begin{align}
   &\norm{G_\ell}_{L^p(\vecmeas)}+\norm{G_\ell'}_{L^p(\vecmeas)}
   \leq C_\tau\DELp^{\tau, p}_\lambda(G_\ell)\notag\\
   &\leq 
   C_\tau\left(\tau(1)2^{-\ell}+2\sqrt{n}\left(\sum_{j=1}^\infty 2^{j(n-1)}\beta(2^{-jn})+\norm{(\overstar{\mu}^+)'}_{L^{p'}(\vecmeas)^\ast}+\norm{(\overstar{\mu}^-)'}_{L^{p'}(\vecmeas)^\ast}\right)\right)\label{eqn: energy bounded}
\end{align}
which is finite from \cite[Lemma 2.15]{BrancoliniWirth18}, hence $\norm{G_\ell}_{L^p(\vecmeas)}+\norm{G_\ell'}_{L^p(\vecmeas)}$ is bounded independent of $\ell\in \N$. 
Since $e\subset [-2, 2)^n$ for any $e\in E(G_\ell)$ and $\ell\in \N$ by construction, we can apply Proposition~\ref{prop: compactness result} to obtain (possibly passing to a subsequence) that $(G_\ell, a^+_\ell, a^-_\ell)\approxto (T, \mu^+, \mu^-)$ for some $T\in \WStarPath(\mu^+, \mu^-)$, and~\eqref{eqn: energy bounded} also shows that $\ELp^{\alpha, p}_\lambda(T)<\infty$.

\end{proof}
Combining the above result with our compactness result, we can show existence of minimizers.
\begin{proof}[Proof of Theorem~\ref{thm: minimizers exist}]
    By the assumptions we may apply Corollary~\ref{cor: exists finite energy} to see that the set $\WStarPath(\mu^+, \mu^-)$ is nonempty, and $\dist^{\tau, p}_\lambda(\mu^+, \mu^-)<\infty$. Then combining Lemma~\ref{lem: mass bounded}~\eqref{eqn: bounded seq for distance} and \eqref{eqn: same compact support} with Proposition~\ref{prop: compactness result} yields a minimizing $T\in \WStarPath(\mu^+, \mu^-)$ that is H\"older continuous from $[0, 1]$ to $(\vecmeas, \TV{\cdot})$.

    Now suppose in addition that \eqref{eqn: tau at zero condition} holds. If $(G_i, a^+_i, a^-_i)\approxto (T, \mu^+, \mu^-)$ is the approximating sequence obtained from Lemma~\ref{lem: mass bounded},  by Fatou's lemma we find that
    \begin{align*}
        \int_0^1 \left(\liminf_{i\to\infty}\Mcur^\tau(G_i[t])\right)dt
        &\leq \liminf_{i\to\infty}\int_0^1 \Mcur^\tau(G_i[t])dt\leq \liminf_{i\to\infty}\DELp^{\tau, p}_\lambda(G_i)<\infty.
    \end{align*}
    In particular $\liminf_{i\to\infty}\Mcur^\tau(G_i[t])$ is finite for a.e. $t\in [0, 1]$, thus for any $t$ in a full measure subset of $[0, 1]$ we can pass to a subsequence (not relabeled, but depending on $t$) such that
    \begin{align*}
        G_i[t]\flatto T[t],\qquad
        \sup_{i\in \N}\Mcur^\tau(G_i[t])<\infty.
    \end{align*}
    Then we can apply \cite[Lemma 5.1]{ColomboDeRosaMarcheseStuvard17} to find for all such $t\in [0, 1]$, the current $T[t]$ is rectifiable, let $T[t]=[[E_t, V_t, \theta_t]]$. Let $D$ be a countable, dense subset of $[0, 1]$ such that $T[t]$ is rectifiable for all $t\in D$ and define
    \begin{align*}
        E:=\bigcup_{t\in [0, 1]\cap D}E_t,
    \end{align*}
    which we see is countably $1$-rectifiable. Now for any $t_0\in [0, 1]\setminus D$, take $\{t_\ell\}_{\ell\in \N}\subset [0, 1]\cap D$ converging to $t_0$, then since $\theta_{t_\ell}\equiv 0$ on $\R^n\setminus E$, we find that
    \begin{align*}
        \abs{\int_{E_{t_0}\setminus E}\theta_{t_0}(x)d\mathcal{H}^1(x)}
        &\leq\int_{E_{t_0}\setminus E}\abs{\theta_{t_0}(x)-\theta_{t_\ell}(x)}d\mathcal{H}^1(x)\\
        &\leq \TV{T[t_\ell]-T[t_0]}\leq C_p \abs{t_\ell-t_0}^{1-1/p}\to 0,\qquad \ell \to\infty,
    \end{align*}
    hence $\theta_{t_0}$ is zero $\mathcal{H}^1$-a.e. on $E_{t_0}\setminus E$, finishing the proof of our theorem.
\end{proof}
\section{Metric properties}\label{sec: metric}
In this section, we show that $\dist^{\tau, p}_\lambda$ is a metric.
\begin{thm}\label{thm: metric}
    If $\tau$ is an admissible transportation cost, $\lambda>0$, and $1<p\leq\infty$, then $\dist^{\tau, p}_\lambda$ is a metric on $L^p(\prob(\compact))$.
\end{thm}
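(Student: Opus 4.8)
The strategy is to verify the metric axioms for $\dist^{\tau, p}_{\lambda}$, regarded as $[0,\infty]$-valued (finiteness on $\overstar{W}^{1, p}(\prob(\compact))$ is Corollary~\ref{cor: exists finite energy}, and non-negativity is immediate from $\DELp^{\tau, p}_{\lambda}\geq 0$). For the \emph{diagonal}, given $\mu\in \overstar{W}^{1, p}(\prob(\compact))$ let $G_i$ be the edgeless graph with vertex set $\spt P^i(\mu)$; then $G_i\equiv 0\in \WDPath(P^i(\mu), P^i(\mu))$, and $(G_i, P^i(\mu), P^i(\mu))\approxto (0, \mu, \mu)$ by Lemma~\ref{lem: Pk approximates}, so $\dist^{\tau, p}_{\lambda}(\mu, \mu)\leq \DELp^{\tau, p}_{\lambda}(G_i)=0$. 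For \emph{symmetry}, given $T\in \WStarPath(\mu^+, \mu^-)$ with discrete approximating sequence $(G_i, a^+_i, a^-_i)$, reverse every edge of $G_i$ to obtain $G_i^{\mathrm{rev}}$; then $G_i^{\mathrm{rev}}=-G_i$ as elements of $\vecmeas$, $G_i^{\mathrm{rev}}\in \WDPath(a^-_i, a^+_i)$, and since $\MLp_p^\tau$ depends only on $\abs{w(e,\cdot)}$ and the $L^p(\vecmeas)$-norm is invariant under $v\mapsto -v$, the cycle decomposition of $G_i^{\mathrm{rev}}$ mirrors that of $G_i$ and $\DELp^{\tau, p}_{\lambda}(G_i^{\mathrm{rev}})=\DELp^{\tau, p}_{\lambda}(G_i)$. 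Thus $(G_i^{\mathrm{rev}}, a^-_i, a^+_i)\approxto (-T, \mu^-, \mu^+)$; taking infima over approximating sequences and transport paths (and applying the same construction to $-T$) yields $\dist^{\tau, p}_{\lambda}(\mu^-, \mu^+)=\dist^{\tau, p}_{\lambda}(\mu^+, \mu^-)$.

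For \emph{identity of indiscernibles}, suppose $\dist^{\tau, p}_{\lambda}(\mu^+, \mu^-)=0$; then $\mu^\pm\in \overstar{W}^{1, p}(\prob(\compact))$ and by Lemma~\ref{lem: mass bounded}~\eqref{eqn: bounded seq for distance},~\eqref{eqn: same compact support} there are never cyclic $G_i\in \WDPath(a^+_i, a^-_i)$ supported in $\compact$ with $a^\pm_i\ptwiseto \mu^\pm$, $\sup_i\norm{G_i}_{L^p(\vecmeas)}<\infty$, and $\DELp^{\tau, p}_{\lambda}(G_i)\to 0$. Applying Proposition~\ref{prop: compactness result} gives a subsequence and $T\in \WStarPath(\mu^+, \mu^-)$ with $G_i[t]\flatto T[t]$ for all $t$. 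Since $\DELp^{\tau, p}_{\lambda}(G_i)\to 0$, Lemma~\ref{lem: subadditive bound} gives $\norm{G_i}_{L^p(\vecmeas)}\leq \rho(\tau, 1)\MLp_p^\tau(G_i)\to 0$, so $\Mcur(G_i[t])\to 0$ for a.e.\ $t$ along a further subsequence, and lower semicontinuity of mass under flat convergence forces $T[t]=0$, hence $\partial T[t]=0$, for a.e.\ $t$. On the other hand $\partial G_i[t]=\pm(a^+_i[t]-a^-_i[t])\wto \pm(\mu^+[t]-\mu^-[t])$ for a.e.\ $t$, while $\sup_i(\Mcur(G_i[t])+\Mcur(\partial G_i[t]))<\infty$ a.e.\ (Lemma~\ref{lem: bounded discrete uniformly Holder}~\eqref{eqn: mass Lp converges}) and Remark~\ref{rem: vector measures and 1 currents} give $\partial G_i[t]\currentto \partial T[t]=0$. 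Testing against $\mathcal{D}^0(\domain)$ yields $\mu^+[t]=\mu^-[t]$ a.e., i.e.\ $\mu^+=\mu^-$ in $L^p(\prob(\compact))$; the converse is the diagonal computation.

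The substantive point is the \emph{triangle inequality}. Let $\mu^+, \nu, \mu^-\in \overstar{W}^{1, p}(\prob(\compact))$ (otherwise a term on the right is $+\infty$). Using Lemma~\ref{lem: mass bounded}, pick never cyclic discrete approximating sequences $(G_i, a^+_i, b_i)\approxto (T_1, \mu^+, \nu)$ and $(H_i, c_i, d_i)\approxto (T_2, \nu, \mu^-)$, supported in $\compact$, with $\DELp^{\tau, p}_{\lambda}(G_i)\to \dist^{\tau, p}_{\lambda}(\mu^+, \nu)$ and $\DELp^{\tau, p}_{\lambda}(H_i)\to \dist^{\tau, p}_{\lambda}(\nu, \mu^-)$. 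The first step is to reduce to the situation where both sequences pass through the same atomic approximation of $\nu$: by appending to $G_i$ (resp.\ prepending to $H_i$) a transition onto a sufficiently fine dyadic approximation $P^{k_i}(\nu)$, $k_i\to\infty$, assembled from the scaling paths of Proposition~\ref{prop: scaling paths} whose $\MLp_p^\tau$-cost and $L^p(\vecmeas)$-derivative norm are tails of the convergent series in Definition~\ref{def: transportation cost} and hence $o(1)$, we may assume $b_i=P^{k_i}(\nu)$ and $c_i=P^{k_i}(\nu)$. To concatenate without creating strong cycles, take two copies of $\spt P^{k_i}(\nu)$ separated by $\epsilon_i e_1$ ($\epsilon_i\to 0$), joined by short edges carrying the weights $P^{k_i}(\nu)[\cdot](\{y\})$, and let $\widehat G_i$ be the union of $G_i$, these short edges, and the translate by $\epsilon_i e_1$ of $H_i$; then $\widehat G_i$ is never cyclic because $G_i$ and the translate of $H_i$ share no vertices except via the one-directional short edges, $\widehat G_i\in \WDPath(\widehat a^+_i, \widehat a^-_i)$ with $\widehat a^\pm_i\ptwiseto \mu^\pm$ (the $P^{k_i}(\nu)$-weights cancel in the Kirchhoff conditions), and $\widehat G_i[t]\flatto T_1[t]+T_2[t]$ with $\widehat G_i'$ converging weak-$\ast$ to $\overstar{(T_1+T_2)}'$. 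Proposition~\ref{prop: compactness result} applied to $(\widehat G_i)$ then identifies $T_1+T_2$ as an element of $\WStarPath(\mu^+, \mu^-)$ with $(\widehat G_i)$ a discrete approximating sequence. Finally, since the edge sets of the three pieces of $\widehat G_i$ are disjoint, subadditivity of $\tau$ and the triangle inequality in $L^p$ give $\MLp_p^\tau(\widehat G_i)\leq \MLp_p^\tau(G_i)+\MLp_p^\tau(H_i)+o(1)$, the $L^p(\vecmeas)$-norm of $\widehat G_i'$ is bounded by that of $G_i'$ plus that of $H_i'$ plus $o(1)$, and never-cyclicity of $\widehat G_i$, $G_i$, $H_i$ via Lemma~\ref{lem: eliminating cycles}~\eqref{eqn: never cyclic energy} upgrades these to $\DELp^{\tau, p}_{\lambda}(\widehat G_i)\leq \DELp^{\tau, p}_{\lambda}(G_i)+\DELp^{\tau, p}_{\lambda}(H_i)+o(1)$. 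Hence $\dist^{\tau, p}_{\lambda}(\mu^+, \mu^-)\leq \ELp^{\tau, p}_{\lambda}(T_1+T_2)\leq \liminf_i \DELp^{\tau, p}_{\lambda}(\widehat G_i)\leq \dist^{\tau, p}_{\lambda}(\mu^+, \nu)+\dist^{\tau, p}_{\lambda}(\nu, \mu^-)$. I expect the main obstacle to be precisely this reconciliation step: bounding the $\MLp_p^\tau$-cost of transporting a general near-optimal endpoint approximation of $\nu$ onto the dyadic $P^{k_i}(\nu)$ — which is where admissibility of $\tau$ and Proposition~\ref{prop: scaling paths} are essential — while simultaneously controlling the derivative contribution via the weak-$\ast$ differentiability of $\nu$ and keeping the concatenated path never cyclic so that the energy genuinely splits.
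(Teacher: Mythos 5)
Your treatment of nonnegativity, the diagonal, symmetry, and identity of indiscernibles is sound (your indiscernibility argument, via collapse of the mass of the approximating paths and continuity of the boundary under flat convergence, is a legitimate alternative to the paper's route through the $\Lid_1$ lower bound of Lemma~\ref{lem: Lid and branched}). The gap is in the triangle inequality, at exactly the step you flag as the main obstacle: the claim that one may append a transition onto $P^{k_i}(\nu)$ ``assembled from the scaling paths of Proposition~\ref{prop: scaling paths}'' whose $\MLp^\tau_p$-cost and derivative norm are tails of the convergent series, so that one may assume $b_i=c_i=P^{k_i}(\nu)$. Proposition~\ref{prop: scaling paths} only produces paths $G^{k,\ell}_\mu\in\WDPath(P^\ell(\mu),P^k(\mu))$ between two dyadic approximations of the \emph{same} measure-valued function $\mu$; it gives no path from $b_i$ (an arbitrary atomic approximation of $\nu$ coming from the near-optimal sequence, with atoms at arbitrary locations and weights unrelated to the dyadic cubes of $\nu$) to $P^{k_i}(\nu)$, and the cost of any such bridge is not a series tail. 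The natural repair --- collapse $b_i$ onto the dyadic grid, i.e.\ pass from $b_i$ to $P^k(b_i)$, and then compare $P^k(b_i)$ with $P^k(c_i)$ or $P^k(\nu)$ --- runs into the fact that $b_i\ptwiseto\nu$ does \emph{not} imply $b_i[t](Q)\to\nu[t](Q)$ for dyadic cubes $Q$ (atoms may sit on cube boundaries), so the weights of the two middle approximations need not become close; consequently neither the $\MLp^\tau_p$-cost nor the $L^p(\vecmeas)$-norm of the derivative of a connecting path is $o(1)$ by any estimate you invoke, and the derivative contribution in particular needs quantitative control in terms of $(b_i)'$ and $\overstar{\nu}'$, not just the weak-a.e.\ convergence of Definition~\ref{def: approximating boundary data}.

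This missing bridge is precisely where the paper's proof does its work: it keeps each sequence's own middle marginal, builds $G^1_{i,k}\in\WDPath(a^-_{12,i},P^k(a^-_{12,i}))$ by collapsing atoms and invoking Proposition~\ref{prop: scaling paths} (with a scale $\ell_{i,k}$ chosen after $i$ so the term involving $\sharp\spt a^-_{12,i}$ is small), then mollifies spatially with $\zeta_\varepsilon$ (Lemma~\ref{lem: mollifications}) so that the cube masses of $\zeta_\varepsilon\ast a^-_{12,i}$ and $\zeta_\varepsilon\ast a^+_{23,i}$, and of their derivatives, do converge to each other as $i\to\infty$ (using condition~\eqref{eqn: bounded derivative TV} and dominated convergence), routes the resulting discrepancy through a hub at the origin ($G^3_{i,k}$, whose $\MLp^\tau_p$-cost and derivative cost then vanish as $i\to\infty$), and uses small generic translations $\Xi^\pm_i$ to keep the concatenation never cyclic; only then do the error terms reduce to series tails plus quantities vanishing as $i\to\infty$, $\varepsilon\to 0$, $k\to\infty$. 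Your concatenation, never-cyclicity bookkeeping, energy splitting via Lemma~\ref{lem: eliminating cycles}~\eqref{eqn: never cyclic energy}, and final appeal to Proposition~\ref{prop: compactness result} are all in the same spirit as the paper, but without a construction playing the role of the mollification/hub steps the reduction to a common middle marginal at $o(1)$ cost is unjustified, and the triangle inequality does not follow as written.
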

To this end, we show some auxiliary results. First, by the following argument similar to \cite[Theorem 4.6]{Santambrogio15book} we can relate $\Fcur$ to $\dist^{\tau, p}_\lambda$. 
\begin{lem}\label{lem: Lid and branched}
    If $1<p\leq \infty$, $\lambda>0$, and $\mu^\pm\in \overstar{W}^{1, p}(\prob(K))$ for some compact, convex $K\subset \domain$, 
    \begin{align*}
        \dist^{\tau, p}_\lambda(\mu^+, \mu^-)
        &\geq \rho(\tau, 1)^{-1}\Lp{\Fcur(\mu^+-\mu^-)}+\lambda \Lp{\Fcur(\nu^+-\nu^-)},
    \end{align*}
    where $\nu^\pm$ are the $H$ obtained from Remark~\ref{rem: weak derivative remark} with $\overstar{F}=(\overstar{\mu}^\pm)'$.
\end{lem}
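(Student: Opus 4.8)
The plan is to work at the level of a discrete approximating sequence and use the fact that the mass flow and its weak derivative each control an appropriate transport cost between their respective boundary data. Concretely, let $(G_i, a_i^+, a_i^-) \approxto (T, \mu^+, \mu^-)$ be a discrete approximating sequence for some $T \in \WStarPath(\mu^+, \mu^-)$ with $\liminf_{i\to\infty} \DELp^{\tau, p}_\lambda(G_i) < \infty$; by Lemma~\ref{lem: eliminating cycles} and the reductions in the proof of Lemma~\ref{lem: mass bounded}, we may assume each $G_i$ is never cyclic, so that $\DELp^{\tau, p}_\lambda(G_i) = \MLp^\tau_p(G_i) + \lambda \norm{G'_i}_{L^p(\vecmeas)}$. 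For the first term, Lemma~\ref{lem: subadditive bound} gives $\tau(w) \geq \rho(\tau, 1) w$ pointwise on $[0,1]$ (after a normalization reduction since all weights are $\leq 1$), so $\MLp^\tau_p(G_i) \geq \rho(\tau, 1) \norm{G_i}_{L^p(\vecmeas)}$. The key classical fact is that for a fixed $t$, $\Lid_1(\partial^- G_i[t], \partial^+ G_i[t]) = \Lid_1(a_i^-[t], a_i^+[t]) \leq \Mcur(G_i[t]) = \TV{G_i[t]}$: indeed, for $\psi \in \Lip_1(\domain)$, writing $G_i[t] = \sum_e w_i(e,t)[[e]]$ and using that $\partial G_i[t] = a_i^-[t] - a_i^+[t]$ (Remark~\ref{rem: boundary of transport path}), one has $\abs{\int \psi\, d(a_i^-[t] - a_i^+[t])} = \abs{\inner{G_i[t]}{d\psi}} \leq \norm{d\psi}_{C} \TV{G_i[t]} \leq \TV{G_i[t]}$. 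Taking the $L^p$ norm in $t$ and using that $a_i^\pm[t] \wto \mu^\pm[t]$ a.e. (hence $\Lid_1(a_i^\pm[t], \mu^\pm[t]) \to 0$ a.e. since $\Lid_1$ metrizes weak convergence of probability measures with uniformly bounded supports), combined with Fatou's lemma and weak-$\ast$ lower semicontinuity of the $L^p$ norm, yields $\Lp{\Lid_1(\mu^+, \mu^-)} \leq \liminf_i \Lp{\Lid_1(a_i^-, a_i^+)} \leq \liminf_i \norm{G_i}_{L^p(\vecmeas)} \leq \rho(\tau,1)^{-1} \liminf_i \MLp^\tau_p(G_i)$.

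For the derivative term, the analogous identity $\partial G'_i[t] = (a_i^-)'[t] - (a_i^+)'[t]$ from Remark~\ref{rem: boundary of transport path} gives, by the same Lipschitz-duality argument, $\Lid_1((a_i^-)'[t], (a_i^+)'[t]) \leq \TV{G'_i[t]}$ for a.e. $t$, hence $\Lp{\Lid_1((a_i^-)', (a_i^+)')} \leq \norm{G'_i}_{L^p(\vecmeas)}$. It remains to pass to the limit. By Lemma~\ref{lem: Lp weights} we have $(a_i^\pm)' = \overstar{(a_i^\pm)}{}'$, and by Remark~\ref{rem: pointwise implies weak convergence} (using Definition~\ref{def: approximating boundary data}\eqref{eqn: bounded derivative TV}) the measures $\nu_i^\pm = (a_i^\pm)'$ satisfy $\nu_i^\pm[t] \wstarto \nu^\pm[t]$ a.e. in $t$, where $\nu^\pm$ are the weak-$\ast$ representatives of $(\overstar{\mu}^\pm)'$. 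Here I would combine this a.e. weak-$\ast$ convergence with the uniform bound $\sup_i \TV{\nu_i^\pm[t]} \in L^p$ (again Definition~\ref{def: approximating boundary data}\eqref{eqn: bounded derivative TV}): for each fixed $t$ in the appropriate full-measure set, weak-$\ast$ convergence of uniformly bounded signed measures gives $\int \psi\, d\nu_i^\pm[t] \to \int \psi\, d\nu^\pm[t]$ for $\psi \in C_0(\domain)$, and a truncation argument extends this to $\psi \in \Lip_1(\domain)$ since the supports stay in a fixed compact set; thus $\abs{\int \psi\, d(\nu^-[t] - \nu^+[t])} = \lim_i \abs{\int \psi\, d(\nu_i^-[t] - \nu_i^+[t])} \leq \liminf_i \TV{G'_i[t]}$, so $\Lid_1(\nu^-[t], \nu^+[t]) \leq \liminf_i \TV{G'_i[t]}$ a.e. Applying Fatou then gives $\Lp{\Lid_1(\nu^+, \nu^-)} \leq \liminf_i \norm{G'_i}_{L^p(\vecmeas)}$.

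Combining the two estimates, for any discrete approximating sequence,
\begin{align*}
    \rho(\tau,1)^{-1}\Lp{\Lid_1(\mu^+, \mu^-)} + \lambda \Lp{\Lid_1(\nu^+, \nu^-)}
    &\leq \liminf_{i\to\infty}\left(\MLp^\tau_p(G_i) + \lambda \norm{G'_i}_{L^p(\vecmeas)}\right)
    = \liminf_{i\to\infty} \DELp^{\tau, p}_\lambda(G_i).
\end{align*}
Taking the infimum over all discrete approximating sequences for $T$ gives the bound with $\ELp^{\tau, p}_\lambda(T)$ on the right, and then the infimum over $T \in \WStarPath(\mu^+, \mu^-)$ gives $\dist^{\tau, p}_\lambda(\mu^+, \mu^-)$ as claimed. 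The main obstacle I anticipate is the passage to the limit in the derivative term: one must be careful that $\Lid_1$ is being evaluated between general signed measures (not probability measures), so the comparison with weak-$\ast$ convergence requires the truncation/uniform-support argument rather than the cleaner statement that $\Lid_1$ metrizes weak convergence; the uniform $L^p$ domination from Definition~\ref{def: approximating boundary data}\eqref{eqn: bounded derivative TV} is exactly what makes the Fatou step legitimate, which is presumably why the authors flagged that hypothesis as needed only in Section~\ref{sec: metric}. A secondary subtlety is the normalization needed to invoke Lemma~\ref{lem: subadditive bound} with $m=1$, i.e. justifying that one may reduce to the case where all edge weights of $G_i$ lie in $[0,1]$; this follows since $a_i^\pm[t]$ are probability measures, so each $w_i(e,t) \leq 1$ for the never-cyclic representatives (or, if not, by a further straightforward reduction splitting edges).
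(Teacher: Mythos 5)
Your proposal is correct and takes essentially the same route as the paper's proof: reduce to a never-cyclic discrete approximating sequence, use the boundary identities $\partial G_i[t]=a_i^-[t]-a_i^+[t]$ and $\partial G_i'[t]=(a_i^-)'[t]-(a_i^+)'[t]$ from Remark~\ref{rem: boundary of transport path} tested against $1$-Lipschitz functions to get $\Lid_1\leq \TV{G_i[t]}$ and $\Lid_1\leq\TV{G_i'[t]}$, invoke Lemma~\ref{lem: subadditive bound} for the $\tau$-mass term, and pass to the limit via Fatou and the a.e.\ weak(-$\ast$) convergence of the boundary data; your truncation/lower-semicontinuity argument for the signed derivative measures simply spells out what the paper compresses into ``the equivalence of $\Lid_1$ convergence with weak convergence.'' The only blemish, the mismatch between deriving $\MLp^\tau_p(G_i)\geq \rho(\tau,1)\norm{G_i}_{L^p(\vecmeas)}$ and then writing $\rho(\tau,1)^{-1}$ in the final display, is inherited from the constant as stated in the lemma (and in the paper's own proof), so it is not an additional gap in your argument.
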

\begin{proof}
    Take $\{a^\pm_i\}_{i\in \N}\subset W^{1, p}(\atomic)$ and $G_i\in \WDPath(a^+_i, a^-_i)$ obtained from Lemma~\ref{lem: mass bounded}~\eqref{eqn: same compact support}. Then for any $\psi\in C^\infty_c(\domain)$ and $t\in [0, 1]$, if $\max(\norm{\psi}_{C(\domain)}, \norm{d\psi}_{C(\domain)})\leq 1$ we see that
    \begin{align*}
        \TV{G_i[t]}
        &\geq -\inner{G_i[t]}{d\psi}
        =\int_{\domain} \psi d(a^+_i[t]-a^-_i[t]),\\
        \TV{G'_i[t]}
        &\geq -\inner{G'_i[t]}{d\psi}
        =\int_{\domain} \psi d((a^+_i)'[t]-(a^-_i)'[t]).
    \end{align*}
    Taking a supremum over such $\psi$,
    \begin{align*}
        \TV{G_i[t]}
        &\geq \Fcur(a^+_i[t]-a^-_i[t]),\\
        \TV{G'_i[t]}
        &\geq \Fcur((a^+_i)'[t]-(a^-_i)'[t]).
    \end{align*}
    Note that since $\mathcal{D}(\domain)$ is separable, the expressions on the right of each inequality above are countable suprema of Borel functions, hence also Borel. Thus we can integrate, and since $G_i$ is never cyclic we can use \cite[Lemma 2.9]{BrancoliniWirth18} with Lemma~\ref{lem: subadditive bound} and Lemma~\ref{lem: eliminating cycles}~\eqref{eqn: never cyclic energy} to obtain
    \begin{align*}
        \DELp^{\tau, p}_\lambda(G_i)
        &\geq \rho(\tau, 1)^{-1}\Lp{\TV{G_i}}+\lambda \Lp{\TV{G'_i}}\\
        &\geq \rho(\tau, 1)^{-1}\Lp{\Fcur(a^+_i-a^-_i)}+\lambda \Lp{\Fcur((a^+_i)'-(a^-_i)')}.
    \end{align*}
    Since $a^\pm_i\Lpflatto \mu^\pm$, taking a limit infimum above finishes the proof.
\end{proof}
To show the triangle inequality, we will need to consider spatial mollifications of elements of $\overstar{W}^{1, p}(\prob)$.
\begin{lem}\label{lem: mollifications}
    Let $\zeta\in C^\infty_c(B_1(0))$ be a radial function with values in $[0, 1]$ such that
    \begin{align*}
        \int_{B_1(0)}\zeta=1.
    \end{align*}
    and for any $\varepsilon>0$, let 
    \begin{align*}
        \zeta_{\varepsilon}(x):=\varepsilon^{-n}\zeta(x/\varepsilon).
    \end{align*}
        Let $\mu\in \overstar{W}^{1, p}(\prob)$, extend it to $\R$, and define for any $t\in [0, 1]$ and Borel set $Q\subset \domain$,
    \begin{align*}
        (\zeta_{\varepsilon}\ast \mu)[t](Q)
        :&=\int_{Q}\int_{\R^n}\zeta_{\varepsilon}(z-y)d\mu[t](y)d\mathcal{L}^n(z).
    \end{align*}
    Then for any Borel $Q\subset \domain$ with $\mathcal{L}^n(Q)<\infty$, the above function belongs to $W^{1,p}(\R; \R)$ with weak derivative given by
    \begin{align}\label{eqn: weak derivative of mollification}
        t\mapsto \int_{Q}\int_{\domain}\zeta_{\varepsilon}(z-y)d\nu[t](y)d\mathcal{L}^n(z),
    \end{align}
    where $\nu$ is the map corresponding to $H$ obtained from Remark~\ref{rem: weak derivative remark} with $\overstar{F}=\overstar{\mu}'$. In particular, for any $k\in \N$ we have $P_k( \zeta_{\varepsilon}\ast \mu)\in W^{1, p}(\atomic)$.

\end{lem}
\begin{proof}
    First, since $\zeta$ is bounded with compact support, the function $y\mapsto \int_Q\zeta_\varepsilon(z-y)d\mathcal{L}^n(z)$ belongs to $C_0(\domain)$, hence both $(\zeta_{\varepsilon}\ast \mu)[\cdot](Q)$ and the expression in~\eqref{eqn: weak derivative of mollification} are Borel. The function $(\zeta_{\varepsilon}\ast \mu)[\cdot](Q)$ is also clearly bounded and compactly supported, so belongs to $L^p(\R; \R)$. By~\eqref{eqn: weak measurable rep norm}, we also have
    \begin{align*}
        \norm{\int_{Q}\int_{\domain}\zeta_{\varepsilon}(z-y)d\nu[\cdot](y)d\mathcal{L}^n(z)}_{L^p(\R; \R)}
        &\leq \mathcal{L}^n(Q)\norm{\zeta_\varepsilon}_{C(\domain)}\norm{\TV{\nu[\cdot]}}_{L^p([-3, 4])}\\
        &\leq 7\mathcal{L}^n(Q)\norm{\zeta_\varepsilon}_{C(\domain)}\norm{\overstar{\mu}'}_{L^p(\meas)}<\infty.
    \end{align*}
    
    Next for fixed $z\in \domain$, we write $(R_z)_\sharp \zeta_{\varepsilon}(y)=\zeta_{\varepsilon}(z-y)$. Then for $\eta\in C^\infty_c(\R)$, we may apply Fubini's theorem to find
    \begin{align*}
        \int_\R \eta'(t)(\zeta_{\varepsilon}\ast \mu)[t](Q)dt
        &=\int_{Q}\left(\int_\R\eta'(t)\int_{\R^n}\zeta_{\varepsilon}(z-y)d\mu[t](y)dt\right)d\mathcal{L}^n(z)\\
        &=-\int_{Q}\inner{\overstar{\mu}'}{\eta(R_z)_\sharp \zeta_{\varepsilon}}d\mathcal{L}^n(z)\\
        &=-\int_\R\eta(t)\int_{Q}\inner{\nu[t]}{(R_z)_\sharp \zeta_{\varepsilon}}d\mathcal{L}^n(z)dt,
    \end{align*}
    thus the weak derivative of $(\zeta_{\varepsilon}\ast \mu)[\cdot](Q)$ is given by~\eqref{eqn: weak derivative of mollification}, and in particular $(\zeta_{\varepsilon}\ast \mu)[\cdot](Q)\in W^{1,p}(\R; \R)$.
    
    Finally, by definition, $P_k( \zeta_{\varepsilon}\ast \mu)$ is a sum of delta measures with coefficients given by $(\zeta_{\varepsilon}\ast \mu)[\cdot](Q)$ for certain cubes $Q$. By Lemma~\ref{lem: Lp weights}, this finishes the proof.
\end{proof}
We can now prove $\dist^{\tau, p}_\lambda$ is a metric.
\begin{proof}[Proof of Theorem~\ref{thm: metric}]
    It is clear that $\dist^{\tau, p}_\lambda$ is symmetric and nonnegative. Now suppose $\dist^{\tau, p}_\lambda(\mu_1, \mu_2)=0$ for some $\mu_1$, $\mu_2\in \overstar{W}^{1, p}(\prob(K))$. By Lemma~\ref{lem: Lid and branched}, we have $\Fcur(\mu_1[t]-\mu_2[t])=0$, or equivalently $\mu_1[t]=\mu_2[t]$ for a.e. $t\in [0, 1]$, hence $\mu_1=\mu_2$.

    Next suppose $\mu_1$, $\mu_2$, $\mu_3\in \overstar{W}^{1, p}(\prob(K))$, and let $\{(G_{12, i}, a^+_{12, i}, a^-_{12, i})\}_{i\in \N}$ and $\{(G_{23, i}, a^+_{23, i}, a^-_{23, i})\}_{i\in \N}$ be sequences coming from Lemma~\ref{lem: mass bounded}~\eqref{eqn: bounded seq for distance} whose energies converge to $\dist^{\tau, p}_\lambda(\mu_1, \mu_2)$ and $\dist^{\tau, p}_\lambda(\mu_2, \mu_3)$ respectively; also let  $\beta$ be the concave function from the definition of admissibility of $\tau$. Again by translation and dilation, we may assume $K\subset [-1, 1)^n$, while for each fixed $i\in \N$, by making sufficiently small uniform translations of $V(G_{23, i})$ we may assume that $V(G_{12, i})\cap V(G_{23, i})=\emptyset$.
    
    Now fixing $k\in \N$ with $k>1$, and $\varepsilon\in (0, 1)$, we construct a number of discrete $W^{1, p}$ paths similar to to \cite[Proposition 2.22]{BrancoliniWirth18}. First define $G^1_{i, k}\in \WDPath(a^-_{12, i}, P^k(a^-_{12, i}))$ by
    \begin{align*}
        G^1_{i, k}
        :=\sum_{x\in L^{\ell_{i, k}-1}}\sum_{j=1}^{2^n}\sum_{y\in \spt(a^-_{12, i})\cap Q^{x, 2^{1-\ell_{i, k}}}_j}a^-_{12, i}[\cdot](\{y\})[[\overrightarrow{y, c^{x, 2^{1-\ell_{i, k}}}_j}]]+G^{k, \ell_{i, k}}_{a^-_{12, i}},
    \end{align*}
    where $\ell_{i, k}\in \N$, $\ell_{i, k}>k$ is to be determined and $G^{k, \ell_{i, k}}_{a^-_{12, i}}$ is as in Proposition~\ref{prop: scaling paths}. Let $\beta$ be the concave function in the definition of admissibility. 
    Using that $a^-_{12, i}[t]$ is a probability measure for each $t$ and $\tau$ is increasing, and using Proposition~\ref{prop: scaling paths} we can calculate 
    \begin{align}
        \MLp^\tau_p(G^1_{i, k})
        &\leq \Lp{\sum_{x\in L^{\ell_{i, k}-1}}\sum_{j=1}^{2^n}\sum_{y\in \spt(a^-_{12, i})\cap Q^{x, 2^{1-\ell_{i, k}}}_j}\tau(a^-_{12, i}[\cdot](\{y\}))}2^{1-\ell_{i, k}}\sqrt{n}
        +\MLp^\tau_p(G^{k, \ell_{i, k}}_{a^-_{12, i}})\notag\\
        &\leq 2^{1-\ell_{i, k}}\sqrt{n}\tau(1)(\sharp\spt a^-_{12, i})+\sqrt{n}\sum_{j=k}^\infty 2^{j(n-1)}\beta(2^{-jn}),\label{eqn: G1 M bound}
    \end{align}
    and for each $t\in [0, 1]$,
    \begin{align}
        \TV{(G^1_{i, k})'[t]}
        &\leq \sqrt{n}\TV{(a^-_{12, i})'[t]}\left(2^{1-\ell_{i, k}}+\sum_{j=k}^\infty 2^{-j}\right).\label{eqn: G1' bound}
    \end{align}
    
    Next, let $\Xi^-_i(x):=x+v^-_i$ for some $v^-_i\in \R^n$ with $\abs{v^-_i}<1$ to be determined, take $\zeta$ as in Lemma~\ref{lem: mollifications}, and $\varepsilon\in (0, 1)$ also to be determined, then we will define a discrete $W^{1, p}$ path $G^2_{i, k}\in \WDPath(P^k(a^-_{12, i}), (\Xi^-_i)_\sharp P^k(\zeta_{\varepsilon}\ast a^-_{12, i}))$ by the following construction: given any $x_1$, $x_2\in L^{k-1}$ and $1\leq j_1, j_2\leq 2^n$, $G^2_{i, k}$ will have a edge $\overrightarrow{ c^{x_2, 2^{1-k}}_{j_2}, \Xi^-_i(c^{x_1, 2^{1-k}}_{j_1})}$ with weight function given for each $t\in [0, 1]$ by
    \begin{align*}
        w_{x_1, x_2, j_1, j_2}(t):=\left(\zeta_{\varepsilon}\ast (\mathds{1}_{Q^{x_1, 2^{1-k}}_{j_1}}a^-_{12, i})\right)[t](Q^{x_2, 2^{1-k}}_{j_2}).
    \end{align*}
    Then we can use Fubini's theorem to find
    \begin{align*}
        \sum_{j_2=1}^{2^n}\sum_{x_2\in L^{k-1}}w_{x_1, x_2, j_1, j_2}(t)
        &=\left(\zeta_{\varepsilon}\ast (\mathds{1}_{Q^{x_1, 2^{1-k}}_{j_1}}a^-_{12, i})\right)[t]([-2, 2)^n))\\
        &=\int_{Q^{x_1, 2^{1-k}}_{j_1}}\left(\int_{\R^n}\zeta_{\varepsilon}(z-y)d\mathcal{L}^n(z)\right)d(a^-_{12, i})[t](y)\\
        &=a^-_{12, i}[t](Q^{x_1, 2^{1-k}}_{j_1})
        =(P^k(a^-_{12, i}))[t](\{c^{x_1, 2^{1-k}}_{j_1}\}),\\
        \sum_{j_1=1}^{2^n}\sum_{x_1\in L^{k-1}}w_{x_1, x_2, j_1, j_2}(t)
        &=\left(\zeta_{\varepsilon}\ast (\sum_{j_1=1}^{2^n}\sum_{x_1\in L^{k-1}}\mathds{1}_{Q^{x_1, 2^{1-k}}_{j_1}}a^-_{12, i})\right)[t](Q^{x_2, 2^{1-k}}_{j_2})\\
        &=\left(\zeta_{\varepsilon}\ast a^-_{12, i}\right)[t](Q^{x_2, 2^{1-k}}_{j_2})
        =P^k\left(\zeta_{\varepsilon}\ast a^-_{12, i}\right)[t](\{c^{x_2, 2^{1-k}}_{j_2}\}).
    \end{align*}
        Also fix any $\eta\in C^\infty_c(\R)$, extend $w_{x_1, x_2, j_1, j_2}$ to $\R$, and suppose 
        \begin{align*}
            a^-_{12, i}[t]
            =\sum_{\ell=1}^{L^-_i}a^-_{12, i, \ell}(t)\delta_{x^-_{i, \ell}}.
        \end{align*}
        Since each $a^-_{12, i}[t]$ is a probability measure we can apply Fubini's theorem, and recalling Lemma~\ref{lem: Lp weights},
    \begin{align*}
        &\int_\R\eta'(t)w_{x_1, x_2, j_1, j_2}(t)dt\\
        &=\int_{Q^{x_2, 2^{1-k}}_{j_2}}\sum_{x^-_{i, \ell}\in Q^{x_1, 2^{1-k}}_{j_1}}\zeta_{\varepsilon}(z-x^-_{i, \ell})\left(\int_\R\eta'(t)a^-_{12, i, \ell}(t)dt\right)d\mathcal{L}^n(z)\\
        &=-\int_\R\eta(t)\int_{Q^{x_2, 2^{1-k}}_{j_2}}\sum_{x^-_{i, \ell}\in Q^{x_1, 2^{1-k}}_{j_1}}\zeta_{\varepsilon}(z-x^-_{i, \ell})(a^-_{12, i, \ell})'(t)d\mathcal{L}^n(z)dt\\
        &=-\int_\R\eta(t)\int_{Q^{x_1, 2^{1-k}}_{j_1}}\int_{Q^{x_2, 2^{1-k}}_{j_2}}\zeta_{\varepsilon}(z-y)d\mathcal{L}^n(z)d (a^-_{12, i})'[t](y)dt,
    \end{align*}
    and  
    \begin{align*}
        &\Lp{\int_{Q^{x_1, 2^{1-k}}_{j_1}}\int_{Q^{x_2, 2^{1-k}}_{j_2}}\zeta_{\varepsilon}(z-y)d\mathcal{L}^n(z)d (a^-_{12, i})'[\cdot](y)}\notag\\
        &\leq \Lp{\TV{(a^-_{12, i})'[\cdot]}}\sup_{y\in Q^{x_1, 2^{1-k}}_{j_1}}\int_{Q^{x_2, 2^{1-k}}_{j_2}}\zeta_{\varepsilon}(z-y)d\mathcal{L}^n(z)\notag\\
        &
        \leq\norm{(a^-_{12, i})'}_{L^p(\meas)}<\infty.
    \end{align*}
    Since $0
  \leq w_{x_1, x_2, j_1, j_2}\leq 1$, we have  $G^2_{i, k}\in \WDPath(P^k(a^-_{12, i}), (\Xi^-_i)_\sharp P^k(\zeta_{\varepsilon}\ast a^-_{12, i}))$. 
    It is clear that $w_{x_1, x_2, j_1, j_2}(t)$ is  nonzero only if the $\varepsilon$ neighborhood of $Q^{x_1, 2^{1-k}}_{j_1}$ has nonempty intersection with $Q^{x_2, 2^{1-k}}_{j_2}$. 
    The above calculation also yields 
    \begin{align*}
       \sum_{x_1, x_2\in L^{k-1}}\sum_{j_1, j_2=1}^{2^n}w_{x_1, x_2, j_1, j_2}(t)
       &=\sum_{x_2\in L^{k-1}}\sum_{j_2=1}^{2^n}P_k\left(\zeta_{\varepsilon}\ast a^-_{12, i}\right)[t](\{c^{x_2, 2^{1-k}}_{j_2}\})=1,
    \end{align*}
    since there are $2^{2kn+2n}$ total terms in the summation, by Jensen's inequality and concavity of $\beta$ we obtain,
    \begin{align}
        \MLp^\tau_p(G^2_{i, k})
        &\leq \Lp{\sum_{x_1, x_2\in L^{k-1}}\sum_{j_1, j_2=1}^{2^n}\beta(w_{x_1, x_2, j_1, j_2})}(2^{2-k}\sqrt{n}+\abs{v^-_i}+\varepsilon)\notag\\
        &\leq(2^{2-k}\sqrt{n}+\abs{v^-_i}+\varepsilon)2^{2kn+2n}\beta(2^{-(2kn+2n)})\notag\\
        &\leq 4^n(2^{2-k}\sqrt{n}+\abs{v^-_i}+\varepsilon)2^{2kn}\beta(2^{-2kn}).\label{eqn: G2 M bound}
    \end{align}
    Also we calculate,
\begin{align*}
    &\sum_{x_1, x_2\in L^{k-1}}\sum_{j_1, j_2=1}^{2^n}\abs{w'_{x_1, x_2, j_1, j_2}(t)}\\
    &=\sum_{x_1, x_2\in L^{k-1}}\sum_{j_1, j_2=1}^{2^n}\abs{\int_{Q^{x_1, 2^{1-k}}_{j_1}}\int_{Q^{x_2, 2^{1-k}}_{j_2}}\zeta_{\varepsilon}(z-y)d\mathcal{L}^n(z)d (a^-_{12, i})'[t](y)}\\
    &\leq\sum_{x_1, x_2\in L^{k-1}}\sum_{j_1, j_2=1}^{2^n}\int_{Q^{x_2, 2^{1-k}}_{j_2}}\sum_{x^-_{i, \ell}\in Q^{x_1, 2^{1-k}}_{j_1}}\zeta_{\varepsilon}(z-x^-_{i, \ell})\abs{(a^-_{12, i, \ell})'(t)}d\mathcal{L}^n(z)\\
    &=\sum_{x_1\in L^{k-1}}\sum_{j_1=1}^{2^n}\sum_{x^-_{i, \ell}\in Q^{x_1, 2^{1-k}}_{j_1}}\abs{(a^-_{12, i, \ell})'(t)}\int_{\domain}\zeta_{\varepsilon}(z-x^-_{i, \ell})d\mathcal{L}^n(z)\\
    &=\sum_{x_1\in L^{k-1}}\sum_{j_1=1}^{2^n}\sum_{x^-_{i, \ell}\in Q^{x_1, 2^{1-k}}_{j_1}}\abs{(a^-_{12, i, \ell})'(t)}\\
    &\leq \TV{(a^-_{12, i})'[t]},
\end{align*}
     hence we have
    \begin{align}
        \norm{(G^2_{i, k})'}_{L^p(\vecmeas)}
        &\leq (2^{2-k}\sqrt{n}+\abs{v^-_i}+\varepsilon)\Lp{\sum_{x_1, x_2\in L^{k-1}}\sum_{j_1, j_2=1}^{2^n}\abs{w'_{x_1, x_2, j_1, j_2}}}\notag\\
        &\leq(2^{2-k}\sqrt{n}+\abs{v^-_i}+\varepsilon)\norm{(a^-_{12, i})'}_{L^p(\meas)}.\label{eqn: G2' M bound}
    \end{align}
    
    Third, let $\Xi^+_i(x):=x+v^+_i$ for yet another $v^+_i\in \R^n$ with $\abs{v^+_i}<1$ to be determined, and we define $G^3_{i, k}\in W^{1, p}(\vecmeas)$ by
\begin{align*}
    &G^3_{i, k}\\
    &=\sum_{x\in L^{k-1}}\sum_{j=1}^{2^n}\left(
    \max\left(0, P_k(\zeta_{\varepsilon} \ast a^-_{12, i})[\cdot](\{c^{x, 2^{1-k}}_j\})-P_k(\zeta_{\varepsilon} \ast a^+_{23, i})[\cdot](\{c^{x, 2^{1-k}}_j\})\right)[[\overrightarrow{\Xi_i^-(c^{x, 2^{1-k}}_j), 0}]]\right.\\
    &+\left.\max\left(0, P_k(\zeta_{\varepsilon} \ast a^+_{23, i})[\cdot](\{c^{x, 2^{1-k}}_j\})-P_k(\zeta_{\varepsilon} \ast a^-_{12, i})[\cdot](\{c^{x, 2^{1-k}}_j\})\right)[[\overrightarrow{0, \Xi_i^+(c^{x, 2^{1-k}}_j})]]\right).
\end{align*}
    Then
\begin{align}
    \MLp^\tau_p(G^3_{i, k})
    &\leq 4\sqrt{n}\Lp{\sum_{x\in L^{k-1}}\sum_{j=1}^{2^n}\tau\left(\abs{P_k(\zeta_{\varepsilon} \ast a^+_{23, i})[\cdot](\{c^{x, 2^{1-k}}_j\})-P_k(\zeta_{\varepsilon} \ast a^-_{12, i})[\cdot](\{c^{x, 2^{1-k}}_j\})}\right)}\notag\\
    &\leq 4\sqrt{n}\sum_{x\in L^{k-1}}\sum_{j=1}^{2^n}\Lp{\beta\left(\abs{(\zeta_{\varepsilon} \ast  a^+_{23, i})[\cdot](Q^{x, 2^{1-k}}_j)-(\zeta_{\varepsilon} \ast  a^-_{12, i})[\cdot](Q^{x, 2^{1-k}}_j)}\right)}.\label{eqn: convolution M ineq 1}
\end{align}
    Now for a fixed $t\in [0, 1]$, $x\in L^{k-1}$, and $1\leq j\leq 2^n$, we have,
    \begin{align*}
        \int_{Q^{x, 2^{1-k}}_{j}}\zeta_{\varepsilon}(z-y)d\mathcal{L}^n(z)&\leq 1,\\
        \abs{d_y\int_{Q^{x, 2^{1-k}}_{j}}\zeta_{\varepsilon}(z-y)d\mathcal{L}^n(z)}&\leq \varepsilon^{-1}\int_{\domain} \abs{d\zeta}d\mathcal{L}^n=:C_\varepsilon.
    \end{align*}
Thus we may use Fubini's theorem to write
\begin{align*}
    &\abs{(\zeta_{\varepsilon} \ast  a^+_{23, i})[t](Q^{x, 2^{1-k}}_j)-(\zeta_{\varepsilon} \ast  a^-_{12, i})[t](Q^{x, 2^{1-k}}_j)}\\
    &=\abs{\int_{\domain}\int_{Q^{x, 2^{1-k}}_j}\zeta_{\varepsilon}(z-y)d\mathcal{L}^n(z)d( a^+_{23, i}[t]-a^-_{12, i}[t])(y)}\\
    &\leq C_\varepsilon\Fcur(a^+_{23, i}[t]-a^-_{12, i}[t]),
\end{align*}
 passing to a subsequence, the last expression above converges to zero as $i\to\infty$ for a.e. $t\in [0, 1]$. Because $\beta$ is concave and finite valued, it is also continuous, hence we obtain that $\beta\left(C_\varepsilon\Fcur(a^+_{23, i}[t]-a^-_{12, i}[t])\right)\to 0$ for each $t\in [0, 1]$ as $i\to\infty$. At the same time,
\begin{align*}
    &\beta\left(C_\varepsilon\Fcur(a^+_{23, i}[t]-a^-_{12, i}[t])\right)
     \leq \beta(C_\varepsilon\TV{a^+_{23, i}[t]-a^-_{12, i}[t]})
        \leq \beta(2C_\varepsilon),
\end{align*}
since the first expression above is zero for $t$ outside of a compact interval, if $1<p<\infty$ we can apply dominated convergence to~\eqref{eqn: convolution M ineq 1} to find
\begin{align}\label{eqn: G3 M bound}
    \liminf_{i\to\infty}\MLp^\tau_p(G^3_{i, k})=0.
\end{align}
Noting that $\beta$ must be increasing, we have $\norm{\beta\left(C_\varepsilon\Fcur(a^+_{23, i}-a^-_{12, i})\right)}_{L^\infty}=\beta\left(C_\varepsilon\norm{\Fcur(a^+_{23, i}-a^-_{12, i})}_{L^\infty}\right)$, hence~\eqref{eqn: G3 M bound} also holds when $p=\infty$. 
We also find
    \begin{align}
        &\norm{(G^3_{i, k})'}_{L^p(\vecmeas)}\notag\\
        &\leq 2\sqrt{n}\sum_{x\in L^{k-1}}\sum_{j=1}^{2^n}\Lp{\left(P^k(\zeta_{\varepsilon}\ast  a^-_{12, i})[\cdot](\{c^{x, 2^{1-k}}_j\})\right)'
        -\left(P^k(\zeta_{\varepsilon}\ast  a^+_{23, i})[\cdot](\{c^{x, 2^{1-k}}_j\})\right)'}\notag\\
        &= 2\sqrt{n}\sum_{x\in L^{k-1}}\sum_{j=1}^{2^n}\Lp{\int_{\domain}\left(\int_{Q^{x, 2^{1-k}}_{j}}\zeta_{\varepsilon}(z-y)d\mathcal{L}^n(z)\right)d \left((a^-_{12, i})'[\cdot]-(a^+_{23, i})'[\cdot]\right)(y)}\notag\\
        &\leq 2\sqrt{n}C_\varepsilon\sum_{x\in L^{k-1}}\sum_{j=1}^{2^n}\Lp{\Fcur((a^-_{12, i})'[\cdot]-(a^+_{23, i})'[\cdot])}
        \to 0\label{eqn: G3' M bound}
    \end{align}
    as $i\to\infty$.
   We can construct $G^4_{i, k}\in \WDPath((\Xi^+_i)_\sharp P^k(\zeta_{\varepsilon}\ast a^+_{23, i}), P^k(a^+_{23, i}))$ as we constructed $G^2_{i, k}$ replacing $P^k(a^-_{12, i})$ and $(\Xi^-_i)_\sharp P^k(\zeta_{\varepsilon}\ast a^-_{12, i})$ by $P^k(a^+_{23, i})$ and $(\Xi^+_i)_\sharp P^k(\zeta_{\varepsilon}\ast a^+_{23, i})$ respectively, and then reversing the direction of the edges. Similarly, let $G^5_{i, k}\in \WDPath(P^k(a^+_{23, i}), a^+_{23, i})$ be obtained from the same construction as $G^1_{i, k}$ replacing $a^-_{12, i}$ and $P^k(a^-_{12, i})$ by $a^+_{23, i}$ and $P^k(a^+_{23, i})$ respectively, then reversing all edges. Now we define 
    \begin{align*}
        G_{i, k}:=G_{12, i}+G^1_{i, k}+G^2_{i, k}+G^3_{i, k}+G^4_{i, k}+G^5_{i, k}+G_{23, i},
    \end{align*}
    we can see that  $G_{i, k}\in \WDPath(a^+_{12, i}, a^-_{23, i})$. We note that we may make choices of $v^\pm_i$ with arbitrarily small norm for which the sets $V(G_{12, i})$, $V(G_{23, i})$, $\{\Xi^\pm_i(c^{x, 2^{1-k}}_j)\}_{x\in L^{k-1}, j=1,\ldots, 2^n}$, and $\{c^{x, 2^{1-k}}_j\}_{x\in L^{k-1}, j=1,\ldots, 2^n}$ are all mutually disjoint; in particular
    $G_{i, k}$ is never cyclic, thus by Lemma~\ref{lem: eliminating cycles}~\eqref{eqn: never cyclic energy}, we have
    \begin{align*}
        \DELp^{\tau, p}_\lambda(G_{i, k})\leq \DELp^{\tau, p}_\lambda(G_{12, i})+\DELp^{\tau, p}_\lambda(G_{23, i})+\sum_{\ell=1}^5(\MLp^\tau_p(G^\ell_{i, k})+\lambda\norm{(G^\ell_{i, k})'}_{L^p(\vecmeas)}).
    \end{align*}
    Thus combining with \eqref{eqn: G1 M bound}, \eqref{eqn: G1' bound}, \eqref{eqn: G2 M bound}, \eqref{eqn: G2' M bound}, we find
    \begin{align*}
        \DELp^{\tau, p}_\lambda(G_{i, k})
        &\leq \DELp^{\tau, p}_\lambda(G_{12, i})
        +\DELp^{\tau, p}_\lambda(G_{23, i})\\
        &+2^{1-\ell_{i, k}}\sqrt{n}\tau(1)((\sharp\spt a^-_{12, i})+(\sharp\spt a^+_{23, i}))+2\sqrt{n}\sum_{j=k}^\infty 2^{j(n-1)}\beta(2^{-jn})\\
        &+\lambda\sqrt{n}(\norm{(a^-_{12, i})'}_{L^p(\meas)}+\norm{(a^+_{23, i})'}_{L^p(\meas)})\left(2^{1-\ell_{i, k}}+\sum_{j=k}^\infty 2^{-j}\right)\\
        &+4^n(2^{3-k}\sqrt{n}+\abs{v^-_i}+\abs{v^+_i}+2\varepsilon)2^{2kn}\beta(2^{-2kn})\\
        &+\lambda(2^{3-k}\sqrt{n}+\abs{v^-_i}+\abs{v^+_i}+2\varepsilon)(\norm{(a^-_{12, i})'}_{L^p(\meas)}+\norm{(a^+_{12, i})'}_{L^p(\meas)})\\
        &+\MLp^\tau_p(G^3_{i, k})+\lambda \norm{(G^3_{i, k})'}_{L^p(\vecmeas)}.
    \end{align*}
    For each $i$ and $k$, we take $\ell_{i, k}>k$ and also large enough that $2^{1-\ell_{i, k}}\sqrt{n}\tau(1)((\sharp\spt a^-_{12, i})+(\sharp\spt a^+_{23, i}))<2^{-i}$. 
    Since the above is bounded uniformly in $i\in \N$ and $G_{i, k}$ is never cyclic, we may apply  Proposition~\ref{prop: compactness result} to pass to a subsequence and assume $(G_{i, k}, a^+_{12, i}, a^-_{23, i})\approxto (T, \mu_1, \mu_3)$ as $i\to\infty$ for some $T\in \WStarPath(\mu_1, \mu_3)$. For each $i\in \N$, we may choose $\abs{v^\pm_i}$ small enough that $\abs{v^\pm_i}<2^{-i}$, then taking $i\to\infty$ and recalling~\eqref{eqn: G3 M bound}, and~\eqref{eqn: G3' M bound} yields
    \begin{align*}
        \dist^{\tau, p}_\lambda(\mu_1, \mu_3)
        &\leq \liminf_{i\to\infty}\DELp^{\tau, p}_\lambda(G_{i, k})
        \leq \dist^{\tau, p}_\lambda(\mu_1, \mu_2)
        +\dist^{\tau, p}_\lambda(\mu_2, \mu_3)\\
        &+2\sqrt{n}\sum_{j=k}^\infty 2^{j(n-1)}\beta(2^{-jn})
        +2\lambda\sqrt{n}\norm{\overstar{\mu}_2'}_{L^{p'}(C_0(\domain))^\ast}\sum_{j=k}^\infty 2^{-j}\\
        &+4^n(2^{3-k}\sqrt{n}+2\varepsilon)2^{2kn}\beta(2^{-2kn})
        +\lambda(2^{3-k}\sqrt{n}+2\varepsilon)\norm{\overstar{\mu}_2'}_{L^{p'}(C_0(\domain))^\ast},
    \end{align*}
    then taking $\varepsilon\to 0$, followed by $k\to\infty$ and recalling \cite[Lemma 2.15]{BrancoliniWirth18} implies all but the first two terms on the right hand side of the above inequality converge to zero, thus the triangle inequality holds.
\end{proof}
Finally, we can also say what kind of convergence the metric $\dist^{\tau, p}_\lambda$ metrizes.

\begin{thm}
    Let $\{\mu_j\}_{j\in \N}\cup \{\mu\}\subset \overstar{W}^{1, p}(\prob(\domain))$, with $\sup_{j\in \N}\Lp{\TV{\nu_j}}<\infty$ where $\nu_j$, $\nu$ are from Remark~\ref{rem: weak derivative remark} corresponding to $\overstar{\mu}'_j$, $\overstar{\mu}'$. Then 
    \begin{align*}
        \lim_{j\to \infty} &\dist^{\tau, p}_\lambda(\mu_j, \mu)=0
        \iff \mu_j\Lpflatto \mu.
    \end{align*}
\end{thm}
\begin{proof}
     Suppose that $\mu_j\Lpflatto \mu$. Given any subsequence, we may pass to a further subsequence to assume that $\mu_j[t]\flatto\mu[t]$ for a.e. $t\in [0, 1]$. For a fixed $i\in \N$, we can define $a^+_{i,j}:=P^i(\mu_j)$ and $a^-_i:=P^i(\mu)$, then $a^+_{i,j}\Lpflatto\mu_j$ and $a^-_i\Lpflatto\mu$ as $i\to\infty$ by Lemma~\ref{lem: Pk approximates}. We also pass to subsequences so that $a^+_{i,j}[t]\flatto\mu_j[t]$ and $a^-_i[t]\flatto\mu[t]$ for a.e $t\in [0, 1]$. Then for any $k\in \N$ with $k\geq j$, we can construct the analogues of the paths $G^\ell_{i, k}$ for $\ell=1, \ldots, 5$ from the proof of Theorem~\ref{thm: metric}, with $a^+_{i, j}$ and $a^-_i$ in place of $a^-_{12, i}$ and $a^+_{23, i}$ there, let us call these paths $G^\ell_{i, j, k}$  and write $G_{i, j, k}:=\sum_{\ell=1}^5G^\ell_{i, j, k}$. Again by Proposition~\ref{prop: compactness result} we can pass to a subsequence and assume $(G_{i, j, k}, a^+_{i, j}, a^-_i)\approxto (T_{j, k}, \mu_j, \mu)$ as $i\to\infty$ for some $T_{j, k}\in \WStarPath(\mu_j, \mu)$. Then recalling \eqref{eqn: G1 M bound}, \eqref{eqn: G1' bound}, \eqref{eqn: G2 M bound}, and \eqref{eqn: G2' M bound}, after taking $i\to\infty$ we have
     \begin{align*}
         \dist^{\tau, p}_\lambda(\mu_j, \mu)
         &\leq 2\sqrt{n}\sum_{j'=k}^\infty 2^{j'(n-1)}\beta(2^{-j'n})
         +4^n(2^{3-k}\sqrt{n}+2\varepsilon)2^{2kn}\beta(2^{-2kn})\\
        &+\lambda(\sqrt{n}\sum_{j'=k}^\infty 2^{-j'}+2^{3-k}\sqrt{n}+2\varepsilon)\sup_{j'\in \N}(\norm{\overstar{\mu}'_{j'}}_{L^p(\meas)}+\norm{\overstar{\mu}'}_{L^p(\meas)})\\
        &+\liminf_{i\to\infty}(\MLp^\tau_p(G^3_{i, j, k})+\lambda \norm{(G^3_{i, j, k})'}_{L^p(\vecmeas)}).
     \end{align*}
     Again as in the proofs of~\eqref{eqn: G3 M bound} and \eqref{eqn: G3' M bound}, we have the estimates
     \begin{align*}
         \liminf_{i\to\infty}\MLp^\tau_p(G^3_{i, j, k})
         &\leq 4\sqrt{n}\sum_{x\in L^{k-1}}\sum_{j'=1}^{2^n}\lim_{i\to\infty}\Lp{\beta\left(C_\varepsilon\Fcur(a^+_{i, j}-a^-_i)\right)}\\
         &= 4\sqrt{n}\sum_{x\in L^{k-1}}\sum_{j'=1}^{2^n}\Lp{\beta\left(C_\varepsilon\Fcur(\mu_j-\mu)\right)},\qquad 1<p<\infty,\\
         \liminf_{i\to\infty}\MLp^\tau_\infty(G^3_{i, j, k})
         &\leq 4\sqrt{n}\sum_{x\in L^{k-1}}\sum_{j'=1}^{2^n}\lim_{i\to\infty}\beta\left(C_\varepsilon\norm{\Fcur(a^+_{i, j}-a^-_i)}_{L^\infty}\right)\\
         &=4\sqrt{n}\sum_{x\in L^{k-1}}\sum_{j'=1}^{2^n}\beta\left(C_\varepsilon\norm{\Fcur(\mu_j-\mu)}_{L^\infty}\right)
     \end{align*}
     and  
     \begin{align*}
         \liminf_{i\to\infty}\norm{(G^3_{i, j, k})'}_{L^p(\vecmeas)}
         &\leq 2\sqrt{n}\sum_{x\in L^{k-1}}\sum_{j'=1}^{2^n}\lim_{i\to\infty}\Lp{\Fcur((a^+_{i, j})'-(a^-_i)')}\\
         &=2\sqrt{n}\sum_{x\in L^{k-1}}\sum_{j'=1}^{2^n}\Lp{\Fcur(\nu_j-\nu)}.
     \end{align*}
     Thus taking $j\to\infty$ (using dominated convergence when $1<p<\infty$), then $\varepsilon\to 0$, and finally $k\to\infty$ shows that $\dist^{\tau, p}_\lambda(\mu_j, \mu)$ converges to $0$ as $j\to\infty$. Since this holds for a subsequence of any subsequence of the original $\mu_j$, we see the whole sequence converges to $\mu$ in $\dist^{\tau, p}_\lambda$. 
     
     Since the other direction of the implication is immediate from Lemma~\ref{lem: Lid and branched}, this finishes the proof of the theorem. 
\end{proof}
\subsection*{Acknowledgements}
JK was supported in part by National Science Foundation grant DMS-2246606.
\bibliography{transportpaths}
\bibliographystyle{plain}
\end{document}